\documentclass[11pt]{article}
\pdfoutput=1 


\usepackage[english]{babel}

\usepackage{geometry}
\geometry{
  a4paper,
  includeheadfoot,
  hmargin = 2.6cm,
  vmargin = 2cm
}

\usepackage{amsmath,bm}
\numberwithin{equation}{section} 

\usepackage[margin = 1em]{subfig}

\usepackage{enumitem}

\usepackage{graphicx} \graphicspath{{./}{Figures/}}

\usepackage{amsthm}

\newtheoremstyle{italic}
{5pt}
{5pt}
{\itshape}
{}
{}
{}
{.5em}
{\bfseries{\thmname{#1}~\thmnumber{#2}.}\thmnote{~\textnormal{(#3)}}}

\newtheoremstyle{italic_borrowed}
{5pt}
{5pt}
{\itshape}
{}
{}
{}
{.5em}
{\bfseries{\thmname{#1}~\thmnumber{#2}.}\thmnote{~\textnormal{[#3]}}}

\newtheoremstyle{upright}
{5pt}
{5pt}
{\upshape}
{}
{\bfseries}
{}
{.5em}
{\bfseries{\thmname{#1}~\thmnumber{#2}.}\thmnote{~\textnormal{(\textit{#3}\textrm{)}}}}

\theoremstyle{italic}
\newtheorem{theorem}{Theorem}[section]
\newtheorem{lemma}[theorem]{Lemma}
\newtheorem{proposition}[theorem]{Proposition}
\newtheorem{corollary}[theorem]{Corollary}

\theoremstyle{italic_borrowed}

\theoremstyle{upright}

\newtheorem{remark}[theorem]{Remark}




\usepackage{dsfont}
\usepackage{amsfonts}

\usepackage{xparse}

\usepackage{mathtools}

\newcommand\blfootnote[1]{%
  \begingroup%
  \renewcommand\thefootnote{}\footnote{#1}%
  \addtocounter{footnote}{-1}%
  \endgroup%
}%

\newcommand{\unit}{u}
\newcommand{\defi}{\coloneqq}
\newcommand{\ifed}{\eqqcolon}


\newcommand{\modu}[1]{\operatorname{mod}(#1)} 

\newcommand{\dinf}{\textup{d}} 

\newcommand{\Nat}{\mathbb{N}} 
\newcommand{\Int}{\mathbb{Z}} 
\newcommand{\Real}{\mathbb{R}} 
\newcommand{\Complex}{\mathbb{C}} 

\NewDocumentCommand \E { m g }{%
    \IfNoValueTF{#2}
        {\mathbb{E}[#1]}
        {\mathbb{E}_{#1}[#2]} 
    }%
\NewDocumentCommand \Esc { m g }{%
    \IfNoValueTF{#2}
        {\mathbb{E}\left[#1\right]}
        {\mathbb{E}_{#1}\!\!\left[#2\right]} 
    }%
\NewDocumentCommand \Efxd { m g }{%
    \IfNoValueTF{#2}
        {\mathbb{E}\Bigl[#1\Bigr]}
        {\mathbb{E}_{#1}\!\Bigl[#2\Bigr]} 
    }%
\NewDocumentCommand \Prob { m g }{%
    \IfNoValueTF{#2}
        {\mathbb{P}(#1)}
        {\mathbb{P}_{#1}(#2)} 
    }%
\NewDocumentCommand \Probfxd { m g }{%
    \IfNoValueTF{#2}
        {\mathbb{P}\Bigl(#1\Bigr)}
        {\mathbb{P}_{#1}\!\Bigl(#2\Bigr)} 
    }%


\newcommand{\bld}[1]{\mathbf{#1}} 
\newcommand{\oneb}{\bld{1}} 
\NewDocumentCommand \eb { m g }{%
    \IfNoValueTF{#2}
        {\bld{e}_{#1}}
        {\bld{e}^{(#1)}_{#2}} 
    }%
\NewDocumentCommand \zerob { g }{%
    \IfNoValueTF{#1}
        {\bld{0}}
        {\bld{0}^{(#1)}} 
    }%

\newcommand{\la}{\lambda} 
\newcommand{\al}{\alpha}
\newcommand{\be}{\beta}

\newcommand{\fixedlengthrightarrow}[1]{\xrightarrow{\mathmakebox[4.5em]{#1}}}

\newcommand{\A}[1]{A_{#1}} 
\newcommand{\B}[1]{B_{#1}} 
\newcommand{\I}{I} 
\newcommand{\M}[1]{M^{(#1)}} 
\newcommand{\Lo}{L} 

\newcommand{\pb}[1]{\bld{p}(#1)} 
\newcommand{\p}[1]{p(#1)} 
\newcommand{\qb}[1]{\bld{q}(#1)} 
\newcommand{\q}[1]{q(#1)} 

\newcommand{\pbmodi}[1]{\hat{\bld{p}}(#1)} 
\newcommand{\pmodi}[1]{\hat{p}(#1)} 
\newcommand{\qbmodi}[1]{\hat{\bld{q}}(#1)} 
\newcommand{\qmodi}[1]{\hat{q}(#1)} 

\newcommand{\Dpos}[1]{D_{\scriptscriptstyle +}(#1)} 
\newcommand{\Dneg}[1]{D_{\scriptscriptstyle -}(#1)} 

\newcommand{\ibpos}[1]{\bld{i}_{\scriptscriptstyle +}(#1)} 
\newcommand{\ipos}[1]{i_{\scriptscriptstyle +}(#1)} 
\newcommand{\ibneg}[1]{\bld{i}_{\scriptscriptstyle -}(#1)} 
\newcommand{\ineg}[1]{i_{\scriptscriptstyle -}(#1)} 

\newcommand{\Func}[1]{\Psi(#1)} 
\newcommand{\funcparam}{\psi} 
\newcommand{\func}[2]{\funcparam_{\scriptscriptstyle #1}(#2)} 

\newcommand{\hb}{\bld{h}} 
\newcommand{\h}[1]{h(#1)} 

\newcommand{\ch}{\eta} 
\newcommand{\cv}{\nu} 

\newcommand{\Lboundal}{c_1}
\newcommand{\Lboundbe}{c_2}
\newcommand{\Lrootpos}{v}
\newcommand{\Lrootneg}{w}
\newcommand{\Lal}{\Lrootpos_{\scriptscriptstyle -}}
\newcommand{\Lalt}{\Lrootneg_{\scriptscriptstyle -}}
\newcommand{\Lbe}{\Lrootpos_{\scriptscriptstyle +}}
\newcommand{\Lbet}{\Lrootneg_{\scriptscriptstyle +}}
\newcommand{\Lcoeffpos}{\gamma}
\newcommand{\Lcoeffneg}{\theta}
\newcommand{\Lcv}{\Lcoeffpos_{\cv}}
\newcommand{\Lcvt}{\Lcoeffneg_{\cv_{s + 1}}}
\newcommand{\Lchpos}{\Lcoeffpos_{\ch}}
\newcommand{\Lchtpos}{\Lcoeffpos_{\ch_{s + 1}}}
\newcommand{\Lchneg}{\Lcoeffneg_{\ch}}
\newcommand{\Lchtneg}{\Lcoeffneg_{\ch_{s + 1}}}




\DeclareRobustCommand{\patternNE}{\includegraphics{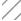}}
\DeclareRobustCommand{\patternhor}{\includegraphics{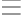}}
\DeclareRobustCommand{\patternver}{\includegraphics{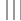}}


\title{Steady-state analysis of shortest expected delay routing}%
\author{Jori Selen\footnotemark[1] \footnotemark[2], Ivo J.B.F. Adan\footnotemark[1] \footnotemark[2] \footnotemark[3], Stella Kapodistria\footnotemark[2], Johan S.H. van Leeuwaarden\footnotemark[2]}%

\begin{document}%

\maketitle%

\renewcommand{\thefootnote}{\fnsymbol{footnote}}%
\footnotetext[1]{Department of Mechanical Engineering, Eindhoven University of Technology}%
\footnotetext[2]{Department of Mathematics and Computer Science, Eindhoven University of Technology}%
\footnotetext[3]{Department of Industrial Engineering \& Innovation Sciences, Eindhoven University of Technology}%
\blfootnote{E-mail address: {\tt j.selen@tue.nl}}%
\renewcommand{\thefootnote}{\arabic{footnote}} \setcounter{footnote}{0}%

\begin{abstract}%
We consider a queueing system consisting of two non-identical exponential servers, where each server has its own dedicated queue and serves the customers in that queue FCFS. Customers arrive according to a Poisson process and join the queue promising the shortest expected delay, which is a natural and near-optimal policy for systems with non-identical servers. This system can be modeled as an inhomogeneous random walk in the quadrant. By stretching the boundaries of the \textit{compensation approach} we prove that the equilibrium distribution of this random walk can be expressed as a series of product-forms that can be determined recursively. The resulting series expression is directly amenable for numerical calculations and it also provides insight in the asymptotic behavior of the equilibrium probabilities as one of the state coordinates tends to infinity.
\end{abstract}%


\section{Introduction}%
\label{sec:introduction}%

In this paper we analyze the performance of a system with two servers under the shortest expected delay (SED) routing policy. This routing policy assigns an arriving customer to the queue that has the shortest expected delay (sojourn time), where delay refers to the waiting time plus the service time. This policy arises naturally in various application areas, and poses considerable mathematical challenges.

In particular, we focus on a queueing system with two servers, where each server has its own queue with unlimited buffer capacity. All service times are independent and exponentially distributed, but the two servers have different service rates, i.e.~respectively 1 and $s$. In both queues customers are served in FCFS-order. Customers arrive according to a Poisson process with rate $\la$ and upon arrival join one of the two queues according to the following mechanism: Let $q_1$ and $q_2$ be the number of customers in queue 1 and 2, respectively, including a possible customer in service. For an arriving customer, the expected delay in queue 1 is $q_1 + 1$ and in queue 2 is $(q_2 + 1)/s$. The SED routing policy assigns an arriving customer to queue 1 if $q_1 + 1 < (q_2 + 1)/s$ and to queue 2 if $q_1 + 1 > (q_2 + 1)/s$. In case the expected delays in both queues are equal, i.e.~$q_1 + 1 = (q_2 + 1)/s$, the arriving customer joins queue 1 with probability $q$ and queue 2 with probability $1 - q$. Once a customer has joined a queue, no switching or jockeying is allowed. The service times are assumed independent of the arrival process and the customer decisions. This system is stable if and only if, see e.g.~\cite[Theorem~1]{AllocationPolicies_Coombs2003},
\begin{equation}%
\rho \defi \la / (1 + s) < 1. \label{eqn:stability_condition}
\end{equation}%
We refer to this specific queueing system as the SED system. The SED system can be modeled as a two-dimensional Markov process on the states $(q_1,q_2)$ with the transition rate diagram as in Figure~\ref{fig:trd_two-dimensional}. From the  transition rate diagram it is evident that this state description leads to an inhomogeneous random walk in the quadrant, making an exact analysis extremely difficult. The inhomogeneous behavior occurs along the line $s(q_1 + 1) = q_2 + 1$ and it can be expected that the solution structure of the stationary probabilities above and below this line will be different. Moreover, for $s \neq 1$, this line divides the state space in unequal proportions, further increasing the complexity of an exact analysis.

\begin{figure}%
\centering%
\includegraphics{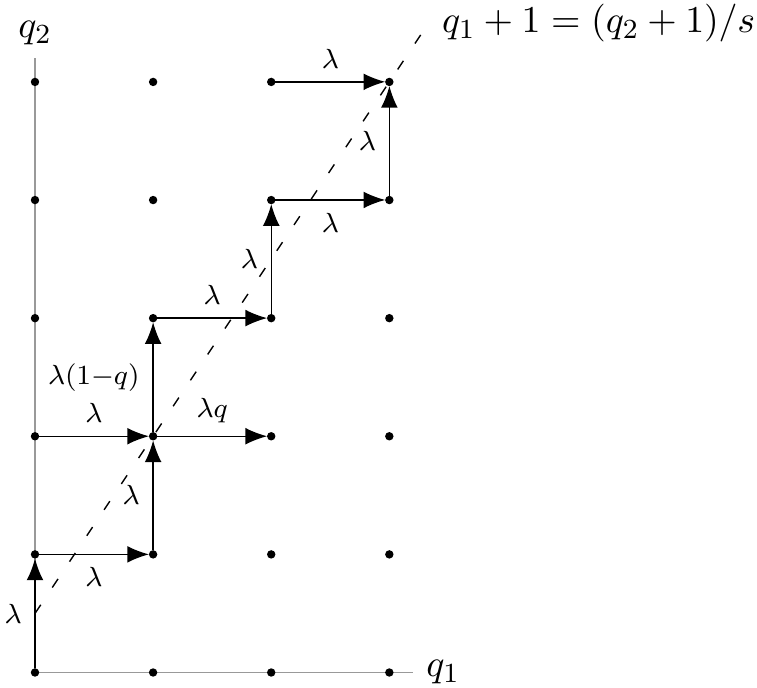} 
\caption{Transition rate diagram of the Markov process on the state space $(q_1,q_2)$ with $s = \frac{3}{2}$. Only transitions corresponding to arrivals that reach or cross the dashed line are shown.}%
\label{fig:trd_two-dimensional}
\end{figure}%

Under the assumption of identical service rates, SED routing becomes join the shortest queue (JSQ) routing which is known to minimize the mean expected delay \cite{DecidingCounterExamples_Whitt1986}. If the service rates of the two servers are different, then JSQ is not optimal. As Whitt \cite{DecidingCounterExamples_Whitt1986} points out, if the system if fully observable and we are a priori knowledgeable of the service times of waiting customers, for example, by looking at the shopping baskets in a supermarket, then the natural choice is to join the queue promising the smallest sojourn time in the system, instead of the shorter of the queues. However, this type of information is too detailed and might not always be available. In particular, there are many situations in which we have limited knowledge of the service times of the waiting customers. Such systems include the teller waiting lines, production facilities, communication networks, etc. If the only available information is that of the number of waiting customers at each queue, then it is quite natural, although not necessarily optimal, to choose the shortest queue routing \cite{CompApproachSymmetric_Adan1990,CompApproachAsymmetric_Adan1991,BadLuck_Blanc2009,AsymmetricalShortest_Cohen1998,TwoParallelQueues_Flatto1977,JSQ_Stability_Foley2001, JSQ_WebServerFarms_Gupta2007,ShortestQueue_Halfin1985,TwoParallelQueues_Kingman1961,GeometricDecay_Li2007,InfiniteServer_Yao2005,JSQ_ErlangLoss_Yao2008}. However, if on top of the number of waiting customers we can estimate the expected service times at the two queues, then the natural choice is to route the customers according to SED, rather than JSQ.

SED routing seems to be a natural choice, but in practice this choice is not always optimal, since it does not minimize the mean stationary delay \cite{DynamicRouting_Ephremides1980,DecidingCounterExamples_Whitt1986}. For the two non-identical server setting, Hajek~\cite{TwoInteractingServiceStations_Hajek1984} solves a Markov decision problem and proves that the optimal routing policy is of the threshold-type. However, SED routing exhibits relatively good performance at both ends of the range of system utilizations, but performs slightly worse than other policies at medium utilizations, see \cite{LoadSharing_Banawan1992}. Furthermore, Foschini~\cite{SEDR_HeavyTraffic_Foschini1977} has shown that SED routing is asymptotically optimal and results in complete resource pooling in the heavy traffic limit.

State-dependent routing policies such as JSQ and SED are typically difficult to analyze. For the stationary behavior of queueing systems with SED routing very little is known. The difficulty in analyzing this type of models is evident from the analysis of the JSQ policy for two identical parallel exponential servers \cite{TwoParallelQueues_Haight1958}. The first major steps towards its analysis were made in \cite{TwoParallelQueues_Flatto1977,TwoParallelQueues_Kingman1961}, using a uniformization approach that established that the generating function of the equilibrium distribution is meromorphic. Thus, a partial fraction expansion of the generating function of the joint stationary queue length distribution would in principle lead to a representation of the equilibrium distribution as an infinite linear combination of product forms. For an extensive treatment of the JSQ system using a generating function approach, the interested readers is referred to \cite{BoundaryValueProblemsQueueing_Cohen2000,RandomWalksQP_Fayolle1999}. An alternative approach that is not based on generating functions is the \textit{compensation approach} \cite{CompApproachSymmetric_Adan1990}. This approach directly solves the equilibrium equations and leads to an explicit solution. The essence of the approach is to first characterize the product forms satisfying the equilibrium equations for states in the inner region of the two-dimensional state space and then to use the product forms in this set to construct a linear combination of product forms which also satisfies the boundary conditions. The construction is based on a compensation argument: after introducing the starting product form, new product forms are added so as to alternately compensate for the errors on the two boundaries.

The compensation approach has been developed in a series of papers \cite{CompApproachMultipleWaitingLines_Adan2001,CompApproachSymmetric_Adan1990,CompApproachAsymmetric_Adan1991,CompApproachTwoDimensional_Adan1993} and aims at a direct solution for a class of two-dimensional random walks on the lattice of the first quadrant that obey the following conditions:
\begin{enumerate}[label = \textup{(\roman*)}]%
\item Step size: only transitions to neighboring states.
\item Forbidden steps: no transitions from interior states to the North, North-East, and East.
\item Homogeneity: the same transitions occur according to the same rates for all interior points, and similarly for all points on the horizontal boundary, and for all points on the vertical boundary.
\end{enumerate}%
The approach exploits the fact that the balance equations in the interior of the quarter plane are satisfied by linear (finite or infinite) combinations of product forms, that need to be chosen such that the equilibrium equations on the boundaries are satisfied as well. As it turns out, this can be done by alternatingly compensating for the errors on the two boundaries, which eventually leads to an infinite series of product forms. The SED queueing system in this paper is more complicated than the JSQ and other classical queueing systems, see e.g.~\cite{CompApproachMultipleWaitingLines_Adan2001,CompApproachErlangServers_Adan1996,CompApproachSymmetric_Adan1990,CompApproachAsymmetric_Adan1991,CompApproachTwoDimensional_Adan1993}, since the two-dimensional random walk that describes the SED system exhibits \textit{inhomogeneous} behavior in the interior of the quadrant. In this paper, we show that the compensation approach can nevertheless be further developed to overcome the obstacles caused by the inhomogeneous behavior of the random walk. This leads to a solution for the stationary distribution in the form of a \textit{tree of product forms}.

The only other work in this direction is \cite{CompApproachErlangServers_Adan1996}, which considers SED routing for two identical parallel servers with Poisson arrivals and Erlang distributed service times. The crucial difference with our setting is that we do not focus on generalizing service times, but instead consider servers with different service rates.

The remainder of the paper is organized as follows. In Section~\ref{sec:model_description} we introduce the model in detail and describe the equilibrium equations. We discuss the compensation approach and its methodological extensions together with our contribution in Section~\ref{sec:stretching_boundaries_compensation_approach}. Some numerical results are presented in Section~\ref{sec:numerical_results}. Section~\ref{sec:applying_the_compensation_approach} applies the compensation approach to determine the equilibrium distribution of the SED system as a series of product-form solutions. Finally, we present some conclusions in Section~\ref{sec:conclusion}.


\section{Equilibrium equations}%
\label{sec:model_description}%

The Markov process associated with the SED system has an inhomogeneous behavior in the interior of the quadrant, specifically, along the line $s(q_1 + 1) = q_1 + 1$, see Figure~\ref{fig:trd_two-dimensional}. In this section we transform the two-dimensional state space $(q_1,q_2)$ to a half-plane with a finite third dimension. For this state description, we show that the theoretical framework of the compensation approach can be extended and in this way we determine the equilibrium distribution of the SED system.

We will henceforth assume that $s$ is a positive integer number. The service rate $s$ could also be chosen to be rational and the analysis would be similar, but notationally more difficult. We further elaborate on this point in Remark~\ref{rem:rational_s}.

In queue 2 we count the number of groups of size $s$ and denote it as $j$, i.e. $j = \lfloor q_2/s \rfloor$, and we denote the number of remaining customers as $r$, i.e. $r = \modu{q_2,s}$. Clearly, a single group in queue 2 requires the same expected amount of work as a single customer in queue 1. The total number of customers in queue 2 is thus $js + r$ and for an arriving customer the expected delay in queue 2 is $j + (r + 1)/s$. In terms of these variables, SED routing works as follows: if $q_1 + 1 < j + (r + 1)/s$ the arriving customer joins queue 1 and if $q_1 + 1 > j + (r + 1)/s$ the arriving customer joins queue 2. In case the expected delays in both queues are equal, i.e.~$q_1 + 1 = j + (r + 1)/s$, the arriving customer joins queue 1 with probability $q$ and queue 2 with probability $1 - q$.

For convenience we introduce the length of the shortest queue $m = \min(q_1,j)$ and the difference between queue 2 and queue 1, i.e.~$n = j - q_1$. Using this notation, the SED system is formulated as a three-dimensional Markov process with state space $\{(m,n,r) \mid m \in \Nat_0, ~ n \in \Int, ~ r = 0,1,\ldots,s-1 \}$. Under the stability condition \eqref{eqn:stability_condition} the equilibrium distribution exists. Let $p(m,n,r)$ denote the equilibrium probability of being in state $(m,n,r)$ and let
\begin{equation}%
\pb{m,n} = (\p{m,n,0},\p{m,n,1},\ldots,\p{m,n,s-1})^T
\end{equation}%
with $\bld{x}^T$ the transpose of a vector $\bld{x}$. Throughout the paper, we use bold lowercase letters or numbers for vectors and uppercase Latin letters for matrices. For convenience, we have listed all state variables and their interpretation in Table~\ref{tbl:variables}.

\begin{table}%
\centering%
\begin{tabular}{ccl}%
Variable & Expression & Interpretation \\
\hline
$q_1$ &  & Number of customers (groups of size 1) in queue 1 \\
$q_2$ &  & Number of customers in queue 2 \\
$\lfloor q_2/s \rfloor$ & & Number of groups of size $s$ in queue 2 \\
$m$   & $m = \min(q_1,\lfloor q_2/s \rfloor)$ & Minimum number of groups in queue 1 and 2 \\
$n$   & $n = \lfloor q_2/s \rfloor - q_1$ & Difference between number of groups in queue 2 and 1 \\
$r$   & $r = \modu{q_2,s}$ & Number of customers in queue 2 that are not in a group
\end{tabular}%
\caption{Interpretation of the state variables of the Markov process.}
\label{tbl:variables}%
\end{table}%

The transition rates are given by
\begin{align*}%
(m,n,r) &\fixedlengthrightarrow{(1+s)\rho} \begin{cases}%
(m+1,n-1,r), & m \ge 0, ~ n > 0, ~ r = 0,1,\ldots,s-1, \\
(m,n,r+1), & m \ge 0, ~ n \le 0, ~ r = 0,1,\ldots,s-2, \\
(m+1,n+1,0), & m \ge 0, ~ n < 0, ~ r = s-1,
\end{cases} \\
(m,0,s-1) &\fixedlengthrightarrow{(1+s)\rho(1-q)} (m,1,0), ~ m \ge 0, \\
(m,0,s-1) &\fixedlengthrightarrow{(1+s)\rho q} (m,-1,s-1), ~ m \ge 0,
\intertext{corresponding to arrivals, and}
(m,n,r) &\fixedlengthrightarrow{1} \begin{cases}%
(m-1,n+1,r), & m > 0, ~ n \ge 0, ~ r = 0,1,\ldots,s-1, \\
(m,n+1,r), & m \ge 0, ~ n < 0, ~ r = 0,1,\ldots,s-1,
\end{cases} \\
(m,n,r) &\fixedlengthrightarrow{s} \begin{cases}%
(m,n,r-1), & m \ge 0, ~ n \in \Int, ~ r = 1,2,\ldots,s-1, \\
(m,n-1,s-1), & m \ge 0, ~ n > 0, ~ r = 0, \\
(m-1,n-1,s-1), & m > 0, ~ n \le 0, ~ r = 0,
\end{cases}%
\end{align*}%
corresponding to service completions. Figure~\ref{fig:state_space_trd}(a) displays the transition rate diagram for the three-dimensional state space. The transition rates are described by the matrices $\A{x,y}$ in the positive quadrant and $\B{x,y}$ in the negative quadrant, where the pair $(x,y)$ indicates the step size in the $(m,n)$-direction. Let $\I$ be the $s \times s$ identity matrix, $\M{x,y}$ be an $s \times s$ binary matrix with element $(x,y)$ equal to one and zeros elsewhere, and $\Lo$ an $s \times s$ subdiagonal matrix with elements $(x,x-1), ~ x = 1,2,\ldots,s-1$ equal to one and zeros elsewhere. For consistency with the indexing of the vector $\pb{m,n}$, indexing of a matrix starts at 0. The transition rate matrices take the form
\begin{align*}%
\A{1,-1} &= (1 + s)\rho\I,                 &\A{0,1}  &= (1 + s)\rho(1-q) \M{0,s - 1}, \\
\A{-1,1} &= \B{0,1} = \I,                &\B{1,1}  &= (1 + s)\rho \M{0,s - 1}, \\
\A{0,-1} &= \B{-1,-1} = s\M{s-1,0},      &\B{0,-1} &= (1 + s)\rho q \M{s - 1,s - 1}, \\
\A{0,0}  &= -(1 + s)(\rho + 1)\I + s\Lo^T, &\B{0,0}  &= \A{0,0} + (1 + s)\rho\Lo.
\end{align*}%

\begin{figure}%
\centering%
\subfloat[Three-dimensional $(m,n,r)$ transition rate diagram. Note that the third dimension is perpendicular and gives rise to $s$-layers.]{%
\includegraphics{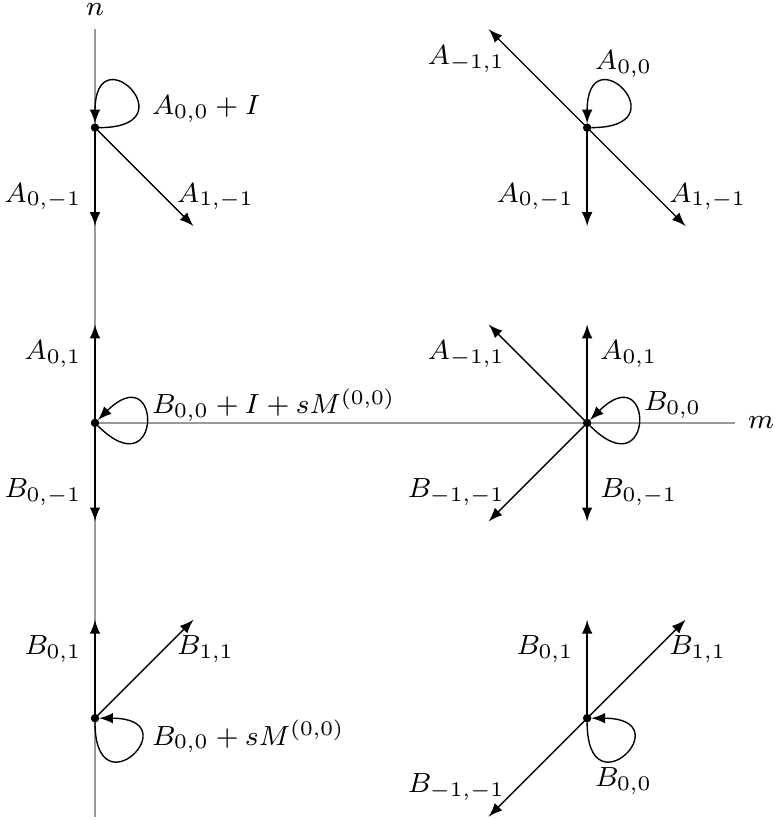}
}%
\subfloat[Partitioning of the state space of the Markov process: interior (or inner) states \patternNE; horizontal states \patternhor; and vertical states \patternver.]{%
\includegraphics{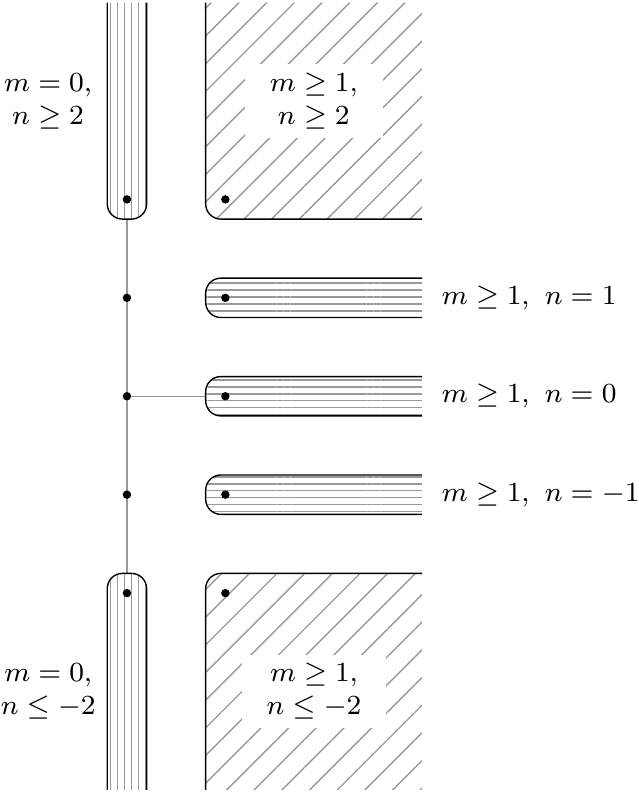}
}%
\caption{Transition rate diagram and state space partitioning.}
\label{fig:state_space_trd}
\end{figure}%

The equilibrium equations can be written in matrix-vector form. We partition the state space as illustrated in Figure~\ref{fig:state_space_trd}(b). For the interior of the positive and negative quadrant we have the following inner equations
\begin{align}%
\A{0,0} \pb{m,n} + \A{1,-1} \pb{m - 1,n + 1} + \A{0,-1} \pb{m,n + 1} \notag \\
+ \A{-1,1} \pb{m + 1,n - 1} &= \zerob, \quad m \ge 1, ~ n \ge 2, \label{eqn:equilibrium_eqs_I+} \\
\B{0,0} \pb{m,n} + \B{1,1} \pb{m - 1,n - 1} + \B{0,1} \pb{m,n - 1} \notag \\
+ \B{-1,-1} \pb{m + 1,n + 1} &= \zerob, \quad m \ge 1, ~ n \le -2. \label{eqn:equilibrium_eqs_I-}
\intertext{The equilibrium equations corresponding to the states on the horizontal axis, or directly adjacent to the horizontal axis, are referred to as the horizontal boundary equations and are given by}
\A{0,0} \pb{m,1} + \A{1,-1} \pb{m - 1,2} + \A{0,-1} \pb{m,2} \notag \\
+ \A{-1,1} \pb{m + 1,0} + \A{0,1} \pb{m,0} &= \zerob, \quad m \ge 1, ~ n = 1, \label{eqn:equilibrium_eqs_H+}\\
\B{0,0} \pb{m,-1} + \B{1,1} \pb{m - 1,-2} + \B{0,1} \pb{m,-2} \notag \\
+ \B{-1,-1} \pb{m + 1,0} + \B{0,-1} \pb{m,0} &= \zerob, \quad m \ge 1, ~ n = -1, \label{eqn:equilibrium_eqs_H-}\\
\B{0,0} \pb{m,0} + \A{1,-1} \pb{m - 1,1} + \B{1,1} \pb{m - 1,-1} \notag \\
+ \A{0,-1} \pb{m,1} + \B{0,1} \pb{m,-1} &= \zerob, \quad m \ge 1, ~ n = 0. \label{eqn:equilibrium_eqs_H}
\intertext{The vertical boundary equations are}
(\A{0,0} + \I)\pb{0,n} + \A{0,-1} \pb{0,n + 1} + \A{-1,1} \pb{1,n - 1} &= \zerob, \quad m = 0, ~ n \ge 2, \label{eqn:equilibrium_eqs_V+} \\
(\B{0,0} + s\M{0,0}) \pb{0,n} + \B{0,1} \pb{0,n - 1} + \B{-1,-1} \pb{1,n + 1} &= \zerob, \quad m = 0, ~ n \le -2. \label{eqn:equilibrium_eqs_V-}
\end{align}%
Finally, for the three remaining boundary states near the origin, we have
\begin{align}%
(\A{0,0} + \I)\pb{0,1} + \A{0,-1}\pb{0,2} + \A{-1,1} \pb{1,0} + \A{0,1} \pb{0,0} &= \zerob, \\
(\B{0,0} + s\M{0,0}) \pb{0,-1} + \B{0,1} \pb{0,-2} + \B{-1,-1} \pb{1,0} + \B{0,-1} \pb{0,0} &= \zerob,\\
(\B{0,0} + \I + s\M{0,0})\pb{0,0} + \A{0,-1} \pb{0,1} + \B{0,1} \pb{0,-1} &= \zerob.
\end{align}%
\begin{remark}[Rational $s$]\label{rem:rational_s}%
A system with a rational service rate $s = \frac{s_2}{s_1}$ can also be analyzed. In that case, one needs to consider a system with two servers and service rates $s_1$ and $s_2$. Similar to our analysis at the start of Section~\ref{sec:model_description}, one denotes the number of groups of size $s_1$ in queue 1 as $i$ and the number of groups of size $s_2$ in queue 2 as $j$. Then, let $r_n \in \{0,1,\ldots,s_n - 1\}$ denote the number of remaining customers in queue $n = 1,2$. Based on the aforementioned construction, a single group in either queue 1 or 2 requires the same expected amount of work. Lastly, set $m = \min(i,j)$, $n = j - i$ and the third finite dimension is a lexicographical ordering of the states $(r_1,r_2) \in \{0,1,\ldots,s_1 - 1\} \times \{0,1,\ldots,s_2 - 1\}$. This state space description leads to a transition rate diagram that has a similar structure as the one seen in Figure~\ref{fig:state_space_trd}(a). In this sense, a system with a rational service rate can be analyzed using the approach described in this paper.
\end{remark}


\section{Evolution of the compensation approach and our contribution}%
\label{sec:stretching_boundaries_compensation_approach}%

In this section we use the abbreviations: vertical boundary (VB); vertical compensation step (VCS); horizontal boundary (HB); and horizontal compensation step (HCS).

The compensation approach is used for the direct determination of the equilibrium distribution of Markov processes that satisfy the three conditions mentioned in Section~\ref{sec:introduction}. The key idea is a compensation procedure: the equilibrium distribution can be represented as a series of product-form solutions, which is generated term by term starting from an initial solution, such that each term compensates for the error introduced by its preceding term on one of the boundaries of the state space. In this section, we motivate why the SED system requires a fundamental extension of the compensation approach. We do so by first describing the evolution of the compensation approach through a series of models and present for each model the corresponding methodological contribution. Finally, we describe the extension required for the SED system.

The compensation approach was pioneered by Adan et al.~\cite{CompApproachSymmetric_Adan1990}, for a queueing system with two identical exponential servers, both with rate 1, and JSQ routing. Such a queueing system can be modeled as a Markov process with states $(q_1,q_2) \in \Nat_0^2$, where $q_i$ is the number of customers at queue $i$, including a customer possibly in service. By defining $m = \min(q_1,q_2)$ and $n = q_2 - q_1$, one transforms the state space from an inhomogeneous random walk in the quadrant to a random walk in the half plane that is homogeneous in each quadrant. Since the two quadrants are mirror images of each other, it is not needed to determine the equilibrium probabilities in both quadrants; it suffices to do so in the positive quadrant. The transition rate diagram of the Markov process is shown in Figure~\ref{fig:JSQ_various_models}(a).

\begin{figure}%
\centering%
\subfloat[Identical servers \cite{CompApproachSymmetric_Adan1990}]{%
\includegraphics{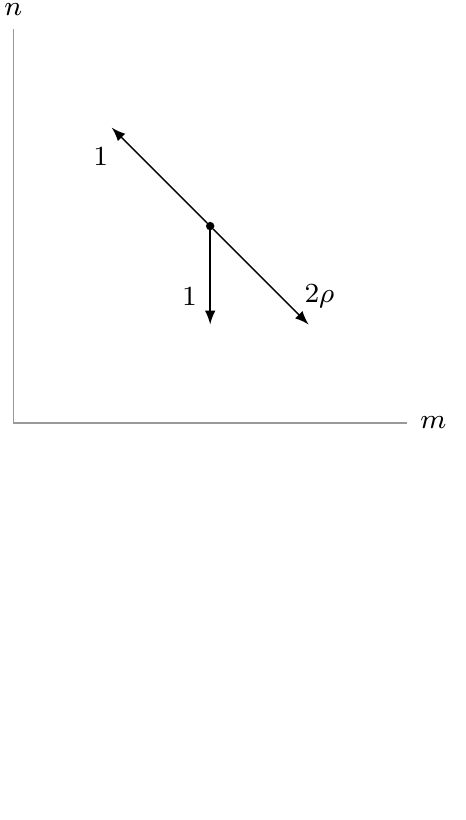}
}%
\subfloat[Non-identical servers \cite{CompApproachAsymmetric_Adan1991}]{%
\includegraphics{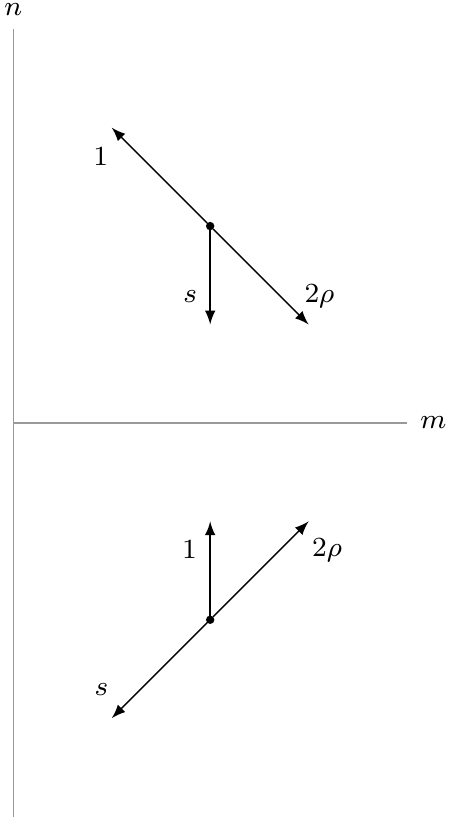}
}%
\subfloat[Identical servers and Erlang arrivals with $s$ phases ($s$-layered transition rate diagram) \cite{CompApproachErlangArrivals_Adan2013}]{%
\includegraphics{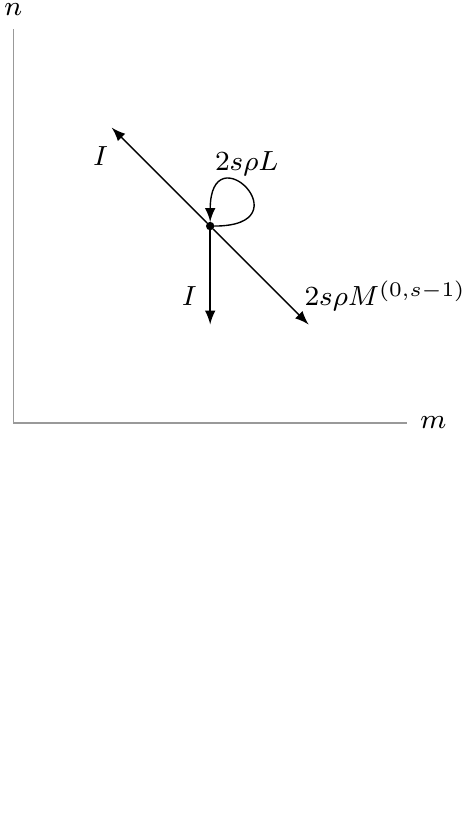}
}%
\caption{Simplified transition rate diagrams on the state space $(m,n)$ for JSQ systems with two servers.}%
\label{fig:JSQ_various_models}%
\end{figure}%

For the symmetric JSQ model in \cite{CompApproachSymmetric_Adan1990}, the initial solution is of the form $\ch_0 \al_0^m \be_0^n$, where $\ch_0$ is a coefficient and $\al_0$ and $\be_0$ are the compensation parameters that satisfy the kernel equation
\begin{equation}%
\al \be 2(\rho + 1) = \be^2 2\rho + \al \be^2 + \al^2, \label{eqn:kernel_equation_JSQ_identical}
\end{equation}%
which is obtained by substituting $\al^m \be^n$ in the equilibrium equations of the interior of the positive quadrant and dividing by common powers. This initial solution satisfies the equilibrium equations in the interior and on the HB (there is only one such solution). In order to compensate for the error on the VB, one adds the compensation term $\cv_0 \al_1^m \be_0^n$ such that $\ch_0 \al_0^m \be_0^n + \cv_0 \al_1^m \be_0^n$ satisfies the equilibrium equations in the interior and on the VB. The compensation parameter $\al_1$ with $\al_1 < \be_0$ is generated from \eqref{eqn:kernel_equation_JSQ_identical} for a fixed $\be = \be_0$. The coefficient $\cv_0$ satisfies a linear equation and is a function of $\ch_0$, $\al_0$, $\be_0$, and $\al_1$. The resulting solution violates the equilibrium equations on the HB. Hence, one adds another compensation term $\ch_1 \al_1^m \be_1^n$ such that $\cv_0 \al_1^m \be_0^n + \ch_1 \al_1^m \be_1^n$ satisfies the equilibrium equations in the interior and on the HB, where $\be_1$ and $\ch_1$ are determined in a similar way as for the VCS. Repeating the compensation steps leads to a series expression for the equilibrium probabilities that satisfies all equilibrium equations:
\begin{equation}%
p(m,n) = C \sum_{l = 0}^\infty \ch_l \al_l^m \be_l^n + C \sum_{l = 0}^\infty \cv_{l + 1} \al_{l + 1}^m \be_l^n, \quad m \ge 0, ~ n \ge 1,
\end{equation}%
where $C$ is the normalization constant. Figure~\ref{fig:alpha_beta_JSQ_identical} displays the way in which the compensation parameters are generated.

\begin{figure}%
\centering%
\includegraphics{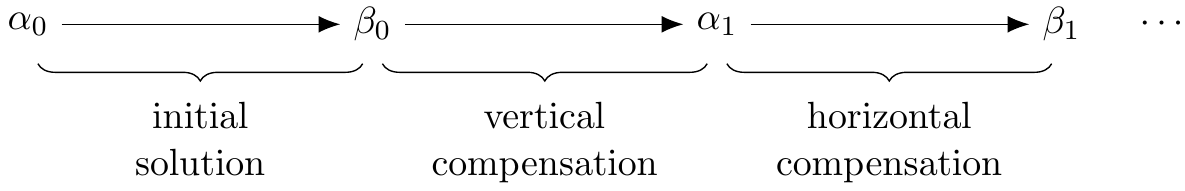}
\caption{For a system with two identical servers and JSQ routing, the compensation approach generates in each compensation step a single compensation term.}%
\label{fig:alpha_beta_JSQ_identical}%
\end{figure}%

The first extension is presented in \cite{CompApproachAsymmetric_Adan1991} for the asymmetric JSQ model, i.e.~the servers are now assumed to be non-identical with speeds 1 and $s$. The symmetry argument used earlier does not hold anymore and one needs to consider the complete half-plane, see Figure~\ref{fig:JSQ_various_models}(b) for the transition rate diagram. Note that the half-plane consists of two quadrants with different transition rates that are coupled on the horizontal axis. The approach in this case is an extension of the approach introduced in \cite{CompApproachSymmetric_Adan1990}. In a VCS, one compensates solutions that satisfy the positive inner equations on the positive VB as well as solutions that satisfy the negative inner equations on the negative VB. Two kernel equations (one for each quadrant) are used to generate the $\al$'s, and the coefficients satisfy different linear equations. For a HCS, each product-form solution that satisfies the positive inner equations, generates a single $\be$ for the positive quadrant and a single $\be$ for the negative quadrant. Accordingly, a product-form solution that satisfies the negative inner equations is compensated on the HB. Thus, in this case the generation of compensation parameters has a \textit{binary tree structure}, see Figure~\ref{fig:alpha_beta_JSQ_non-identical}.

\begin{figure}%
\centering%
\includegraphics{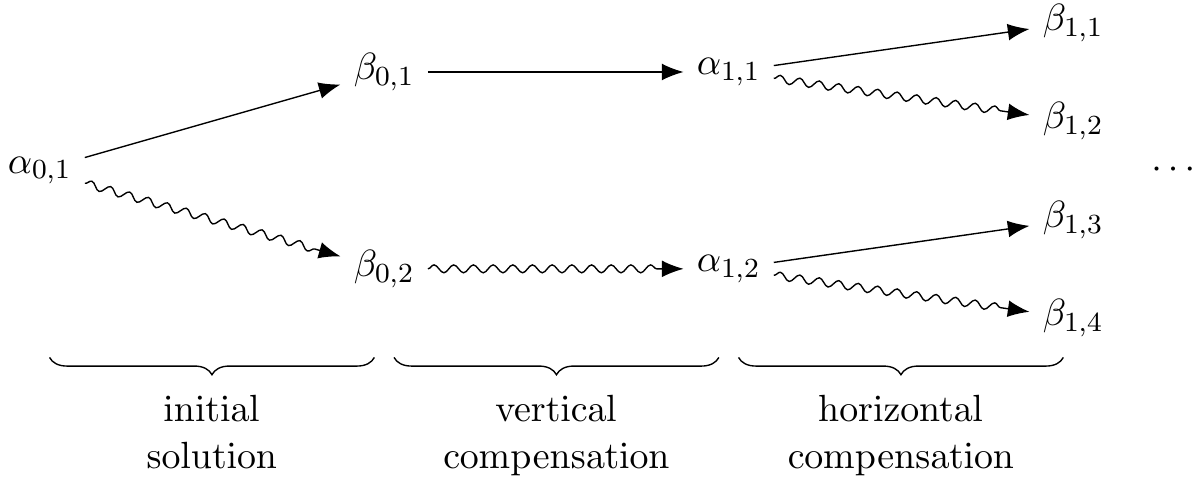}
\caption{For the asymmetric JSQ model, the compensation approach generates different compensation terms for the positive (straight arrow) and the negative quadrant (snaked arrow).}%
\label{fig:alpha_beta_JSQ_non-identical}%
\end{figure}%

A further extension of the compensation approach is presented in \cite{CompApproachErlangArrivals_Adan2013} for a model with two identical servers, Erlang-$s$ arrivals and JSQ routing. The state description is enhanced by adding a finite third dimension that keeps track of the number of completed arrival phases. The random walk in the positive and negative quadrant are mirror images, which permits to perform the analysis only on the positive quadrant, see Figure~\ref{fig:JSQ_various_models}(c) for the transition rate diagram. In \cite{CompApproachErlangArrivals_Adan2013} the authors extend the compensation approach to a three-dimensional setting. Due to the three-dimensional state space, each compensation term takes the form $\al^m \be^n \ibpos{\al,\be}$, where $\ibpos{\al,\be}$ is a vector of coefficients of dimension $s$ (equal to the number of arrival phases). In each HCS, $s$ different parameters $\be$ are generated instead of just one. A graphical representation of the generation of compensation terms, which has an $s$\textit{-fold tree structure}, is depicted in Figure~\ref{fig:alpha_beta_JSQ_Erlang_arrivals}.

\begin{figure}%
\centering%
\includegraphics{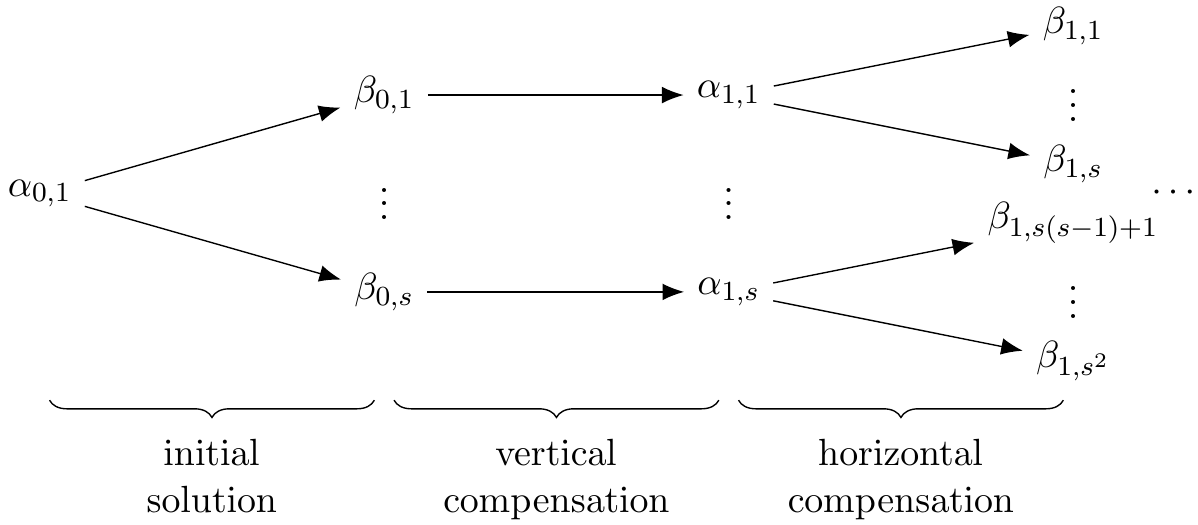}
\caption{For the symmetric JSQ model with Erlang-$s$ arrivals, the compensation approach generates $s$ compensation terms in a HCS and just one compensation term in a VCS.}%
\label{fig:alpha_beta_JSQ_Erlang_arrivals}%
\end{figure}%

\textbf{Our contribution.} The model at hand is defined on an $s$-layered half plane, thus requires that we further extend the compensation approach. Similarly to \cite{CompApproachAsymmetric_Adan1991}, we need to account for the two quadrants by considering two kernel functions (one for each quadrant). Furthermore, in accordance with \cite{CompApproachErlangArrivals_Adan2013}, in every HCS, a total of $s$ different parameters $\be$ are generated for the positive quadrant and a single $\be$ for the negative quadrant. This leads to a $(s + 1)$\textit{-fold tree structure} for the compensation parameters as depicted in Figure~\ref{fig:alpha_beta_SEDR}. Additionally, the product-form solutions take the form $\al^m\be^n\ibpos{\al,\be}$ or $\al^m\be^{-n}\ibneg{\al,\be}$ depending on whether they are defined in the positive or negative quadrant, respectively. The resulting solution for the equilibrium distribution is, for $m \ge 0, ~ n \ge 1$,
\begin{subequations}%
\label{eqn:equilibrium_distribution_introduction}
\begin{align}%
\pb{m,n} &= C \sum_{l = 0}^\infty \sum_{i = 1}^{(s + 1)^l} \sum_{j = 1}^s \ch_{l,d(i) + j} \al_{l,i}^m \be_{l,d(i) + j}^n \ibpos{\al_{l,i},\be_{l,d(i) + j}} \notag \\
&\quad + C \sum_{l = 0}^\infty \sum_{i = 1}^{(s + 1)^l} \sum_{j = 1}^s \cv_{l + 1,d(i) + j} \al_{l + 1,d(i) + j}^m \be_{l,d(i) + j}^n \ibpos{\al_{l + 1,i},\be_{l,d(i) + j}}. \label{eqn:equilibrium_distribution_introduction_pos}
\intertext{For $m \ge 0, ~ n = 0$,}
\pb{m,n} &= C \sum_{l = 0}^\infty \sum_{i = 1}^{(s + 1)^l} \al_{l,i}^m \hb_{l,i}. \label{eqn:equilibrium_distribution_introduction_hor}
\intertext{For $m \ge 0, ~ n \le -1$,}
\pb{m,n} &= C \sum_{l = 0}^\infty \sum_{i = 1}^{(s + 1)^l} \ch_{l,i(s + 1)} \al_{l,i}^m \be_{l,i(s + 1)}^{-n} \ibneg{\al_{l,i},\be_{l,i(s + 1)}} \notag \\
&\quad + C \sum_{l = 0}^\infty \sum_{i = 1}^{(s + 1)^l} \cv_{l + 1,i(s + 1)} \al_{l + 1,i(s + 1)}^m \be_{l,i(s + 1)}^{-n} \ibneg{\al_{l + 1,i(s + 1)},\be_{l,i(s + 1)}}, \label{eqn:equilibrium_distribution_introduction_neg}
\end{align}%
\end{subequations}%
where $C$ is the normalization constant and $d(i) \defi (i - 1)(s + 1)$. The first subscript $l$ is the level at which a parameter resides, starting at level $l = 0$ (the initial solution). Within a level, the parameters are differentiated by using an additional index $i$. A horizontal compensation step and the initial solution has coefficients $\ch$ and a vertical compensation step has coefficients $\cv$. Additionally, a vector $\hb$ is generated in each horizontal compensation step. The initial solution is described in Lemma~\ref{lem:initial_solution}, the horizontal compensation step is described in Lemma~\ref{lem:compensation_horizontal}, and the vertical compensation step is described in Lemma~\ref{lem:compensation_vertical}, see Figure~\ref{fig:alpha_beta_SEDR}.

\begin{figure}%
\centering%
\includegraphics{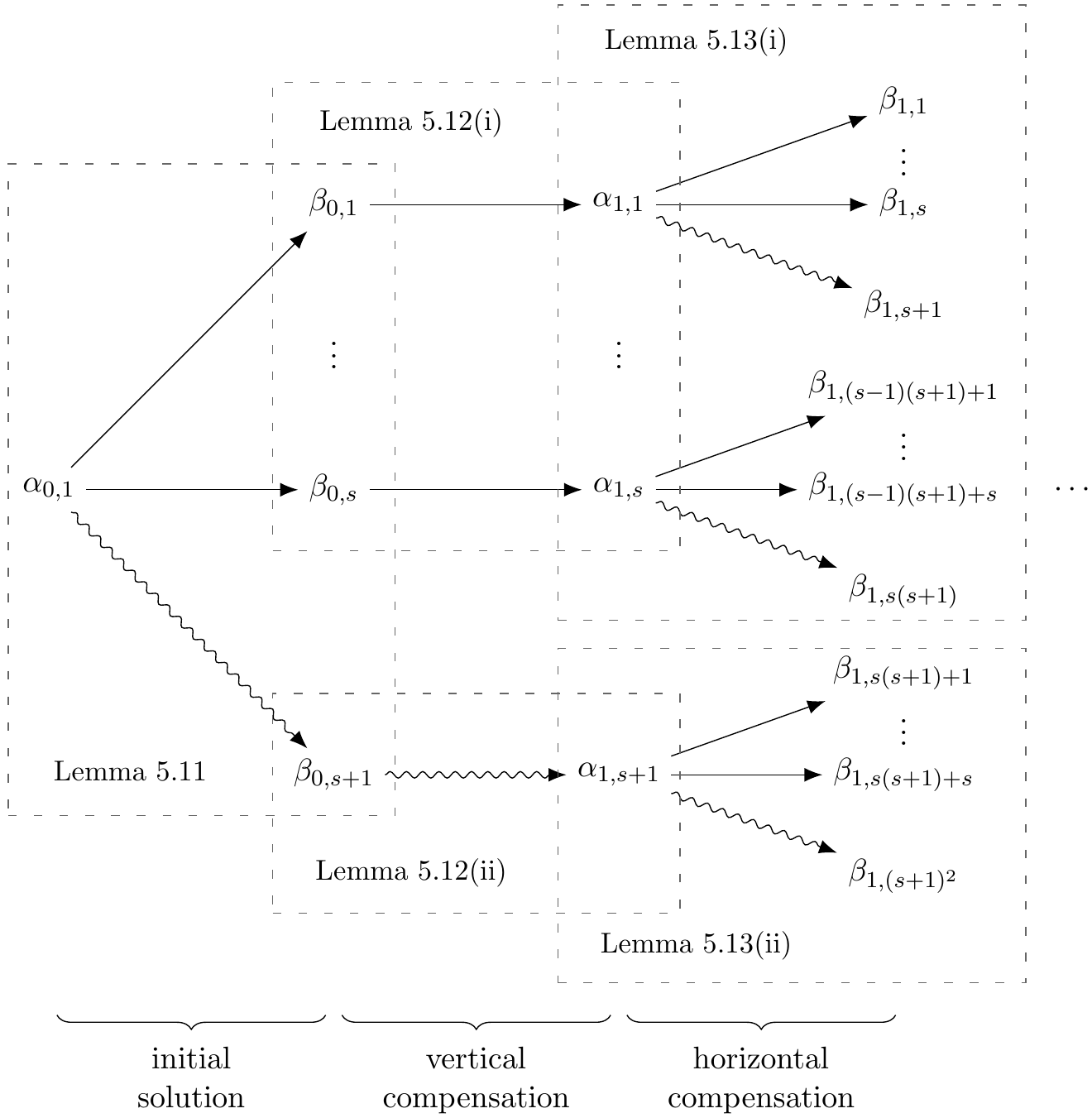}
\caption{For the SED system, the compensation approach generates different compensation terms for the positive (straight arrow) or the negative quadrant (snaked arrow). The number of terms generated in the positive and negative quadrant are different.}%
\label{fig:alpha_beta_SEDR}%
\end{figure}%


\section{Numerical results}%
\label{sec:numerical_results}%

Expression \eqref{eqn:equilibrium_distribution_introduction} is amenable for numerical calculations after applying truncation. For $m \ge 0, ~ n \ge 1$,
\begin{subequations}%
\label{eqn:equilibrium_distribution_numerical}%
\begin{align}%
\bld{p}_L(m,n) &= C \sum_{l = 0}^{\lfloor \frac{L}{2} \rfloor} \sum_{i = 1}^{(s + 1)^l} \sum_{j = 1}^s \ch_{l,d(i) + j} \al_{l,i}^m \be_{l,d(i) + j}^n \ibpos{\al_{l,i},\be_{l,d(i) + j}} \notag \\
&\quad + C \sum_{l = 0}^{\lfloor \frac{L-1}{2} \rfloor} \sum_{i = 1}^{(s + 1)^l} \sum_{j = 1}^s \cv_{l + 1,d(i) + j} \al_{l + 1,d(i) + j}^m \be_{l,d(i) + j}^n \ibpos{\al_{l + 1,i},\be_{l,d(i) + j}}. \label{eqn:equilibrium_distribution_numerical_pos}
\intertext{For $m \ge 0, ~ n = 0$,}
\bld{p}_L(m,n) &= C \sum_{l = 0}^{\lfloor \frac{L}{2} \rfloor} \sum_{i = 1}^{(s + 1)^l} \al_{l,i}^m \hb_{l,i}. \label{eqn:equilibrium_distribution_numerical_hor}
\intertext{For $m \ge 0, ~ n \le -1$,}
\bld{p}_L(m,n) &= C \sum_{l = 0}^{\lfloor \frac{L}{2} \rfloor} \sum_{i = 1}^{(s + 1)^l} \ch_{l,i(s + 1)} \al_{l,i}^m \be_{l,i(s + 1)}^{-n} \ibneg{\al_{l,i},\be_{l,i(s + 1)}} \notag \\
&\quad + C \sum_{l = 0}^{\lfloor \frac{L-1}{2} \rfloor} \sum_{i = 1}^{(s + 1)^l} \cv_{l + 1,i(s + 1)} \al_{l + 1,i(s + 1)}^m \be_{l,i(s + 1)}^{-n} \ibneg{\al_{l + 1,i(s + 1)},\be_{l,i(s + 1)}}, \label{eqn:equilibrium_distribution_numerical_neg}
\end{align}%
\end{subequations}%
where the empty sum $\sum_{l = 0}^{-1}$ is 0. Here, $L = 0$ indicates only the initial solution and for instance $L = 3$ indicates an initial solution, a vertical, horizontal and another vertical compensation. Naturally, as $L$ increases, the approximation becomes more accurate.

We perform several numerical experiments that verify that under SED routing the joint queue length process concentrates between the line where the expected delays in both queues are equal $q_1 + 1 = (q_2 + 1)/s$ and the line where the expected waiting time is equal $q_1 = q_2/s$ using the equilibrium distribution. To this end, we consider a system with service rates 1 and $s = 3$, $q = 0.4$ and set $L = 16$ in \eqref{eqn:equilibrium_distribution_numerical}. The equilibrium distribution for this model and varying $\rho$ is given in Figure~\ref{fig:equilibrium_distribution} in the form of a heat plot, supporting our claim.

\begin{figure}%
\centering%
\subfloat[$\rho = 0.6$]{%
\includegraphics{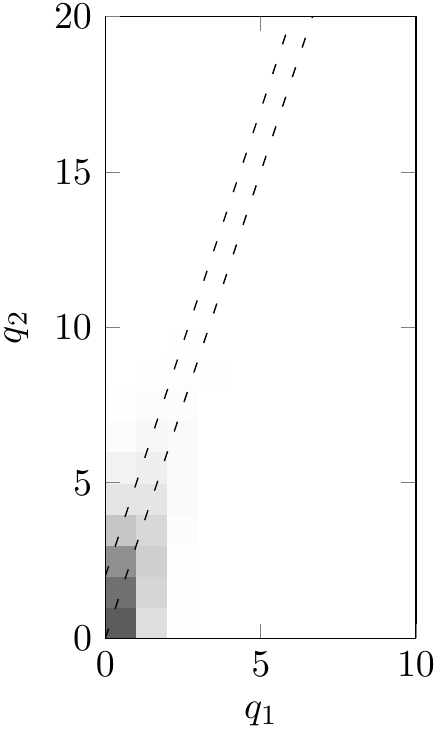}
}%
\subfloat[$\rho = 0.75$]{%
\includegraphics{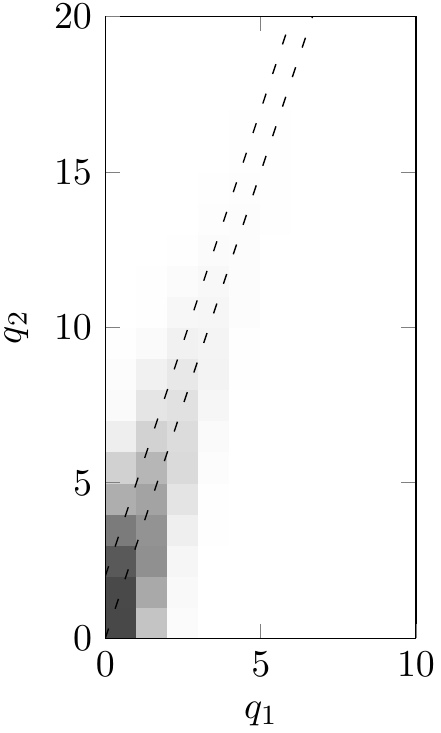}
}%
\subfloat[$\rho = 0.9$]{%
\includegraphics{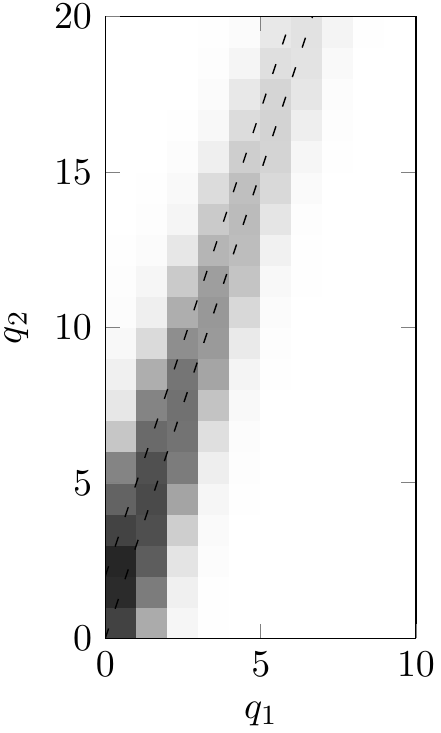}
}%
\caption{Heat plot of the equilibrium distribution mass for the SED system with $s = 3$, $q = 0.4$, and varying $\rho$, determined according to \eqref{eqn:equilibrium_distribution_numerical} with $L = 16$. The heat plot shows where the probability mass is located (darker colour means more mass). The dashed lines are $q_1 + 1 = (q_2 + 1)/s$ and $q_1 = q_2/s$.}%
\label{fig:equilibrium_distribution}%
\end{figure}%

Next, to demonstrate the rate of convergence of the series in \eqref{eqn:equilibrium_distribution_introduction}, we derive the number of compensation steps $L$ for which the equilibrium probabilities $\pb{m,n}$ are considered sufficiently accurate: As a measure of accuracy we compute for each state $(m,n)$ the minimum number of compensation steps $L$ such that
\begin{equation}%
\max_{r = 0,1,\ldots,s - 1} \frac{| p_L(m,n,r) - p_{L - 1}(m,n,r)|}{p_{L - 1}(m,n,r)} < 10^{-4}. \label{eqn:accuracy_condition_numerical}
\end{equation}%
Figure~\ref{fig:no_compensation_steps_needed} shows that away from the origin, the convergence of the series is very fast, but the convergence is also quite fast for states close to the origin. Note that the distance of a state $(m,n)$ to the origin is directly related to the rate of convergence of the series expression \eqref{eqn:equilibrium_distribution_introduction}. In particular, it seems to be a function of $m + |n|$: faster convergence further away from the origin. This property is formally proven in Section~\ref{subsec:absolute_convergence} and can be exploited for numerical computations.

\begin{figure}%
\centering%
\includegraphics{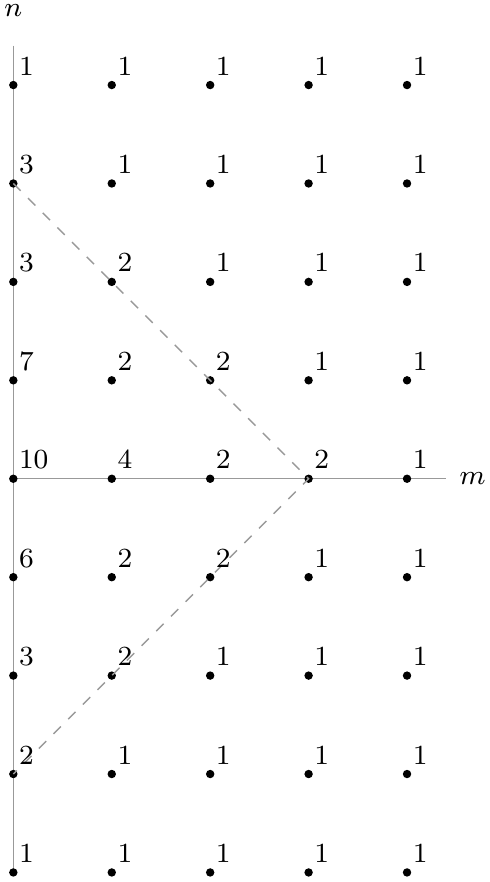}
\caption{The number of compensation steps $L$ for each state $(m,n)$ such that the resulting $\bld{p}_L(m,n)$ is accurate according to \eqref{eqn:accuracy_condition_numerical}. The dashed line is $m + |n| = 3$. Parameters are $s = 4$, $\rho = 0.8$, and $q = 0.4$.}%
\label{fig:no_compensation_steps_needed}%
\end{figure}%

\begin{remark}[No curse of dimensionality]\label{rem:no_curse_of_dimensionality}%
Take a triangular set of states $\mathcal{T}_{M} = \{ (m,n) \mid m \in \Nat_0, ~ n \in \Int, ~ m + |n| \le M \}$, where $M$ is some non-negative integer. Technically, $M$ needs to be strictly larger than some non-negative integer $N$, but we do not go into the details here; the lower bound $N$ is described in Section~\ref{subsec:absolute_convergence}. For states outside $\mathcal{T}_{M}$, we can use \eqref{eqn:equilibrium_distribution_numerical} to compute $\pb{m,n}$. Since the number of compensation steps $L$ required to achieve accurate results according to \eqref{eqn:accuracy_condition_numerical} decreases with $m + |n|$, the number of compensation steps $L$ for each state $(m,n) \notin \mathcal{T}_{M}$ is relatively small. For states $(m,n) \in \mathcal{T}_{M}$, a linear system of equilibrium equations needs to be solved to determine the equilibrium probabilities $\pb{m,n}$, where one uses that $\pb{m,n}, ~ (m,n) \notin \mathcal{T}_{M}$ are known.

As an example, for $s = 4$, $\rho = 0.8$ and $q = 0.4$ the choice $M = 3$ implies that $L = 1$, which gives accurate results for $\pb{m,n}, ~ (m,n) \notin \mathcal{T}_{M}$, see Figure~\ref{fig:no_compensation_steps_needed}. So it is evident that the compensation approach does not suffer from the curse of dimensionality.
\end{remark}%


\section{Applying the compensation approach}%
\label{sec:applying_the_compensation_approach}%
%


\subsection{Outline}%
\label{subsec:outling_compensation_approach}%

In the following subsections we describe the main steps in constructing the equilibrium distribution of the SED system. First, we develop some preliminary results in Section~\ref{subsec:preliminary_results}, showing that the inner equations have a product-form solution and we determine one of the two parameters of the product-form solution explicitly. Using these preliminary results, we determine the unique initial solution in Section~\ref{subsec:initial_solution}. The vertical compensation step is outlined in Section~\ref{subsec:compensation_vertical_boundary}. Section~\ref{subsec:compensation_horizontal_boundary} describes the horizontal compensation procedure. We formalize the resulting solution in terms of a sequence of product-forms in Section~\ref{subsec:constructing_equilibrium_distribution}. This sequence of compensation terms grows as a $(s + 1)$-fold tree and the problem is in showing that the sequence converges. Section~\ref{subsec:absolute_convergence} is devoted to the issue of convergence.


\subsection{Preliminary results}%
\label{subsec:preliminary_results}%

We conjecture that the inner equations have a product-form solution. To this end, we examine a related model that has the same behavior in the interior as the original model. We then show that the equilibrium distribution of this related model can be expressed as a product-form solution. Moreover, modeling this process as a quasi-birth--and--death (QBD) queue, we obtain a closed form expression for one of the parameters of the product-form solution. This procedure is closely related to the one in \cite[Section~4]{CompApproachErlangArrivals_Adan2013}.

The related model is constructed as follows. We start from the state space of the original model in Figure~\ref{fig:state_space_trd}(a) and bend the vertical axis as shown in Figure~\ref{fig:trd_modified_model}. Note that for this modified model, $m \in \Int$. We add an additional state $(-1,0,s-1)$ with a transition rate $\hat{A}_{0,1} = s \eb{0}$ to state $(-1,1)$ and a transition rate $\hat{B}_{0,-1} = (1 + s)\rho q \eb{s-1}$ to state $(-1,-1)$, where $\eb{i}$ is a column vector of zeros of length $s$ with a one at position $i$. The transitions from the states on the diagonal $m + n = 0, ~ n \ge 1$ are kept consistent with the transitions from a state in the positive interior of the SED model, where the downward transitions are redirected to the state $(-1,0,s - 1)$, specifically, $\hat{A}_{0,-1} = s \eb{0}^T$. Similarly, the transitions from the states on the diagonal $m - n = 0, ~ n \le -1$ are kept consistent with the ones in the negative interior of the SED model, where the upward transitions are redirected to the state $(-1,0,s-1)$, specifically, $\hat{B}_{0,1} = \oneb^T$, where $\oneb$ is a column vector of ones of size $s$.

\begin{figure}%
\centering%
\includegraphics{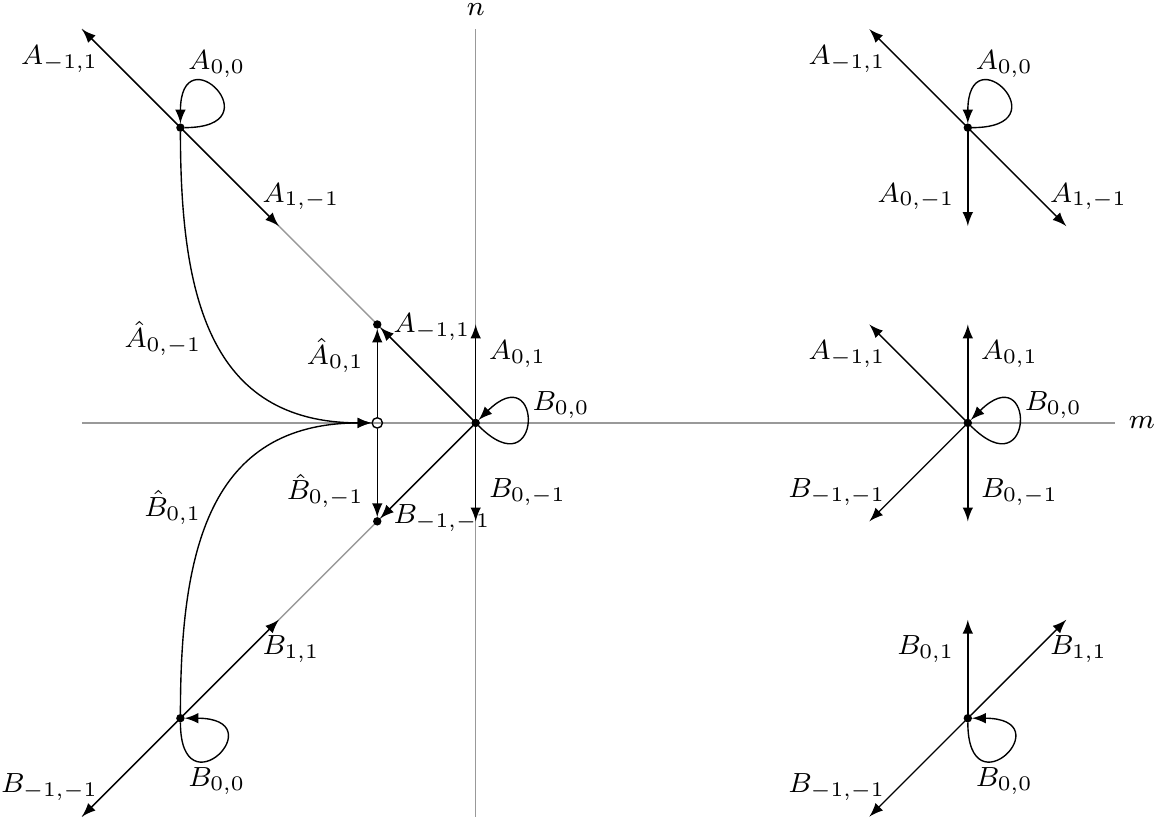}
\caption{$s$-layered transition rate diagram of the modified model. Note that the third dimension, i.e.~$r$, is perpendicular to the page. The non-filled dot, graphed at position $(-1,0)$, corresponds to the single state $(-1,0,s-1)$.}%
\label{fig:trd_modified_model}%
\end{figure}%

The characteristic feature of the modified model is that its equilibrium equations for $m + |n| = 0, ~ |n| \ge 2$ are exactly the same as the ones in the interior and the equilibrium equations for $n \in \{-1,0,1\}$ are exactly the same as the ones on the horizontal boundary of the original model. In this sense, the modified model has no ``vertical boundary'' equations.

We next present two lemmas. The first lemma states that a product-form solution exists for the modified model, while in the second lemma we identify the geometric term of the product form expression. Let $\pbmodi{m,n} = (\pmodi{m,n,0},\pmodi{m,n,1},\ldots,\pmodi{m,n,s-1})^T$ denote the equilibrium distribution of the modified model.

\begin{lemma} \label{lem:modified_model_product_form}%
For $\rho < 1$, the equilibrium distribution of the modified model exists and is of the form
\begin{equation}%
\pbmodi{m,n} = \al^m \qbmodi{n}, \quad m + |n| \ge 0, ~ n \in \Int \label{eqn:modified_model_product_form}
\end{equation}%
with $\al \in (0,1)$ and $\qbmodi{n} = (\qmodi{n,0},\qmodi{n,1},\ldots,\qmodi{n,s - 1})^T$ such that
\begin{equation}%
\sum_{n = -\infty}^\infty \al^{-|n|} \qmodi{n,r} < \infty, \quad r = 0,1,\ldots,s - 1.
\end{equation}%
\end{lemma}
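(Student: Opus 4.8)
The plan is to realize the modified model as a level-independent quasi-birth-and-death (QBD) process whose level is indexed by $n$ and whose phase is the pair $(m,r)$ — but since $m$ is unbounded this is not literally a QBD. Instead I would use the half-plane structure differently: because the modified model has no vertical boundary (its equations for $|n|\ge 2$ coincide with the interior equations, and the equations for $n\in\{-1,0,1\}$ with the horizontal-boundary equations of the SED model), the natural level variable is $m$. I would first argue existence of the equilibrium distribution: the modified model is obtained from a stable ergodic chain (the SED chain with $\rho<1$) by a bounded perturbation that bends the axis and adds one state, and one checks directly that the drift conditions are preserved, so positive recurrence holds and $\pbmodi{m,n}$ is well-defined and summable. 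This gives existence for $\rho<1$.

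Next I would establish the product form in $m$. Fix the $n$-direction as the phase. Grouping the states with a common value of $m$, the balance equations of the modified model become a QBD-type recursion in $m$ with a transition structure that is \emph{homogeneous} in $m$ for $m\ge 1$ (this is exactly the point of the construction: bending the axis removed the vertical boundary, so there is a single homogeneous stratum for all $m\ge 0$, with the extra state $(-1,0,s-1)$ and the diagonal $m+|n|=0$ playing the role of the boundary level). Matrix-geometric theory (Neuts) then yields $\pbmodi{m,\cdot}=R^{m}\,\pbmodi{0,\cdot}$ for the appropriate rate matrix $R$, with spectral radius $\al\coloneqq\mathrm{sp}(R)\in(0,1)$ by positive recurrence. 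To pass from the matrix-geometric form to the rank-one form $\al^{m}\qbmodi{n}$ claimed in \eqref{eqn:modified_model_product_form}, I would invoke the standard argument that $R$ has a unique dominant eigenvalue $\al$ with a strictly positive left eigenvector (irreducibility/aperiodicity of the phase process along the interior), so that $R^{m}$ normalized behaves like $\al^{m}$ times a rank-one term; since the phase space here is itself infinite (indexed by $n$), the clean statement is that the equilibrium vector, which lies in the relevant positive cone, must be a scalar multiple of the Perron eigenvector, giving $\pbmodi{m,n}=\al^{m}\qmodi{n}$ with a single geometric rate $\al$. The summability $\sum_{n}\al^{-|n|}\qmodi{n,r}<\infty$ then follows from overall summability of $\pbmodi{\cdot,\cdot}$ together with the fact that along $m+|n|=0$ (resp.\ $m-n=0$) the mass is $\al^{|n|}\qmodi{n,r}$, so finiteness of $\sum_{m,n}\pbmodi{m,n}$ forces exactly this weighted sum of $\qmodi{n,r}$ to converge.

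The main obstacle is the passage from a generic matrix-geometric (or separ-of-variables) solution to the assertion that \emph{one} scalar $\al$ works simultaneously across the two quadrants $n\ge 1$ and $n\le -1$, which have different transition matrices $\A{\cdot,\cdot}$ and $\B{\cdot,\cdot}$; a priori one might fear two different decay rates. The resolution is that the coupling along $n\in\{-1,0,1\}$ (the former horizontal boundary, now interior-like) ties the two halves together into a single irreducible chain in the $m$-direction, so there is a single $R$ and a single $\mathrm{sp}(R)=\al$; I would make this precise by writing the combined generator in the $m$-level blocks and checking that the homogeneous block structure (the matrices $\A{\cdot,\cdot},\B{\cdot,\cdot}$ are $m$-independent) holds across \emph{all} of $n\in\Int$ after the bending. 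A secondary technical point is justifying $\al<1$ strictly and $\al>0$: positivity is immediate from irreducibility, and $\al<1$ is equivalent to positive recurrence in the $m$-direction, which I would deduce from the mean-drift of $m$ under the stationary phase distribution being negative — itself a consequence of $\rho<1$ via the same computation that gives \eqref{eqn:stability_condition}. The explicit closed form for $\al$ promised in the next lemma I would leave to that lemma, noting only here that $\al$ is the Perron root of the $m$-direction rate matrix.
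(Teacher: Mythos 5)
Your route has a genuine gap, and it is not a repairable technicality: the mechanism you invoke cannot deliver the \emph{exact} product form claimed in the lemma. First, the geometry does not fit your level structure. The modified model lives on the wedge $\{(m,n) : m + |n| \ge 0\}$ together with the apex state $(-1,0,s-1)$; the ``bent'' former vertical boundary is the pair of diagonals $m+n=0$, $n\ge 1$ and $m-n=0$, $n\le -1$. Slicing by $m$ therefore does not give a level-independent QBD: for $m<0$ the set of admissible phases $n$ depends on $m$, and the boundary of the process is smeared over infinitely many negative levels $m=-1,-2,\dots$ rather than sitting at a single boundary level. Second, even granting an operator-geometric representation $\pbmodi{m,\cdot}=R^m\,\pbmodi{0,\cdot}$ (which for a countably infinite phase space already requires nonstandard machinery), this yields the rank-one form $\al^m \qbmodi{n}$ for \emph{all} $m$ only if $\pbmodi{0,\cdot}$ happens to be a Perron eigenvector of $R$. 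That is precisely the assertion of the lemma; spectral-radius and Perron--Frobenius considerations give at best the asymptotic decay rate as $m\to\infty$, not the exact identity $\pbmodi{m+1,n}=\al\,\pbmodi{m,n}$ for every $m+|n|\ge 0$.

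The idea the paper actually uses, and which your proposal is missing, is self-similarity of the wedge: the modified model restricted to the shifted set $\{(m,n) : m+|n|\ge n_0\}\cup\{(n_0-1,0,s-1)\}$ is, by construction (the redirection of the edge transitions into the apex state is exactly what makes this work), the \emph{same} Markov process for every $n_0$. Uniqueness of the equilibrium distribution of this common process then forces the distributions on the nested wedges to be proportional, i.e.\ $\pbmodi{m+1,n}=\al\,\pbmodi{m,n}$ exactly, which is the product form; the summability condition $\sum_n \al^{-|n|}\qmodi{n,r}<\infty$ falls out by reading off the probabilities along the diagonal edge $m=-|n|$, whose total mass is at most $1$. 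Your observation that the coupling through $n\in\{-1,0,1\}$ ties the two quadrants to a single decay rate is correct in spirit, but it is the shift-invariance of the whole wedge, not an eigenvalue argument in the $m$-direction, that proves it. (Your existence argument is about as informal as the paper's own one-line assertion of stability, so I would not count that against you.)
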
%

\begin{proof}%
First notice that the modified model is stable whenever the original model is stable and that $\pbmodi{m,n}$ satisfies the inner and horizontal boundary equations \eqref{eqn:equilibrium_eqs_I+}-\eqref{eqn:equilibrium_eqs_H} for all $m + |n| = 0$, except for state $(0,0)$. Observe that the modified model restricted to the area $\{ (m,n) \mid m \in \Nat_0, ~ n \in \Int, ~ m + |n| \ge n_0 \} \cup \{(n_0 - 1,0,s - 1)\}, ~ n_0 = 1,2,\ldots$ embarked by two lines parallel to the diagonal axes, yields the exact same process. Hence, we can conclude that
\begin{equation}%
\pbmodi{m + 1,n} = \al \pbmodi{m,n}, \quad m + |n| \ge 0, ~ n \in \Int
\end{equation}%
and therefore
\begin{equation}%
\pmodi{m,n,r} = \al^m \qmodi{n,r}, \quad m + |n| \ge 0, ~ n \in \Int, ~ r = 0,1,\ldots,s - 1.
\end{equation}%
Finally, we observe that
\begin{equation}%
\sum_{n = -\infty}^\infty \al^{-|n|} \qmodi{n,r} = \sum_{n = -\infty}^\infty \pmodi{-|n|,n,r} < 1,
\end{equation}%
which concludes the proof.
\end{proof}%

In Lemma~\ref{lem:modified_model_product_form} we have shown that the equilibrium distribution of the modified model has a product form which is unique up to a positive multiplicative constant. In the next lemma we determine the unique $\al$ in \eqref{eqn:modified_model_product_form}. More concretely, the unique $\al$ is equal to $\rho^{1 + s}$.

\begin{lemma}\label{lem:modified_model_specific_alpha}%
For $\rho < 1$, the equilibrium distribution of the modified model is of the form
\begin{equation}%
\pbmodi{m,n} = \rho^{(1+s)m} \qbmodi{n}, \quad m + |n| \ge 0, ~ n \in \Int.
\end{equation}%
\end{lemma}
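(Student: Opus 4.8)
The goal is to pin down the geometric ratio $\al$ in the product form $\pbmodi{m,n} = \al^m \qbmodi{n}$ of Lemma~\ref{lem:modified_model_product_form}, and show it equals $\rho^{1+s}$. The idea is to recognize the modified model — after the substitution $\pbmodi{m,n} = \al^m \qbmodi{n}$ — as a quasi-birth--and--death (QBD) process in the ``level'' variable $m$, with phase variable $(n,r)$ (or a truncation thereof), so that $\al$ is the decay rate of the QBD, characterized by a spectral/Perron--Frobenius condition on the associated matrix-analytic $R$-matrix. Rather than computing $R$ explicitly, I would use the standard \emph{mean-drift} shortcut: the decay rate of a QBD with matrix generator blocks $A_0$ (up in level), $A_1$ (within level), $A_2$ (down in level) is determined by finding the scalar $\al$ and the positive left phase-vector $\bld{u}$ solving $\bld{u}(A_0 + \al^{-1} A_2 + \al^{0}\cdot 0 \dots)$ — more precisely $\bld{u}\,A(\al) = \zerob$ where $A(z) = A_2 + z A_1 + z^2 A_0$ evaluated appropriately — together with the requirement that $\al$ be the root in $(0,1)$. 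I would instead argue more cleanly by a \emph{global balance / cut} argument, which avoids matrix algebra entirely.

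\textbf{Key steps.} First, I would set up a vertical cut of the state space of the modified model between ``level $m$'' and ``level $m+1$'' (a cut along a line parallel to one of the diagonal axes, consistent with the self-similar decomposition already used in the proof of Lemma~\ref{lem:modified_model_product_form}). Balance across this cut equates the total probability flux upward across the cut with the total flux downward. Upward transitions in the $m$-direction occur only via arrivals at rate $(1+s)\rho$ (the $\A{1,-1}$ and $\B{1,1}$ transitions), and downward transitions occur only via service completions at rate $1$ from queue-1-type states or rate $s$ from queue-2-type states (the $\A{-1,1}$, $\B{-1,-1}$, and $\A{0,-1}$-shifted-to-$m-1$, $\B{0,-1}$-shifted transitions). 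Plugging in $\pbmodi{m,n} = \al^m \qbmodi{n}$, every term on the level-$m$ side of the cut carries a common factor $\al^{m}$ (or $\al^{m+1}$) which cancels, leaving a single scalar identity relating $\al$ to $\rho$ and to the sums $\sum_n \qmodi{n,r}$ over the relevant phases. Second, I would show these phase-sums arrange themselves so that the identity collapses to $\al = \rho^{1+s}$: the cleanest route is to interpret the cut-balance probabilistically as ``rate in $=$ rate out'' for the number of groups of work in the system, where one group needs total service effort $1$ on queue 1 and total effort $1$ on queue 2 (a group of size $s$ served at rate $s$), so that across a cut in $m$ the up-rate is $\la \cdot \Prob{\text{level}=m\text{ side}}$ and the down-rate is $(1+s)\cdot\Prob{\text{level}=m+1\text{ side, server busy config}}$, yielding $\al = \la/(1+s) \cdot (\text{something that is } \rho^{s})$ — I would organize the bookkeeping so the exponent $1+s$ emerges from the $s$ internal $r$-layers each contributing a factor $\rho$ plus the one $m$-layer. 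Finally I would invoke uniqueness of $\al \in (0,1)$ from Lemma~\ref{lem:modified_model_product_form} to conclude that this is \emph{the} $\al$.

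\textbf{Alternative, and the main obstacle.} A more mechanical but safer alternative is: substitute $\al^m \qbmodi{n}$ into the inner equations, sum all equilibrium equations over $n \in \Int$ and over $r$ (telescoping the $n$-shifts and the $r$-shifts, which is legitimate by the summability in Lemma~\ref{lem:modified_model_product_form}), and read off a single scalar equation in $\al$; then verify $\al = \rho^{1+s}$ solves it and lies in $(0,1)$, and appeal to uniqueness. The genuine difficulty — and where I expect to spend the real effort — is the bookkeeping at the ``seam'' states $m+|n|\in\{-1,0,1\}$ and the added state $(-1,0,s-1)$: the telescoping of the cut/summation argument produces boundary leftover terms precisely there, and one must check that the artificial transition rates $\hat A_{0,1}=s\eb{0}$, $\hat A_{0,-1}=s\eb{0}^T$, $\hat B_{0,-1}=(1+s)\rho q\,\eb{s-1}$, $\hat B_{0,1}=\oneb^T$ were designed exactly so that these leftover terms cancel, leaving the clean relation $\al=\rho^{1+s}$. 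Confirming that cancellation — i.e.\ that the modified model's boundary really is ``transparent'' to the cut argument — is the crux; everything else is routine.
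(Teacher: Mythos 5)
Both of your routes reduce to a single scalar balance relation across a cut at constant $m$ (summing the equilibrium equations over $n$ and $r$ gives the same relation), and that is where the argument has a genuine gap. Substituting $\pbmodi{m,n}=\al^m\qbmodi{n}$ into such a cut yields an identity of the form $(1+s)\rho\,U=\al\,D$, where $U=\sum_{n\ge1}\sum_r\qmodi{n,r}+\sum_{n\le-1}\qmodi{n,s-1}+\cdots$ collects the phases from which an arrival increases $m$, and $D=\sum_{n\ge0}\sum_r\qmodi{n,r}+s\sum_{n\le0}\qmodi{n,0}+\cdots$ collects the rate-weighted phases from which a service completion decreases $m$. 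This is one equation in the unknowns $\al$ and the phase sums; to conclude $\al=\rho^{1+s}$ you must additionally prove $U/D=\rho^{s}/(1+s)$, and your sketch of how ``the exponent $1+s$ emerges from the $s$ internal $r$-layers each contributing a factor $\rho$'' is exactly this missing step rather than a proof of it. The seam/boundary bookkeeping that you single out as the crux is comparatively harmless; the real difficulty is closing the phase-sum ratio, and a constant-$m$ cut is too coarse to do it.

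The paper closes it with a different cut. Since in the modified model every state behaves like an interior state, both servers are always busy, so the \emph{total number of customers} $k$ (with $k=(1+s)m+sn+r$ for $n\ge0$ and $k=(1+s)m-n+r$ for $n<0$) is itself a one-dimensional birth--death Markov chain with up-rate $(1+s)\rho$ and down-rate $1+s$ for $k>s$. Cutting between $k$ and $k+1$ gives $\hat{p}_{k+1}=\rho\,\hat{p}_k$ with no unknown phase sums at all, while the product form of Lemma~\ref{lem:modified_model_product_form} gives $\hat{p}_{k+s+1}=\al\,\hat{p}_k$, because increasing $m$ by one adds exactly $1+s$ customers; combining the two yields $\al=\rho^{1+s}$. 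If you want to keep a flux-balance argument, the cut must therefore be taken at constant $k$ (a line of slope $-(1+s)/s$ in the positive quadrant of the $(m,n)$-plane), not at constant $m$ and not along the diagonal $m+|n|=\mathrm{const}$; only the constant-$k$ cut exposes the per-customer ratio $\rho$ that produces the exponent $1+s$.
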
%

\begin{proof}%
Let $k$ denote the total number of customers in the system, i.e.
\begin{equation}%
k = \begin{cases}%
(1 + s)m + sn + r, & n \ge 0, \\
(1 + s)m - n + r, & n < 0.
\end{cases}%
\end{equation}%
Then, the number of customers in the system forms a Markov process and for all states $k > s$, the transitions are given by: (i) from state $k$ to state $k + 1$ with rate $(1 + s)\rho$; and (ii) from state $k + 1$ to state $k$ with rate $1 + s$. Let $\hat{p}_k$ denote the probability of having $k$ customers in the system. From the balance principle between states $k$ and $k + 1$ we obtain
\begin{equation}%
\hat{p}_{k + 1} = \rho \hat{p}_k, \quad k > s.
\end{equation}%
Furthermore,
\begin{align}%
\hat{p}_{k + s + 1} &= \sum_{\substack{(m,n,r) \\ (1 + s)m + sn + r = k + s + 1}} \pmodi{m,n,r} + \sum_{\substack{(m,n,r) \\ (1 + s)m - n + r = k + s + 1}} \pmodi{m,n,r} \notag \\
&= \sum_{\substack{(m,n,r) \\ (1 + r)(m - 1) + sn + r = k}} \al^m \qmodi{n,h} + \sum_{\substack{(m,n,r) \\ (1 + s)(m - 1) - n + r = k}} \al^m \qmodi{n,r} \notag \\
&= \al \sum_{\substack{(l,n,r) \\ (1 + s)l + sn + r = k}} \al^l \qmodi{n,h} + \al \sum_{\substack{(l,n,r) \\ (1 + s)l - n + r = k}} \al^l \qmodi{n,r} \notag \\
&= \al \hat{p}_k.
\end{align}%
Combining the last two results immediately yields $\al = \rho^{1 + s}$.
\end{proof}%

Combining Lemmas~\ref{lem:modified_model_product_form} and \ref{lem:modified_model_specific_alpha} yields the following result for the original model.

\begin{proposition}\label{prop:initial_product_form}%
For $\rho < 1$, the inner and horizontal boundary equations \eqref{eqn:equilibrium_eqs_I+}-\eqref{eqn:equilibrium_eqs_H} have a unique solution of the form
\begin{equation}%
\pb{m,n} = \rho^{(1 + s)m} \qb{n}, \quad m \ge 0, ~ n \in \Int \label{eqn:initial_product_form}
\end{equation}%
with $\qb{n} = (\q{n,0},\q{n,1},\ldots,\q{n,s - 1})^T$ non-zero and
\begin{equation}%
\sum_{n = -\infty}^\infty \rho^{-(1 + s)|n|} \q{n,r} < \infty, ~ r = 0,1,\ldots,s - 1. \label{eqn:initial_product_form_condition}
\end{equation}%
\end{proposition}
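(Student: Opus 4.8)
The plan is to obtain the initial solution by transferring Lemmas~\ref{lem:modified_model_product_form} and \ref{lem:modified_model_specific_alpha} from the modified model to the SED model. The modified model was built to have \emph{no} vertical boundary: after cancelling the common power in the $m$-coordinate, every one of its balance equations is of the same form as one of the interior equations \eqref{eqn:equilibrium_eqs_I+}--\eqref{eqn:equilibrium_eqs_I-} or one of the horizontal boundary equations \eqref{eqn:equilibrium_eqs_H+}--\eqref{eqn:equilibrium_eqs_H}, the only exception being the balance equation attached to the added state $(-1,0,s-1)$. So the proposition is essentially this equation-by-equation dictionary applied to the two lemmas.

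\textbf{Existence.} By Lemmas~\ref{lem:modified_model_product_form} and \ref{lem:modified_model_specific_alpha} the modified model has equilibrium distribution $\pbmodi{m,n}=\rho^{(1+s)m}\qbmodi{n}$ with $\sum_{n\in\Int}\rho^{-(1+s)|n|}\qmodi{n,r}<\infty$ for each $r$. I would set $\qb{n}\defi\qbmodi{n}$ and $\pb{m,n}\defi\rho^{(1+s)m}\qb{n}$ for $m\ge0$, $n\in\Int$. Substituting this product form into any of \eqref{eqn:equilibrium_eqs_I+}--\eqref{eqn:equilibrium_eqs_H} and dividing out the common power of $\rho^{1+s}$ turns the equation into a relation among the vectors $\qb{n-1},\qb{n},\qb{n+1}$ alone; by the dictionary above this relation is exactly one satisfied by $\pbmodi{}$, so $\pb{}$ satisfies \eqref{eqn:equilibrium_eqs_I+}--\eqref{eqn:equilibrium_eqs_H} for all $m\ge1$ (these equations only constrain states with $m\ge1$, so there is nothing to check at $m=0$). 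Strict positivity of the modified model's stationary distribution gives $\qb{n}>0$ componentwise, so in particular $\qb{n}$ is nonzero, and \eqref{eqn:initial_product_form_condition} is precisely the summability condition of Lemma~\ref{lem:modified_model_product_form} with $\al=\rho^{1+s}$.

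\textbf{Uniqueness up to a positive multiple.} Conversely, I would start from any solution $\pb{m,n}=\rho^{(1+s)m}\qb{n}$ of \eqref{eqn:equilibrium_eqs_I+}--\eqref{eqn:equilibrium_eqs_H} satisfying \eqref{eqn:initial_product_form_condition}. Since these equations only constrain the vectors $\qb{n}$, the same formula $\rho^{(1+s)m}\qb{n}$ makes sense on the (larger, $m$-unbounded) state space of the modified model, and by the dictionary it solves every modified balance equation except possibly the one at $(-1,0,s-1)$. But \eqref{eqn:initial_product_form_condition} makes this extended vector absolutely summable over the whole modified state space — the geometric sum over $m$ converges because $\rho^{1+s}<1$ and reproduces exactly the weight $\rho^{-(1+s)|n|}$ — so, since the rows of the generator sum to zero, summing all modified balance equations (legitimate by absolute convergence) forces the missing equation at $(-1,0,s-1)$ too. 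Hence the extended vector is an absolutely summable left null vector of the generator of an irreducible positive recurrent Markov chain, and is therefore a scalar multiple of its stationary distribution $\pbmodi{}$; thus $\qb{}$ is a positive multiple of $\qbmodi{}$, which is the claimed uniqueness.

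I expect the real work to be in making the equation-by-equation dictionary precise: one must verify that bending the vertical axis and redirecting the former vertical transitions into the added state $(-1,0,s-1)$ genuinely converts the old vertical-boundary equations into interior- or horizontal-type equations, in particular for the states close to the diagonal and for $(-1,0,s-1)$ itself. On the uniqueness side, the point worth care is that the weight $\rho^{-(1+s)|n|}$ in \eqref{eqn:initial_product_form_condition} is exactly the correct strength — it is forced by the geometric decay rate $\rho^{1+s}$ in the $m$-direction — so that it upgrades to absolute summability over the modified state space and thereby collapses the solution space to one dimension.
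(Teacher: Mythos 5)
Your overall strategy --- transfer Lemmas~\ref{lem:modified_model_product_form} and \ref{lem:modified_model_specific_alpha} to the original model --- is exactly the paper's, which in fact offers no further proof beyond ``combining the two lemmas.'' Your existence half is fine: the equations \eqref{eqn:equilibrium_eqs_I+}--\eqref{eqn:equilibrium_eqs_H} for $m \ge 1$ involve only states with $m \ge 0$, all of which are regular states of the modified model carrying identical rates, so the restriction of $\pbmodi{}$ to $m \ge 0$ satisfies them, and \eqref{eqn:initial_product_form_condition} is the summability statement of Lemma~\ref{lem:modified_model_product_form} with $\al = \rho^{1+s}$.

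The gap is in the uniqueness half, precisely at the point you flag as ``the real work'' but then dispose of too quickly. Your argument rests on the claim that the extended vector violates \emph{only} the balance equation at the added state $(-1,0,s-1)$, so that the single relation obtained by summing all balance equations (rows of the generator sum to zero) recovers it. But $(-1,0,s-1)$ is a single state standing in for an entire column of $s$ states: all of $(-1,-1,0),\ldots,(-1,-1,s-1)$ feed into it via $\hat{B}_{0,1} = \oneb^T$, and its outgoing rates $\hat{A}_{0,1} = s\eb{0}$ and $\hat{B}_{0,-1} = (1+s)\rho q\, \eb{s-1}$ are not those of the column state $(m,0,s-1)$ in the original model (compare $s$ with the entry $(1+s)\rho(1-q)$ of $\A{0,1}$). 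Consequently the modified model's balance equations at the \emph{receiving} states $(-1,1,0)$ and $(-1,-1,s-1)$ differ from \eqref{eqn:equilibrium_eqs_H+} and \eqref{eqn:equilibrium_eqs_H-} at $m=-1$ in the term involving the added state's probability. You therefore have (at least) three residual scalar equations and only one free scalar (the value you assign to $(-1,0,s-1)$); the ``sum of all equations'' identity supplies exactly one linear relation among the residuals and cannot force all of them to vanish. As written, the conclusion that the extended vector is an invariant measure of the modified model --- and hence a multiple of $\pbmodi{}$ --- does not follow. To close the argument you would need either to verify by direct computation that the two constraints imposed at $(-1,1,0)$ and $(-1,-1,s-1)$ are compatible for solutions of the divided-out recursions, or to abandon the extension route for uniqueness and argue instead via the dimension count that is carried out later in Lemma~\ref{lem:initial_solution} (the $s$ stable roots of \eqref{eqn:determinant_inner_pos}, the one stable root of \eqref{eqn:determinant_inner_neg}, and the coupling equations on $n\in\{-1,0,1\}$ leave a one-dimensional solution space).
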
%

The solution obtained in Proposition~\ref{prop:initial_product_form} satisfies the inner and horizontal boundary equations. However, we still need to specify the form of the vector $\qb{n}$. It will become apparent, from Lemmas~\ref{lem:solution_inner_equations}, \ref{lem:location_number_zeros_inner_pos} and \ref{lem:location_number_zeros_inner_neg}, that the form of the vector $\qb{n}$ is entirely different in the positive quadrant, on the horizontal axis, and the negative quadrant. Correctly identifying $\qb{n}$ will result in the initial solution that satisfies the equilibrium equations of the interior and the horizontal axis. In the following lemmas we describe the form of a solution satisfying the inner equations.

\begin{lemma}\label{lem:solution_inner_equations}\hspace*{1em}%
\begin{enumerate}[label = \textup{(\roman*)}]%
\item The product form $\pb{m,n} = \al^m \be^n \ibpos{\al,\be}, ~ m \ge 0, ~ n \ge 1$ is a solution of the inner equations of the positive quadrant \eqref{eqn:equilibrium_eqs_I+} if
    \begin{equation}%
    \Dpos{\al,\be} \ibpos{\al,\be} = \zerob \label{eqn:solution_inner_pos_should_satisfy}
    \end{equation}%
    with
    \begin{equation}%
    \Dpos{\al,\be} = \al \be \A{0,0} + \al \be^2 \A{0,-1} + \al^2 \A{-1,1} + \be^2 \A{1,-1}
    \end{equation}%
    and the eigenvector $\ibpos{\al,\be} = (\ipos{\al,\be,0},\ipos{\al,\be,1},\ldots,\ipos{\al,\be,s - 1})^T$ satisfies
    \begin{equation}%
    \frac{\ipos{\al,\be,r}}{\ipos{\al,\be,0}} = \Bigl( \frac{\al \be (1 + s)(\rho + 1) - \be^2 (1 + s) \rho - \al^2}{\al \be s} \Bigr)^r, \quad r = 0,1,\ldots,s - 1. \label{eqn:inner_pos_eigenvector}
    \end{equation}%
\item The product form $\pb{m,n} = \al^m \be^{-n} \ibneg{\al,\be}, ~ m \ge 0, ~ n \le -1$ is a solution of the inner equations of the negative quadrant \eqref{eqn:equilibrium_eqs_I-} if
    \begin{equation}%
    \Dneg{\al,\be} \ibneg{\al,\be} = \zerob \label{eqn:solution_inner_neg_should_satisfy}
    \end{equation}
    with
    \begin{equation}%
    \Dneg{\al,\be} = \al \be \B{0,0} + \al \be^2 \B{0,1} + \al^2 \B{-1,-1} + \be^2 \B{1,1}
    \end{equation}%
    and the eigenvector $\ibneg{\al,\be} = (\ineg{\al,\be,0},\ineg{\al,\be,1},\ldots,\ineg{\al,\be,s - 1})^T$ satisfies
    \begin{equation}%
    \hspace*{-1em}\frac{\ineg{\al,\be,r}}{\ineg{\al,\be,0}} =  \frac{\Func{\al,\be,\func{-}{\be}} \func{+}{\be}^r - \Func{\al,\be,\func{+}{\be}} \func{-}{\be}^r}{\Func{\al,\be,\func{-}{\be}} - \Func{\al,\be,\func{+}{\be}}}, \quad r = 0,1,\ldots,s-1 \label{eqn:inner_neg_eigenvector}
    \end{equation}%
    with
    \begin{equation}%
    \func{\pm}{\be} = \frac{(1 + s)(\rho + 1) - \be \pm \sqrt{(\be - (1 + s)(\rho + 1))^2 - 4s(1 + s)\rho}}{2s} \label{eqn:func_roots_eigenvectors}
    \end{equation}%
    and
    \begin{equation}%
    \hspace*{-0.8em}\Func{\al,\be,\funcparam} = \be - (1 + s)(\rho + 1) + s \funcparam + \frac{\be}{\al}(1 + s)\rho \funcparam^{s - 1} = (1 + s) \rho \funcparam^{-1} \bigl( \frac{\be}{\al} \funcparam^s - 1 \bigr). \label{eqn:Func_roots_eigenvectors}
    \end{equation}%
\end{enumerate}%
\end{lemma}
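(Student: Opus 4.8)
The plan is a direct substitution followed by an explicit resolution of the resulting constant-coefficient linear system; the two parts are handled in the same way, the only difference being which block of transition-rate matrices enters.

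\emph{Part (i).} I would substitute the ansatz $\pb{m,n}=\al^m\be^n\ibpos{\al,\be}$ into the positive inner equation \eqref{eqn:equilibrium_eqs_I+} written for a generic $m\ge 1$, $n\ge 2$. Each of the four vector terms then carries a monomial $\al^{m'}\be^{n'}$ with $m'\ge m-1\ge 0$ and $n'\ge n-1\ge 1$, so dividing by the common factor $\al^{m-1}\be^{n-1}$ leaves the coefficients $\al\be$, $\be^2$, $\al\be^2$, $\al^2$ in front of $\A{0,0}$, $\A{1,-1}$, $\A{0,-1}$, $\A{-1,1}$ respectively; this is precisely $\Dpos{\al,\be}\ibpos{\al,\be}=\zerob$, and the manipulation is reversible, which gives the ``if'' statement. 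To pin down the eigenvector, insert $\A{1,-1}=(1+s)\rho\I$, $\A{-1,1}=\I$, $\A{0,-1}=s\M{s-1,0}$ and $\A{0,0}=-(1+s)(\rho+1)\I+s\Lo^T$: then $\Dpos{\al,\be}$ is upper bidiagonal, with scalar diagonal $\al^2+(1+s)\rho\be^2-(1+s)(\rho+1)\al\be$ and superdiagonal $\al\be s$, plus the single extra entry $\al\be^2 s$ in position $(s-1,0)$. Rows $0,\dots,s-2$ of $\Dpos{\al,\be}\ibpos{\al,\be}=\zerob$ give the one-step recursion $\ipos{\al,\be,r+1}=\frac{\al\be(1+s)(\rho+1)-(1+s)\rho\be^2-\al^2}{\al\be s}\,\ipos{\al,\be,r}$, which iterates to \eqref{eqn:inner_pos_eigenvector}; the remaining row $s-1$ is the kernel relation between $\al$ and $\be$ and is irrelevant for the shape of the vector.

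\emph{Part (ii).} The same substitution into \eqref{eqn:equilibrium_eqs_I-}, now dividing by $\al^{m-1}\be^{-n-1}$, yields $\Dneg{\al,\be}\ibneg{\al,\be}=\zerob$ with the stated $\Dneg{\al,\be}$. Inserting $\B{0,0}=\A{0,0}+(1+s)\rho\Lo$, $\B{1,1}=(1+s)\rho\M{0,s-1}$, $\B{0,1}=\I$, $\B{-1,-1}=s\M{s-1,0}$ shows that $\Dneg{\al,\be}$ is tridiagonal apart from the two corner entries in positions $(0,s-1)$ and $(s-1,0)$; its diagonal equals $\al\be(\be-(1+s)(\rho+1))$, its superdiagonal $\al\be s$, and its subdiagonal $\al\be(1+s)\rho$. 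The interior rows $1,\dots,s-2$ give, after dividing by $\al\be$, the three-term recursion $s\,\ineg{\al,\be,r+1}+(\be-(1+s)(\rho+1))\,\ineg{\al,\be,r}+(1+s)\rho\,\ineg{\al,\be,r-1}=0$, whose characteristic polynomial $s\funcparam^2+(\be-(1+s)(\rho+1))\funcparam+(1+s)\rho$ has roots precisely $\func{\pm}{\be}$ of \eqref{eqn:func_roots_eigenvectors}; hence $\ineg{\al,\be,r}=A\func{+}{\be}^r+B\func{-}{\be}^r$ for constants $A,B$. Row $r=0$ carries the corner term $(1+s)\rho\be^2\,\ineg{\al,\be,s-1}$, and after dividing by $\al\be$ it reads $A\,\Func{\al,\be,\func{+}{\be}}+B\,\Func{\al,\be,\func{-}{\be}}=0$ with $\Func{\al,\be,\cdot}$ as in the first form of \eqref{eqn:Func_roots_eigenvectors}; solving for $A/B$ and normalizing at $r=0$ gives \eqref{eqn:inner_neg_eigenvector}. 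Row $r=s-1$, which carries the corner term $\al^2 s\,\ineg{\al,\be,0}$, is again the kernel relation for $(\al,\be)$ in the negative quadrant.

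\emph{Main difficulty.} There is no deep obstacle; the proof is essentially bookkeeping. The points that need care are: tracking the common monomial factors in the two substitutions, respecting the sub-/superdiagonal conventions together with the $0$-based indexing of the matrices, and, in part (ii), checking that the two displayed forms of $\Func{\al,\be,\funcparam}$ coincide when $\funcparam$ is a root of the characteristic polynomial --- a one-line manipulation using that very equation --- which is what lets one rewrite the row-$0$ condition in the clean form above. One should also note that \eqref{eqn:inner_neg_eigenvector} is written for distinct roots $\func{+}{\be}\neq\func{-}{\be}$; in the confluent case the eigenvector is read off from the solution $A\funcparam^r+Br\funcparam^{r-1}$ of the recursion, and the case $s=1$ (scalar $\qb{n}$) is immediate.
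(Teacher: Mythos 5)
Your proposal is correct and follows essentially the same route as the paper's proof: substitute the product form, divide out the common monomial, and solve the resulting bidiagonal (resp.\ tridiagonal-with-corners) system row by row, using the characteristic quadratic with roots $\func{\pm}{\be}$ and the row-$0$ corner condition to fix the combination in part (ii). Your extra observations --- the explicit matrix structure, the equivalence of the two forms of $\Func{\al,\be,\funcparam}$ via the characteristic equation, and the confluent-root caveat --- are all consistent with (and slightly more careful than) the paper's argument; note also that the paper's divisor $\al^{m-1}\be^{-n+1}$ in part (ii) is a typo for your $\al^{m-1}\be^{-n-1}$.
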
%

\begin{proof}%
(i) Inserting the product form $\pb{m,n} = \al^m \be^n \ibpos{\al,\be}$ into \eqref{eqn:equilibrium_eqs_I+} and dividing by $\al^{m - 1}\be^{n - 1}$ results in \eqref{eqn:solution_inner_pos_should_satisfy}. For $\det(\Dpos{\al,\be}) = 0$, the rank of $\Dpos{\al,\be}$ is $s - 1$ and thus we have one free variable, which allows us to express $\ipos{\al,\be,r}$ in terms of $\ipos{\al,\be,0}$. The eigenvector $\ibpos{\al,\be}$ follows from the system of linear equations \eqref{eqn:solution_inner_pos_should_satisfy}, which reads
\begin{equation}%
\bigl( \al^2 + \be^2(1 + s)\rho - \al \be(1 + s)(\rho + 1) \bigr) \ipos{\al,\be,r} + \al \be s \ipos{\al,\be,r + 1} = 0, \quad r = 0,\ldots,s - 2
\end{equation}
and gives \eqref{eqn:inner_pos_eigenvector}.

\noindent (ii) Inserting the product form $\pb{m,n} = \al^m \be^{-n} \ibneg{\al,\be}$ into \eqref{eqn:equilibrium_eqs_I-} and dividing by $\al^{m-1}\be^{-n + 1}$ results in \eqref{eqn:solution_inner_neg_should_satisfy}. For $\det(\Dneg{\al,\be}) = 0$, the rank of $\Dneg{\al,\be}$ is $s-1$ and thus we have one free variable, which allows us to express $\ineg{\al,\be,r}$ in terms of $\ineg{\al,\be,0}$. The system of linear equations \eqref{eqn:solution_inner_neg_should_satisfy} reads
\begin{align}%
\bigl( \al \be^2 - \al \be(1 + s)(\rho + 1))& \ineg{\al,\be,0} + \al \be s \ineg{\al,\be,1} + \be^2 (1 + s)\rho \ineg{\al,\be,s - 1} = 0, \label{eqn:inner_neg_system_of_equations_boundary} \\
\al \be (1 + s)\rho \ineg{\al,\be,r - 1}& + \bigl( \al \be^2 - \al \be(1 + s)(\rho + 1)) \ineg{\al,\be,r} \notag \\
& + \al \be s \ineg{\al,\be,r+1} = 0, \quad r = 1,2,\ldots,s - 2. \label{eqn:inner_neg_system_of_equations_recurrence}
\end{align}%
We construct a solution of the form $\ineg{\al,\be,r} = (c_{\scriptscriptstyle +} \func{+}{\be}^r + c_{\scriptscriptstyle -} \func{-}{\be}^r)\ineg{\al,\be,0}$ and thus require $c_{\scriptscriptstyle +} + c_{\scriptscriptstyle -} = 1$. The two roots $\func{\pm}{\be}$ follow from substituting $\ineg{\al,\be,r} = \funcparam^r \ineg{\al,\be,0}$ in \eqref{eqn:inner_neg_system_of_equations_recurrence} and dividing by $\funcparam^{r - 1}\ineg{\al,\be,0}$, yielding
\begin{equation}%
s \funcparam^2 + (\be - (1 + s)(\rho + 1))\funcparam + (1 + s)\rho = 0. \label{eqn:inner_neg_funcparam_satisfies}
\end{equation}%
The constants $c_{\scriptscriptstyle +}$ and $c_{\scriptscriptstyle -}$ follow from \eqref{eqn:inner_neg_system_of_equations_boundary} and the simplification follows from the fact that $\ineg{\al,\be,r} = \funcparam^r \ineg{\al,\be,0}$ satisfies \eqref{eqn:inner_neg_funcparam_satisfies}.
\end{proof}%

\begin{remark}[Normalized eigenvectors]%
Without loss of generality we henceforth set the first elements $\ipos{\al,\be,0} = \ineg{\al,\be,0} = 1$ for all $\al$ and $\be$. Since normalization of the equilibrium distribution follows afterwards, one can arbitrarily select the value of the first element of the eigenvectors $\ibpos{\al,\be}$ and $\ibneg{\al,\be}$.
\end{remark}%

We wish to determine the $\al$'s and $\be$'s with $0 < |\al|,|\be| < 1$, for which \eqref{eqn:solution_inner_pos_should_satisfy} and \eqref{eqn:solution_inner_neg_should_satisfy} have non-zero solutions $\ibpos{\al,\be}$ and $\ibneg{\al,\be}$, respectively. Equivalently, we determine $\al$'s and $\be$'s for which $\det(\Dpos{\al,\be}) = 0$ or $\det(\Dneg{\al,\be}) = 0$. We first investigate the location and the number of zeros of $\det(\Dpos{\al,\be}) = 0$.

\begin{lemma}\label{lem:location_number_zeros_inner_pos}%
The equation $\det(\Dpos{\al,\be}) = 0$ assumes the form
\begin{equation}%
\bigl( \al\be(1 + s)(\rho + 1) - \be^2(1 + s)\rho - \al^2 \bigr)^s - \be (\al\be s)^s = 0 \label{eqn:determinant_inner_pos}
\end{equation}%
and can be rewritten to
\begin{equation}%
\frac{\al\be(1 + s)(\rho + 1) - \be^2(1 + s)\rho - \al^2}{\al \be s} = \unit_i \be^{1/s}, \quad i = 1,2,\ldots,s, \label{eqn:determinant_inner_pos_s-th_root}
\end{equation}%
where $\unit_i$ is the $i$-th root of unity of $\unit^s = 1$ and $\be^{1/s}$ is the principal root.
\begin{enumerate}[label = \textup{(\roman*)}]%
\item For every $\al$ with $|\al| \in (0,1)$, equation \eqref{eqn:determinant_inner_pos_s-th_root} has exactly one root $\be_i$ inside the open circle of radius $|\al|$ for each $i$. Furthermore, all $s$ roots $\be_i$ are distinct.
\item For every $\be$ with $|\be| \in (0,1)$, equation \eqref{eqn:determinant_inner_pos_s-th_root} has exactly one root $\al_i$ inside the open circle of radius $|\be|$ for each $i$. Furthermore, all $s$ roots $\al_i$ are distinct.
\end{enumerate}%
\end{lemma}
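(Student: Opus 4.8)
The plan is to establish the two displayed forms of the lemma by computing $\det(\Dpos{\al,\be})$ directly, then to count the zeros inside the relevant disk with Rouché's theorem, and finally to upgrade that count to ``exactly one per branch'' by a perturbation argument.

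For the first step I would substitute $\A{0,0}=-(1+s)(\rho+1)\I+s\Lo^T$, $\A{0,-1}=s\M{s-1,0}$, $\A{-1,1}=\I$ and $\A{1,-1}=(1+s)\rho\I$ into $\Dpos{\al,\be}=\al\be\A{0,0}+\al\be^2\A{0,-1}+\al^2\A{-1,1}+\be^2\A{1,-1}$ and collect the scalar-matrix terms, which yields $\Dpos{\al,\be}=a\,\I+\al\be s\,\Lo^T+\al\be^2 s\,\M{s-1,0}$ with $a\defi\al^2-\al\be(1+s)(\rho+1)+\be^2(1+s)\rho$ (so that $-a/(\al\be s)$ is exactly the ratio appearing in \eqref{eqn:inner_pos_eigenvector}). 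This matrix carries $a$ on the diagonal, $\al\be s$ on the super-diagonal and the single entry $\al\be^2 s$ in position $(s-1,0)$; every other entry is zero, so in the permutation expansion of the determinant only the identity and the full $s$-cycle $j\mapsto j+1$ contribute, giving $\det(\Dpos{\al,\be})=a^s+(-1)^{s-1}\be(\al\be s)^s$. Multiplying $\det(\Dpos{\al,\be})=0$ by $(-1)^s$ turns it into $(-a)^s-\be(\al\be s)^s=0$, which is \eqref{eqn:determinant_inner_pos}; dividing by the nonzero quantity $(\al\be s)^s$ and extracting $s$-th roots gives \eqref{eqn:determinant_inner_pos_s-th_root}.

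The algebraic fact that drives the counting is the factorisation $a=(\al-p\be)(\al-q\be)$, where $p,q$ solve $t^2-(1+s)(\rho+1)t+(1+s)\rho=0$; since this quadratic takes the value $-s$ at $t=1$ (and has positive discriminant for $\rho<1$) one gets $0<q<1<p$, and from $p+q=(1+s)(\rho+1)$, $pq=(1+s)\rho$ the crucial identity $(p-1)(1-q)=s$. Write \eqref{eqn:determinant_inner_pos} as $P(\be)\defi(-a)^s-(\al s)^s\be^{s+1}=0$, a polynomial of degree $2s$ with $P(0)=(-\al^2)^s\neq0$, and substitute $\be=\al w$; using $-a(\al w)=-\al^2(1-pw)(1-qw)$ this becomes $\widetilde P(w)\defi\bigl(-(1-pw)(1-qw)\bigr)^s-\al s^s w^{s+1}=0$. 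On $|w|=1$ one has $|1-pw|\ge p-1$ and $|1-qw|\ge1-q$, hence $\bigl|\,(-(1-pw)(1-qw))^s\,\bigr|\ge((p-1)(1-q))^s=s^s$, while $|\al s^s w^{s+1}|=|\al|s^s<s^s$ because $|\al|<1$; so Rouché's theorem applies with dominant term $(-(1-pw)(1-qw))^s$, whose only zero in $|w|<1$ is $w=1/p$ of multiplicity $s$. Thus $\widetilde P$, and so $P$, has exactly $s$ zeros in $|\be|<|\al|$ and none on the boundary. Part~(ii) is the same argument with $\al$ and $\be$ interchanged: for fixed $\be$ with $|\be|\in(0,1)$, $a$ viewed as a quadratic in $\al$ has roots $p\be$ and $q\be$ with $|q\be|<|\be|<|p\be|$, and the identical estimate on $|\al|=|\be|$ gives exactly $s$ zeros of $(-a)^s-(\al s)^s\be^{s+1}$ in $|\al|<|\be|$.

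It remains to prove simplicity and the branch labelling, and this is the real obstacle. At any zero $\be^*$ of $P$ we have $\be^*\neq0$ and $a(\be^*)\neq0$ (else $P(\be^*)=-(\al s)^s(\be^*)^{s+1}$ would force $\be^*=0$), so $-a(\be^*)\big/\bigl(\al s\,\be^*(\be^*)^{1/s}\bigr)$ is a well-defined $s$-th root of unity, which attaches to $\be^*$ a unique branch index $i$; since there are $s$ branches and $s$ zeros counted with multiplicity, it is enough to show that each branch carries at least one zero. I would do this by a homotopy: for $P_t(\be)\defi(-a)^s-t(\al s)^s\be^{s+1}$, $t\in[0,1]$, the same Rouché estimate shows $P_t$ never vanishes on $|\be|=|\al|$, so it has exactly $s$ zeros in the disk for every $t$, depending continuously on $t$ and staying away from $\be=0$ and from the zeros of $a$, and at $t=0$ they merge into the multiplicity-$s$ zero $\be=\al/p$ of $(-a)^s$. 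A Puiseux expansion at $\be=\al/p$ shows that for small $t>0$ the $s$ zeros are $\be=\al/p+C\,t^{1/s}\zeta_k+o(t^{1/s})$ with $C$ fixed and $\zeta_k$ running through the $s$-th roots of unity, and a short computation shows the $t$-normalised branch index $-a(\be)\big/\bigl(\al s\,\be(\be)^{1/s}t^{1/s}\bigr)$ of the $k$-th zero equals $\zeta_k$; hence near $t=0$ all $s$ branches are hit exactly once. Two zeros on different branches cannot collide as $t$ grows, because their branch indices are locally constant along the trajectories and would have to agree at a collision, so the one-per-branch property, hence simplicity, persists all the way to $t=1$ — which is the assertion; part~(ii) is handled the same way, deforming in $\al$ with the merged zero at $\al=q\be$. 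Rouché alone only delivers the total $s$, and separating it into the $s$ branches requires this continuity analysis; the one delicate point is the branch cut of $\be^{1/s}$, which one dispatches by checking the relevant zeros avoid it (for $\al>0$ real, for instance, $P$ has no negative real zero, by a sign count). An alternative route to simplicity is a discriminant computation: $P(\be^*)=P'(\be^*)=0$ forces $\be^*/\al$ to satisfy an explicit quadratic independent of $\al$ and then pins $\al$ down to at most two explicit values, which one checks have modulus $\ge1$.
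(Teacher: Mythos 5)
Your determinant computation, your Rouch\'e count, and your discriminant argument for simplicity all match the paper's proof in substance: the factorisation $a=(\al-p\be)(\al-q\be)$ with $(p-1)(1-q)=s$ is just a cleaner packaging of the paper's estimate $|f(z)|\ge s$ on $|z|=1$ (with $p=1/\Lal$, $q=1/\Lbe$ in the paper's notation), and your ``alternative route to simplicity'' via $P=P'=0$ is exactly the paper's step (i)(b). You have also correctly isolated the real difficulty: Rouch\'e only counts the $2s$ roots of the degree-$2s$ polynomial \eqref{eqn:determinant_inner_pos}, and the content of the lemma is that these split one per branch of \eqref{eqn:determinant_inner_pos_s-th_root}. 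Where you diverge is in how you resolve this: the paper proves ``at least one root per branch'' by inverting $\sigma=f(z)$ with the Lagrange inversion theorem, showing the inverse has positive Taylor coefficients, and invoking Pringsheim's theorem to push its radius of convergence past $1$, so that $z=g(\unit_i\al^{1/s})$ is an explicit in-disk root for every $i$ (Appendix~\ref{app:proof_single_root_beta_i}); your homotopy $P_t(\be)=(-a)^s-t(\al s)^s\be^{s+1}$ with a Puiseux expansion at $t=0^{+}$ is a genuinely different idea.

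The gap is in that homotopy, and it is precisely the point you flag and then dismiss too quickly. Your argument needs the principal-root branch label $-a(\be)/\bigl(\al s\,\be\,(t\be)^{1/s}\bigr)$ to be constant along each zero trajectory, but it is only locally constant away from the branch cut: if a trajectory $\be_k(t)$ crosses the negative real axis, the principal root $\be_k^{1/s}$ jumps by a factor $e^{\pm 2\pi i/s}$ and the label jumps to an adjacent root of unity, so two trajectories that start on different branches near $t=0$ can perfectly well end on the same branch at $t=1$. Your sign count rules out negative real zeros only for real positive $\al$, whereas the lemma must hold for every complex $\al$ with $|\al|\in(0,1)$ --- and this generality is genuinely used, since the compensation approach feeds the complex parameters $\al_{l,i}$ (non-real for $s\ge 3$, the $\be_i$'s being tied to the complex roots of unity $\unit_i$) back into this lemma. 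For such $\al$ nothing keeps the trajectories off the cut, and I do not see a cheap repair; this is exactly why the paper resorts to the Lagrange-inversion/Pringsheim argument. (By contrast, your treatment of part (ii) is sound but harder than necessary: for fixed $\be$ each branch equation in \eqref{eqn:determinant_inner_pos_s-th_root} is an honest quadratic in $\al$ --- no fractional power of the unknown --- so the paper simply applies Rouch\'e to each branch separately and reads off one root per branch, with distinctness immediate because equality of two branches would force $\be^{1/s}=0$.)
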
%

\begin{proof}%
Dividing \eqref{eqn:determinant_inner_pos} by $(\al\be s)^s$ and taking the $s$-th root reduces \eqref{eqn:determinant_inner_pos} to \eqref{eqn:determinant_inner_pos_s-th_root}.

\noindent (i) The proof consists of three main steps:
\begin{enumerate}[label = (\alph*)]%
\item For every fixed $\al$ with $|\al| \in (0,1)$, equation \eqref{eqn:determinant_inner_pos} has exactly $s$ roots $\be$ inside the open circle of radius $|\al|$.
\item These $s$ roots $\be$ are distinct.
\item For every fixed $\al$ with $|\al| \in (0,1)$, equation \eqref{eqn:determinant_inner_pos_s-th_root} has at least one root $\be_i$ inside the open circle of radius $|\al|$ for each $i$.
\end{enumerate}%
Combining these three steps proves that for every fixed $\al$ with $|\al| \in (0,1)$, equation \eqref{eqn:determinant_inner_pos_s-th_root} has at exactly one root $\be_i$ inside the open circle of radius $|\al|$ for each $i$ and the $\be_i$'s are distinct.

\noindent (i)(a) Equation \eqref{eqn:determinant_inner_pos} is a polynomial of degree $2s$ in $\be$ and we will show that exactly $s$ roots are inside the open circle of radius $|\al|$. Other possible roots inside the open unit circle appear not to be useful, since these will produce a divergent solution, as follows from \eqref{eqn:initial_product_form_condition}. Divide both sides of \eqref{eqn:determinant_inner_pos} by $\al^{2s}$ and set $z = \be/\al$ to obtain
\begin{equation}%
f(z)^s - \al s^s z^{s + 1} = 0 \label{eqn:determinant_inner_pos_rewritten}
\end{equation}%
with $f(z) = (1 + s)(\rho + 1)z - (1 + s)\rho z^2 - 1$. Then $f(z)$ has the two roots
\begin{equation}%
\Lrootpos_{\scriptscriptstyle \pm} = \frac{\rho + 1 \pm \sqrt{(\rho + 1)^2 - 4\rho/(1 + s)}}{2\rho}. \label{eqn:limiting_roots_pos}
\end{equation}%
Observe that, for $0 < \rho < 1$, $\Lrootpos_{\scriptscriptstyle +} > 1$ and $0 < \Lrootpos_{\scriptscriptstyle -} < 1$. Furthermore, for $|z| = 1$ one establishes
\begin{align}
|f(z)|^s &= | (1 + s)(\rho + 1)z - (1 + s)\rho z^2 - 1 |^s \notag \\
&\ge | (1 + s)(\rho + 1)|z| - (1 + s)\rho |z|^2 - 1 |^s = s^s > |\alpha|s^s.
\end{align}
Hence, by Rouch\'e's theorem (see e.g.~\cite[Theorem~9.3.2]{Rouche_Hille1959}), equation \eqref{eqn:determinant_inner_pos_rewritten} has exactly $s$ roots inside the open unit circle. Thus, \eqref{eqn:determinant_inner_pos} has for each fixed $|\al| \in (0,1)$ exactly $s$ roots $\be$ inside the open circle of radius $|\al|$.

\noindent (i)(b) Label the left-hand side of \eqref{eqn:determinant_inner_pos_rewritten} as $g(z)$. For a root to be of (at least) multiplicity two, it must hold that $g(z) = g'(z) = 0$, which gives
\begin{equation}%
(\rho + 1)z + (s - 1)\rho z^2 - 1 = 0
\end{equation}%
with solutions
\begin{equation}%
\omega_{\scriptscriptstyle \pm} = \frac{-(\rho + 1) \pm \sqrt{(\rho + 1)^2 + 4(s - 1)\rho}}{2(s - 1)\rho}.
\end{equation}%
Note that for $0 < \rho < 1$, $\omega_{\scriptscriptstyle -} < 0$, $0 < \omega_{\scriptscriptstyle +} < 1$ and $\omega_{\scriptscriptstyle +}$ is a monotonically decreasing function of $\rho$ with $\omega_{\scriptscriptstyle +} = 1$ for $\rho = 0$ and $\omega_{\scriptscriptstyle +} = (\sqrt{s}-1)/(s-1)$ for $\rho = 1$. For $z = \omega_{\scriptscriptstyle +}$ equation \eqref{eqn:determinant_inner_pos_rewritten} reveals the contradiction
\begin{equation}%
\al = \frac{f(\omega_{\scriptscriptstyle +})^s}{s^s \omega_{\scriptscriptstyle +}^{s+1}} = \frac{\bigl( 1 + \rho (1 - 2 \omega_{\scriptscriptstyle +}) \bigr)^s}{\omega_{\scriptscriptstyle +}} \ge 1.
\end{equation}%
Thus, the $s$ roots $\be$ inside the open circle of radius $|\al|$ are distinct.

\noindent (i)(c) This step is presented in Appendix~\ref{app:proof_single_root_beta_i}. The proof was communicated to us by A.J.E.M. Janssen.

\noindent (ii) Set $z = \al/\be$, multiply \eqref{eqn:determinant_inner_pos_s-th_root} by $zs$ and rearrange to obtain
\begin{equation}%
z \bigl( (1 + s)(\rho + 1) - \unit_i \be^{1/s} s \bigr) = z^2 + (1 + s)\rho.
\end{equation}%
Label the left-hand side as $f(z)$ and the right-hand side as $g(z)$. Note that $f(z)$ has a single root within the unit circle. For $|z| = 1$ one establishes
\begin{align}%
|f(z)| &= | z \bigl( (1 + s)(\rho + 1) - \unit_i \be^{1/s} s \bigr)| = | (1 + s)(\rho + 1) - \unit_i \be^{1/s}s | \notag \\
&\ge  (1 + s)(\rho + 1) - |\unit_i| |\be^{1/s}| s = (1 + s)\rho + 1 + s(1 - |\be^{1/s}|) \notag \\
&> (1 + s)\rho + 1 \ge | z^2 + (1 + s)\rho | = |g(z)|.
\end{align}%
Hence, by Rouch\'e's theorem we establish that \eqref{eqn:determinant_inner_pos_s-th_root} has exactly one root $\al_i$ inside the open circle of radius $|\be|$ for each $i$. These roots $\al_i$ are distinct, since the $\unit_i$ are distinct.
\end{proof}%

\begin{remark}[Identical eigenvectors]\label{rem:identical_eigenvectors}%
We note that for fixed $\be$ with $|\be| \in (0,1)$ and fixed $i$, \eqref{eqn:determinant_inner_pos_s-th_root} is a quadratic equation in $\al$ and has two solutions, say $\al_{\scriptscriptstyle +}$ and $\al_{\scriptscriptstyle -}$, satisfying $|\al_{\scriptscriptstyle -}| < |\be| < |\al_{\scriptscriptstyle +}|$. Then, combining \eqref{eqn:inner_pos_eigenvector} and \eqref{eqn:determinant_inner_pos_s-th_root} we have the property that $\ibpos{\al_{\scriptscriptstyle +},\be} = \ibpos{\al_{\scriptscriptstyle -},\be}$, which we will use later.
\end{remark}%

We next provide information on the location and the number of zeros of $\det(\Dneg{\al,\be}) = 0$. The polynomial form of $\det(\Dneg{\al,\be}) = 0$ is too complicated for application of Rouch\'e's theorem. Therefore we apply the following matrix variant of Rouch\'e's theorem.

\begin{theorem}[de Smit \cite{MatrixRouche_deSmit1983}]\label{thm:deSmit}%
Let $A(z) = (a_{i,j}(z))$ and $B(z) = (b_{i,j}(z))$ be complex $n \times n$ matrices, where $B(z)$ is diagonal. The elements $a_{i,j}(z)$ and $b_{i,j}(z)$, $0 \le i,j \le n - 1$ are meromorphic functions in a simply connected region $\mathcal{S}$ in which $\mathcal{T}$ is the set of all poles of these functions. $\mathcal{C}$ is a rectifiable closed Jordan curve in $\mathcal{S} \setminus \mathcal{T}$. Let $x_B$ and $x_{A + B}$ be the number of zeros inside $\mathcal{C}$ of $\det(B(z))$ and $\det(A(z) + B(z))$, respectively, and $y_B$ and $y_{A + B}$ the number of poles inside $\mathcal{C}$ of $\det(B(z))$ and $\det(A(z) + B(z))$ (zeros and poles of higher order are counted according to this order). If $|b_{i,i}(z)| > \sum_{j = 0}^{n-1} |a_{i,j}(z)|$ on $\mathcal{C}$ for all $i = 0,1,\ldots,n - 1$, then
\begin{equation}%
x_{A + B} - y_{A + B} = x_B - y_B.
\end{equation}%
\end{theorem}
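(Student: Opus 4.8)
The plan is to reduce the statement to the classical argument principle combined with homotopy invariance of the winding number, the only genuine analytic input being a Gershgorin-type diagonal-dominance estimate. First I would observe that, since the entries $a_{i,j}(z)$ and $b_{i,j}(z)$ are meromorphic on $\mathcal{S}$ and a determinant is a finite polynomial expression in the matrix entries, both $\det(B(z))$ and $\det(A(z)+B(z))$ are meromorphic on $\mathcal{S}$; moreover neither is identically zero, since (as the estimate below shows) both are nonzero on $\mathcal{C}$. Hence their zeros and poles inside $\mathcal{C}$ are isolated and finite in number, and the argument principle applies: $x_B-y_B$ and $x_{A+B}-y_{A+B}$ equal, respectively, the winding numbers about the origin of the maps $z\mapsto\det(B(z))$ and $z\mapsto\det(A(z)+B(z))$ as $z$ traverses $\mathcal{C}$. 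Because $B(z)$ is diagonal, $\det(B(z))=\prod_i b_{i,i}(z)$, so counting zeros and poles of $\det(B)$ reduces to counting those of the individual $b_{i,i}$; this is the only place the diagonality hypothesis is needed.

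Next I would introduce the homotopy $H(z,t)\defi\det\bigl(tA(z)+B(z)\bigr)$ for $t\in[0,1]$ and $z\in\mathcal{C}$. Since $\mathcal{C}\subset\mathcal{S}\setminus\mathcal{T}$, all entries are finite on $\mathcal{C}$, so $H$ is continuous on the compact set $\mathcal{C}\times[0,1]$. The crucial claim is that $H$ is zero-free there. Fix $z\in\mathcal{C}$ and $t\in[0,1]$ and set $M=tA(z)+B(z)$, whose off-diagonal entries are $t\,a_{i,j}(z)$ and whose diagonal entries are $b_{i,i}(z)+t\,a_{i,i}(z)$. Then, using $t\le 1$ and the hypothesis $|b_{i,i}(z)|>\sum_{j}|a_{i,j}(z)|$,
\begin{equation}
|M_{i,i}|\;\ge\;|b_{i,i}(z)|-t\,|a_{i,i}(z)|\;\ge\;|b_{i,i}(z)|-|a_{i,i}(z)|\;>\;\sum_{j\neq i}|a_{i,j}(z)|\;\ge\;\sum_{j\neq i}|M_{i,j}|,
\end{equation}
so $M$ is strictly row-diagonally dominant and therefore nonsingular by the Levy--Desplanques theorem. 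Hence $H(z,t)\neq 0$ for all $(z,t)\in\mathcal{C}\times[0,1]$.

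Given that $H$ is a continuous, nowhere-vanishing function on $\mathcal{C}\times[0,1]$, homotopy invariance of the winding number implies that the winding number about the origin of $z\mapsto H(z,t)$ is independent of $t$. At $t=0$ this is the winding number of $z\mapsto\det(B(z))=\prod_i b_{i,i}(z)$, which by the argument principle equals $x_B-y_B$; at $t=1$ it is the winding number of $z\mapsto\det(A(z)+B(z))$, which equals $x_{A+B}-y_{A+B}$. Equating the two gives $x_{A+B}-y_{A+B}=x_B-y_B$, which is the assertion.

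I expect the only real obstacle to be the zero-freeness of the homotopy. The point to recognize is that the hypothesis is exactly strict row-diagonal dominance of $A(z)+B(z)$ at every $z\in\mathcal{C}$, and that this property is preserved when the off-diagonal part (and the diagonal correction) is scaled by any $t\in[0,1]$ — the bound $|b_{i,i}|-|a_{i,i}|$ on the diagonal survives while the off-diagonal sum only shrinks. Once this is in place, everything else is routine bookkeeping with the argument principle, and one should only take minor care that $\det(A+B)$ and $\det(B)$ are genuinely meromorphic and not identically zero so that the argument principle is legitimately applicable.
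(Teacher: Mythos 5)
This is a borrowed result: the paper states it as Theorem~\ref{thm:deSmit} with a citation to de Smit and gives no proof, so there is nothing in the paper to compare against line by line. Your argument is correct and is essentially the standard proof of this matrix Rouch\'e theorem: the homotopy $H(z,t)=\det(tA(z)+B(z))$ is zero-free on $\mathcal{C}\times[0,1]$ because $tA(z)+B(z)$ remains strictly row-diagonally dominant (Levy--Desplanques), and homotopy invariance of the winding number combined with the argument principle for meromorphic functions transfers $x_B-y_B$ to $x_{A+B}-y_{A+B}$; simple connectivity of $\mathcal{S}$ is what guarantees the interior of the Jordan curve $\mathcal{C}$ lies in $\mathcal{S}$, so the argument principle is applicable. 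One small inaccuracy: you claim the diagonality of $B$ is needed \emph{only} to factor $\det(B)=\prod_i b_{i,i}$ (a factorization your proof does not actually require), whereas it is in fact essential in the dominance estimate itself, since you use $M_{i,j}=t\,a_{i,j}(z)$ for $j\neq i$, which holds only because $b_{i,j}=0$ off the diagonal. This does not affect the validity of the proof.
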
%

\newpage
\begin{lemma}\label{lem:location_number_zeros_inner_neg}\hspace*{1em}%
\begin{enumerate}[label = \textup{(\roman*)}]%
\item For every $\al$ with $|\al| \in (0,1)$, the equation $\det(\Dneg{\al,\be}) = 0$ can be rewritten to
    \begin{equation}%
    \al^2 s^s + \be^2 ((1 + s)\rho)^s - \al \be s^s ( \func{+}{\be}^s + \func{-}{\be}^s ) = 0, \label{eqn:determinant_inner_neg}
    \end{equation}%
    where $\func{\pm}{\be}$ is given in \eqref{eqn:func_roots_eigenvectors}, and has exactly one root $\be$ in the open circle of radius $|\al|$.
\item For every $\be$ with $|\be| \in (0,1)$, equation \eqref{eqn:determinant_inner_neg} has exactly one root $\al$ in the open circle of radius $|\be|$.
\end{enumerate}%
\end{lemma}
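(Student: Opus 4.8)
\emph{Proof idea.} The difficulty the authors point to just before Theorem~\ref{thm:deSmit} is genuine: because of the two corner contributions $\al^{2}s\,\M{s-1,0}$ and $\be^{2}(1+s)\rho\,\M{0,s-1}$, the polynomial $\det(\Dneg{\al,\be})$ has degree $2s$ in $\be$, even though only one of its roots in the relevant region matters. My plan is therefore in two parts: (a) distil the reduced equation \eqref{eqn:determinant_inner_neg} from $\det(\Dneg{\al,\be})$ by splitting off a spurious factor $(\al\be)^{s-1}$; and (b) count roots, using de Smit's theorem (Theorem~\ref{thm:deSmit}) for part~(i) and ordinary Rouch\'e's theorem for part~(ii).

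For (a) I would exploit the recurrence structure already isolated in Lemma~\ref{lem:solution_inner_equations}(ii). The matrix $\Dneg{\al,\be}$ is cyclic tridiagonal, with diagonal $\al\be(\be-(1+s)(\rho+1))$, superdiagonal $\al\be s$, subdiagonal $\al\be(1+s)\rho$ and the two corners above, so for $\al,\be\neq0$ every null vector is of the form $\ineg{\al,\be,r}=c_{+}\func{+}{\be}^{r}+c_{-}\func{-}{\be}^{r}$ with $\func{\pm}{\be}$ the two roots of \eqref{eqn:inner_neg_funcparam_satisfies} (distinct on the disks of interest, see the last paragraph). Inserting this into the equation \eqref{eqn:inner_neg_system_of_equations_boundary} \emph{and} into the last row of $\Dneg{\al,\be}$ (the one tying $\ineg{\al,\be,s-1}$ back to $\ineg{\al,\be,0}$ through the corner $\al^{2}s$), and simplifying with \eqref{eqn:inner_neg_funcparam_satisfies} and the identity $\Func{\al,\be,\funcparam}=(1+s)\rho\,\funcparam^{-1}\bigl(\tfrac{\be}{\al}\funcparam^{s}-1\bigr)$ from \eqref{eqn:Func_roots_eigenvectors}, these two equations collapse to $c_{+}\Func{\al,\be,\func{+}{\be}}+c_{-}\Func{\al,\be,\func{-}{\be}}=0$ and $c_{+}\func{+}{\be}\Func{\al,\be,\func{+}{\be}}+c_{-}\func{-}{\be}\Func{\al,\be,\func{-}{\be}}=0$, whose $2\times2$ determinant is $(\func{-}{\be}-\func{+}{\be})\Func{\al,\be,\func{+}{\be}}\Func{\al,\be,\func{-}{\be}}$. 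Since $\Func{\al,\be,\funcparam}=0$ iff $\be\funcparam^{s}=\al$ and $\func{+}{\be}\func{-}{\be}=(1+s)\rho/s$ by Vieta's formulas, a nonzero $(c_{+},c_{-})$ exists iff $(\al-\be\func{+}{\be}^{s})(\al-\be\func{-}{\be}^{s})=0$, which after expanding and multiplying by $s^{s}$ is exactly \eqref{eqn:determinant_inner_neg}. Equivalently, and more convenient for the multiplicity bookkeeping below, a direct expansion of the cyclic tridiagonal determinant (simplified with \eqref{eqn:inner_neg_funcparam_satisfies}) gives $\det(\Dneg{\al,\be})=(-1)^{s+1}(\al\be)^{s-1}\bigl(\al^{2}s^{s}+\be^{2}((1+s)\rho)^{s}-\al\be s^{s}(\func{+}{\be}^{s}+\func{-}{\be}^{s})\bigr)$, where $\func{+}{\be}^{s}+\func{-}{\be}^{s}$ is a polynomial of degree $s$ in $\be$. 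I expect (a) to be the main piece of work; the degenerate small cases $s=1,2$, where the corners merge with the off-diagonals, I would verify by hand.

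For part~(i), fix $\al$ with $|\al|\in(0,1)$ and apply Theorem~\ref{thm:deSmit} on the circle $\mathcal{C}=\{\be\in\Complex:|\be|=|\al|\}$ with $B(\be)$ the diagonal part of $\Dneg{\al,\be}$ and $A(\be)$ the off-diagonal part (for $s=1$ there is nothing off-diagonal and $\det(\Dneg{\al,\be})$ is a quadratic in $\be$ that ordinary Rouch\'e handles directly). The key observation is that on $\mathcal{C}$, where $|\be|^{2}=|\al\be|=|\al|^{2}$, the diagonal-dominance test $|b_{i,i}(\be)|>\sum_{j}|a_{i,j}(\be)|$ is, for every row $i$, implied by the single inequality $|\be-(1+s)(\rho+1)|>s+(1+s)\rho$, which holds because $|\be-(1+s)(\rho+1)|\ge(1+s)(\rho+1)-|\al|>(1+s)(\rho+1)-1=s+(1+s)\rho$. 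As all entries here are polynomial (hence pole-free), de Smit's theorem gives that $\det(\Dneg{\al,\be})$ has the same number of zeros inside $\mathcal{C}$ as $\det(B(\be))=\bigl(\al\be(\be-(1+s)(\rho+1))\bigr)^{s}$, namely $s$, all coming from the factor $\be^{s}$ (the zero $\be=(1+s)(\rho+1)>1$ lies outside $\mathcal{C}$ and $\al\neq0$). By (a) exactly $s-1$ of these sit at $\be=0$, and $\be=0$ is not a root of \eqref{eqn:determinant_inner_neg} (its value there is $\al^{2}s^{s}\neq0$); hence \eqref{eqn:determinant_inner_neg} has exactly one root $\be$, with $0<|\be|<|\al|$.

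Part~(ii) follows quickly. For fixed $\be$ with $|\be|\in(0,1)$, equation \eqref{eqn:determinant_inner_neg} is a quadratic in $\al$ with leading coefficient $s^{s}\neq0$, and by the factorization in (a) its two roots are $\al=\be\func{+}{\be}^{s}$ and $\al=\be\func{-}{\be}^{s}$; so it is enough to show that exactly one of $|\func{+}{\be}|,|\func{-}{\be}|$ is $<1$. Applying ordinary Rouch\'e to $g(t)=st^{2}+(\be-(1+s)(\rho+1))t+(1+s)\rho$ on $|t|=1$ with the split $g(t)=\bigl[(\be-(1+s)(\rho+1))t\bigr]+\bigl[st^{2}+(1+s)\rho\bigr]$, one has $|(\be-(1+s)(\rho+1))t|=|\be-(1+s)(\rho+1)|\ge(1+s)(\rho+1)-|\be|>(1+s)\rho+s\ge|st^{2}+(1+s)\rho|$ on the circle, so $g$ has exactly one zero in $|t|<1$ and none on $|t|=1$; thus $|\func{+}{\be}|<1<|\func{-}{\be}|$ after relabelling, the two roots $\be\func{\pm}{\be}^{s}$ are distinct, and exactly one lies in $|\al|<|\be|$. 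Finally, the discriminant $(\be-(1+s)(\rho+1))^{2}-4s(1+s)\rho$ of \eqref{eqn:func_roots_eigenvectors} vanishes only at the two real points $\be=(1+s)(\rho+1)\pm2\sqrt{s(1+s)\rho}$, both $\ge1$ (indeed $(1+s)(\rho+1)-2\sqrt{s(1+s)\rho}=\bigl(\sqrt{(1+s)\rho}-\sqrt{s}\bigr)^{2}+1$), so $\func{+}{\be}\neq\func{-}{\be}$ on the closed disk $|\be|\le|\al|<1$, which legitimises the transfer-matrix representation used in (a) and the relabellings above.
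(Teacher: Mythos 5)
Your proposal is correct, and it overlaps with the paper's proof in its core root-counting step while differing in two places worth noting. For part (i) you and the paper do essentially the same thing: split $\Dneg{\al,\be}$ into its diagonal and off-diagonal parts, check the row-dominance condition $|\be-(1+s)(\rho+1)|>s+(1+s)\rho$ on the circle $|\be|=|\al|$ (the paper first substitutes $z=\be/\al$ and works on the unit circle, which is only cosmetic), apply Theorem~\ref{thm:deSmit} to get $s$ zeros, and peel off the factor $(\al\be)^{s-1}$ to leave one nonzero root of \eqref{eqn:determinant_inner_neg}. Where you genuinely diverge is in how \eqref{eqn:determinant_inner_neg} is obtained and in part (ii). The paper derives \eqref{eqn:determinant_inner_neg} by expanding the cyclic tridiagonal determinant and recognizing the Waring formula for $\func{+}{\be}^s+\func{-}{\be}^s$; you instead run the transfer-matrix/null-vector argument, reducing solvability to the $2\times2$ system in $(c_+,c_-)$ and the factorization $s^s(\al-\be\func{+}{\be}^s)(\al-\be\func{-}{\be}^s)=0$ via \eqref{eqn:Func_roots_eigenvectors} and Vieta. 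This is a legitimate alternative, with the caveat you already flag: for the multiplicity bookkeeping inside de Smit's count you still need the polynomial identity $\det(\Dneg{\al,\be})=(-1)^{s-1}(\al\be)^{s-1}\bigl(\al^2s^s+\be^2((1+s)\rho)^s-\al\be s^s(\func{+}{\be}^s+\func{-}{\be}^s)\bigr)$, and the cleanest way to certify that is still the direct expansion the paper performs, so the Waring-type computation is not really avoided. For part (ii) the paper simply says the argument is symmetric in $\al$ and $\be$; your route is different and more informative: for fixed $\be$ the reduced equation is a quadratic in $\al$ with the explicit roots $\be\func{\pm}{\be}^s$, and a scalar Rouch\'e argument on \eqref{eqn:inner_neg_funcparam_satisfies} (with the same dominance inequality) shows $|\func{+}{\be}|$ and $|\func{-}{\be}|$ straddle $1$, so exactly one root lies in $|\al|<|\be|$. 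This buys you an explicit description of which root is selected, and your observation that the discriminant zeros equal $\bigl(\sqrt{(1+s)\rho}-\sqrt{s}\bigr)^2+1\ge 1$ correctly disposes of the degenerate case $\func{+}{\be}=\func{-}{\be}$ on the relevant disk. No gaps.
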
%

\begin{proof}%
(i) Setting $z = \be/\al$, we observe that
\begin{equation}%
\Dneg{\al,\be} = \al^2 (A(z) + B(z))
\end{equation}%
with
\begin{align}%
A(z) &= (1 + s) \rho z \Lo + s z \Lo^T + (1 + s)\rho z^2 \M{0,s-1} + s \M{s - 1,0}, \\
B(z) &= z ( \al z - (1 + s)(\rho + 1)) \I.
\end{align}%
Hence, $\det(\Dneg{\al,\be}) = \al^{2s} \det(A(z) + B(z))$. Let us take $\mathcal{C}$ to be the unit circle. Furthermore, we can verify that, for $|\al| \in (0,1)$ and $|z| = 1$,
\begin{equation}%
|b_{i,i}(z)| = |(1 + s)(\rho + 1) - \al z| > (1 + s)\rho + s = \sum_{j = 0}^{s - 1} |a_{i,j}(z)|, \quad i = 0,1,\ldots,s - 1
\end{equation}%
and the number of zeros and poles inside $\mathcal{C}$ of $\det(B(z))$ is $x_B = s$ and $y_B = 0$, respectively. The number of poles of $\det(A(z) + B(z))$ inside $\mathcal{C}$ is $y_{A + B} = 0$. By Theorem~\ref{thm:deSmit} we derive that $\det(A(z) + B(z))$ has exactly $s$ roots inside $\mathcal{C}$. Moreover,
\begin{align}%
\det(A(z) + B(z)) &= (-z)^{s - 1} \Bigl( s^s + z^2 ((1 + s)\rho)^s \notag \\
&\quad - z \sum_{i = 0}^{\lfloor s/2 \rfloor} (-1)^i \frac{s}{s - i} \binom{s - i}{i} (s(1 + s)\rho)^i ((1 + s)(\rho + 1) - \al z)^{s - 2i} \Bigr). \label{eqn:determinant_inner_neg_ratio}
\end{align}%
So, $\det(A(z) + B(z))$ has the root 0 of multiplicity $s - 1$ and exactly one non-zero root inside $\mathcal{C}$. The summation in \eqref{eqn:determinant_inner_neg_ratio} is known as the Waring formula. By using the following identity:
\begin{equation}%
x^s + y^s = \sum_{i = 0}^{\lfloor s/2 \rfloor} (-1)^i \frac{s}{s - i} \binom{s - i}{i} (xy)^i (x + y)^{s - 2i},
\end{equation}%
we can simplify the Waring formula, see e.g.~\cite[equation~(1)]{Waring_Gould1999}. So, we set $xy = s(1 + s)\rho$ and $x + y = (1 + s)(\rho + 1) - \al z$. Recall from \eqref{eqn:inner_neg_funcparam_satisfies} that $\func{+}{\al z}\func{-}{\al z} = (1 + s)\rho/s$. This allows us to establish that $x = s \func{+}{\al z}$ and $y = s \func{-}{\al z}$. Thus, \eqref{eqn:determinant_inner_neg_ratio} simplifies to
\begin{equation}%
\det(A(z) + B(z)) = (-z)^{s-1} \Bigl( s^s + z^2 ((1 + s)\rho)^s - z s^s (\func{+}{\al z}^s + \func{-}{\al z}^s) \Bigr).
\end{equation}%
Since $\det(A(z) + B(z))$ has exactly $s$ roots in the unit circle and 0 is a root of multiplicity $s - 1$ we are led to the conclusion that there exists exactly one non-zero root in the unit circle. Thus, \eqref{eqn:determinant_inner_neg} has exactly one non-zero root $\be$ in the open circle of radius $|\al|$.

\noindent (ii) The proof is identical to the proof of (i).
\end{proof}%

\begin{remark}[Notation]%
For a fixed $\al$ with $|\al| \in (0,1)$ the roots of \eqref{eqn:determinant_inner_pos} are denoted by $\be_1,\be_2,\ldots,\be_s$ with $|\be_i| < |\al|, ~ i = 1,2,\ldots,s$ and the single root of \eqref{eqn:determinant_inner_neg} is denoted by $\be_{s + 1}$ with $|\be_{s + 1}| < |\al|$. Similar indexing exists for the roots $\al$ for a fixed $\be$ with $|\be| \in (0,1)$.
\end{remark}%


\subsection{Initial solution}%
\label{subsec:initial_solution}%

Lemmas~\ref{lem:solution_inner_equations}, \ref{lem:location_number_zeros_inner_pos} and \ref{lem:location_number_zeros_inner_neg} characterize basic solutions satisfying the inner equations \eqref{eqn:equilibrium_eqs_I+}-\eqref{eqn:equilibrium_eqs_I-}. In Lemma~\ref{lem:initial_solution} we specify the form of $\qb{n}$.

\begin{lemma}[Initial solution]\label{lem:initial_solution}%
For $|\al| \in (0,1)$, let $\be_1,\be_2,\ldots,\be_s$ be the roots of \eqref{eqn:determinant_inner_pos} with $|\be_i| < |\al|, ~ i = 1,2,\ldots,s$ and $\ibpos{\al,\be_i}$ the corresponding non-zero vectors satisfying \eqref{eqn:inner_pos_eigenvector}. Symmetrically, let $\be_{s + 1}$ be the root of \eqref{eqn:determinant_inner_neg} with $|\be_{s + 1}| < |\al|$ and $\ibneg{\al,\be_{s + 1}}$ the corresponding non-zero vector satisfying \eqref{eqn:inner_neg_eigenvector}. Then there exists exactly one $\al$ with $|\al| \in (0,1)$ for which there exists a non-zero vector $\hb = (\h{0},\h{1},\ldots,\h{s - 1})^T$ and coefficients $\ch_1,\ch_2,\ldots,\ch_{s + 1}$ such that
\begin{equation}%
\pb{m,n} = \begin{cases}%
\al^m \sum_{i = 1}^s \ch_i \be_i^n \ibpos{\al,\be_i}, & m \ge 0, ~ n \ge 1, \\
\al^m \hb, & m \ge 0, ~ n = 0, \\
\ch_{s + 1} \al^m \be_{s + 1}^{-n} \ibneg{\al,\be_{s + 1}}, & m \ge 0, ~ n \le -1,
\end{cases} \label{eqn:solution_initial}%
\end{equation}%
satisfies \eqref{eqn:equilibrium_eqs_I+}-\eqref{eqn:equilibrium_eqs_H}. This unique $\al$ is equal to $\rho^{1 + s}$.

The vector $\hb$ is given by
\begin{equation}%
\hb = \al \bigl( \A{0,1} + \al \A{-1,1} \bigr)^{-1} \sum_{i = 1}^s \ch_i \ibpos{\al,\be_i} \label{eqn:initial_solution_horizontal_eigenvector}
\end{equation}%
and the coefficients $\ch_1,\ch_2,\ldots,\ch_s$ satisfy
\begin{align}%
&\sum_{i = 1}^s  \ch_i \Bigl( \be_i \bigl( \A{1,-1} + \al \A{0,-1} \bigr) + \al^2 \B{0,0} \bigl( \A{0,1} + \al \A{-1,1} \bigr)^{-1} \Bigr) \ibpos{\al,\be_i} \notag \\
&= - \ch_{s + 1} \be_{s + 1} \bigl( \B{1,1} + \al \B{0,1} \bigr) \ibneg{\al,\be_{s + 1}} \label{eqn:initial_solution_coefficients}
\end{align}%
and $\ch_{s + 1} = 1$.
\end{lemma}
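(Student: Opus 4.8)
The plan is to treat the ansatz \eqref{eqn:solution_initial} as a candidate and verify that the inner equations \eqref{eqn:equilibrium_eqs_I+}--\eqref{eqn:equilibrium_eqs_I-} are automatically satisfied by Lemma~\ref{lem:solution_inner_equations}, so that the only remaining constraints come from the five horizontal boundary equations \eqref{eqn:equilibrium_eqs_H+}--\eqref{eqn:equilibrium_eqs_H}. Since the $\be_i$ and the eigenvectors $\ibpos{\al,\be_i}$, $\ibneg{\al,\be_{s+1}}$ are functions of $\al$ alone once Lemmas~\ref{lem:location_number_zeros_inner_pos} and \ref{lem:location_number_zeros_inner_neg} are invoked, the unknowns are $\al$, the vector $\hb$, and the coefficients $\ch_1,\ldots,\ch_{s+1}$. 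I would first substitute the three pieces of \eqref{eqn:solution_initial} into \eqref{eqn:equilibrium_eqs_H+} (the $n=1$ equation), \eqref{eqn:equilibrium_eqs_H} (the $n=0$ equation), and \eqref{eqn:equilibrium_eqs_H-} (the $n=-1$ equation), and divide out the common factor $\al^{m-1}$ (or $\al^m$); this reduces the boundary conditions to a finite linear system in $\hb$ and the $\ch_i$'s, with $\al$ as a parameter.

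Next I would solve this system in stages. From the $n=1$ equation \eqref{eqn:equilibrium_eqs_H+}, after using that each $\be_i^n\ibpos{\al,\be_i}$ already satisfies the interior relation, the surviving terms involving $\pb{m,0}=\al^m\hb$ give $\bigl(\A{0,1}+\al\A{-1,1}\bigr)\hb$ equal to a combination of the $\ibpos{\al,\be_i}$, which — once one checks $\A{0,1}+\al\A{-1,1}$ is invertible — yields \eqref{eqn:initial_solution_horizontal_eigenvector}. Similarly the $n=-1$ equation \eqref{eqn:equilibrium_eqs_H-} expresses the same $\bigl(\A{0,1}+\al\A{-1,1}\bigr)\hb$ (or a closely related matrix acting on $\hb$) in terms of $\ch_{s+1}\ibneg{\al,\be_{s+1}}$; consistency of these two expressions for $\hb$, together with the remaining $n=0$ equation \eqref{eqn:equilibrium_eqs_H}, produces the linear relation \eqref{eqn:initial_solution_coefficients} linking $\ch_1,\ldots,\ch_s$ to $\ch_{s+1}$. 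Setting $\ch_{s+1}=1$ fixes the overall scale (legitimate since the equilibrium distribution is determined only up to normalization). At this point one has $s$ homogeneous-type linear equations — \eqref{eqn:initial_solution_coefficients} is $s$ scalar equations — in the $s$ unknowns $\ch_1,\ldots,\ch_s$, but with $\al$ still free; the determinant of this $s\times s$ system, call it $\Delta(\al)$, must vanish for a nontrivial solution to exist.

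The main obstacle — and the heart of the lemma — is showing that $\Delta(\al)=0$ has \emph{exactly one} root with $|\al|\in(0,1)$, and that this root is precisely $\rho^{1+s}$. For the uniqueness I would argue by a counting/degree argument on $\Delta(\al)$ analogous to the Rouché-type arguments already used in Lemmas~\ref{lem:location_number_zeros_inner_pos} and \ref{lem:location_number_zeros_inner_neg}: express $\Delta$ as a rational or polynomial function of $\al$ (the $\be_i(\al)$ enter through the determinantal equations \eqref{eqn:determinant_inner_pos} and \eqref{eqn:determinant_inner_neg}), control its zeros on $|\al|=1$, and conclude there is a single admissible root. For the identification with $\rho^{1+s}$, the clean route is to invoke Proposition~\ref{prop:initial_product_form}: that proposition already asserts the inner-plus-horizontal system has a unique product-form solution $\pb{m,n}=\rho^{(1+s)m}\qb{n}$ with $\al=\rho^{1+s}$, so the $\al$ produced by the compensation construction must coincide with it, and the explicit formulas \eqref{eqn:initial_solution_horizontal_eigenvector}--\eqref{eqn:initial_solution_coefficients} then just exhibit the decomposition of $\qb{n}$ into the basic solutions of Lemma~\ref{lem:solution_inner_equations}. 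Thus the existence/uniqueness half leans on Proposition~\ref{prop:initial_product_form}, while the constructive half is the bookkeeping of substituting \eqref{eqn:solution_initial} into the three coupled boundary equations and reading off \eqref{eqn:initial_solution_horizontal_eigenvector} and \eqref{eqn:initial_solution_coefficients}; I would also verify that the resulting $\hb$ is nonzero, which follows because the $\ibpos{\al,\be_i}$ are linearly independent (distinct $\be_i$) and not all $\ch_i$ vanish.
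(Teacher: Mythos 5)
Your overall route is the same as the paper's: the inner equations hold term by term by Lemma~\ref{lem:solution_inner_equations}; substituting \eqref{eqn:solution_initial} into \eqref{eqn:equilibrium_eqs_H+} and simplifying the summands with the inner relation yields \eqref{eqn:initial_solution_horizontal_eigenvector}; substituting into \eqref{eqn:equilibrium_eqs_H} and eliminating $\hb$ yields \eqref{eqn:initial_solution_coefficients}; and the existence and uniqueness of $\al=\rho^{1+s}$ is obtained by appealing to Proposition~\ref{prop:initial_product_form} --- exactly the ``clean route'' you name, and the one the paper actually takes. Two corrections to your bookkeeping, though. First, \eqref{eqn:initial_solution_coefficients} does not come from reconciling two expressions for $\hb$ obtained from \eqref{eqn:equilibrium_eqs_H+} and \eqref{eqn:equilibrium_eqs_H-}; it is the $n=0$ equation \eqref{eqn:equilibrium_eqs_H} after inserting \eqref{eqn:initial_solution_horizontal_eigenvector}. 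Second, and more substantively, your determinant condition is set up on the wrong system: once $\ch_{s+1}=1$ is fixed, \eqref{eqn:initial_solution_coefficients} is an \emph{inhomogeneous} $s\times s$ system in $\ch_1,\ldots,\ch_s$ that is generically uniquely solvable for \emph{every} admissible $\al$; demanding that its determinant vanish would generically destroy solvability rather than produce a nontrivial solution. The scalar constraint that actually pins down $\al$ is the negative horizontal boundary equation \eqref{eqn:equilibrium_eqs_H-}, which --- exactly as in \eqref{eqn:compensation_horizontal_pos_eta_tilde} of Lemma~\ref{lem:compensation_horizontal} --- collapses after using \eqref{eqn:solution_inner_neg_should_satisfy} to the single equation $-\ch_{s+1}\al s + \al s\,\h{0} + (1+s)\rho q\,\h{s-1} = 0$. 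The count then closes: $s$ equations from \eqref{eqn:equilibrium_eqs_H+} fix $\hb$, $s$ from \eqref{eqn:equilibrium_eqs_H} fix $\ch_1,\ldots,\ch_s$, and this one leftover equation is the condition on $\al$; that it admits exactly one root with $|\al|\in(0,1)$, namely $\rho^{1+s}$, is precisely what Proposition~\ref{prop:initial_product_form} supplies, so no separate Rouch\'e-type argument on a function $\Delta(\al)$ is needed (nor is one given in the paper). With that repair your argument coincides with the paper's proof; the remaining points you raise (invertibility of $\A{0,1}+\al\A{-1,1}$, nonvanishing of $\hb$ via linear independence of the $\ibpos{\al,\be_i}$ for distinct $\be_i$) are correct and are in fact more than the paper explicitly checks.
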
%

\begin{proof}%
Substituting \eqref{eqn:solution_initial} into \eqref{eqn:equilibrium_eqs_H+} yields
\begin{equation}%
\hb = - \frac{1}{\al} \bigl( \A{0,1} + \al \A{-1,1} \bigr)^{-1} \sum_{i = 1}^s \ch_i \bigl( \al \be_i \A{0,0} + \be_i^2 \A{1,-1} + \al \be_i^2 \A{0,-1} \bigr) \ibpos{\al,\be_i}.
\end{equation}%
Furthermore, by using \eqref{eqn:solution_inner_neg_should_satisfy} for the summands one obtains \eqref{eqn:initial_solution_horizontal_eigenvector}. Equation \eqref{eqn:initial_solution_coefficients} follows from substituting \eqref{eqn:solution_initial} into \eqref{eqn:equilibrium_eqs_H} and using \eqref{eqn:initial_solution_horizontal_eigenvector}. One can arbitrarily choose $\ch_{s + 1}$ due to the normalization that follows at the end.
\end{proof}%


\subsection{Compensation on the vertical boundary}%
\label{subsec:compensation_vertical_boundary}%

The initial solution \eqref{eqn:solution_initial} does not satisfy the positive and negative vertical boundary. We consider a term $\ch \al^m \be^n \ibpos{\al,\be}$ that satisfies the positive inner balance equations and stems from a solution that satisfies both the inner and horizontal boundary equations. For now, we refer to this term as the original term. In case of the initial solution \eqref{eqn:solution_initial}, this would be one of the $s$ product-form solutions of the positive quadrant. The idea behind the compensation approach is to add compensation terms $\sum_{i = 1}^s \cv_i \al_i^m \be^n \ibpos{\al_i,\be}$ to the original term, such that the linear combination
\begin{equation}%
\ch \al^m \be^n \ibpos{\al,\be} + \sum_{i = 1}^s \cv_i \al_i^m \be^n \ibpos{\al_i,\be}, \quad m \ge 0, ~ n \ge 1,  \label{eqn:compensation_vertical_pos_terms}
\end{equation}%
satisfies both \eqref{eqn:equilibrium_eqs_V+} and \eqref{eqn:equilibrium_eqs_I+}. Substituting \eqref{eqn:compensation_vertical_pos_terms} in \eqref{eqn:equilibrium_eqs_V+} gives
\begin{equation}%
\ch \be^{n - 1} V_+(\al,\be) \ibpos{\al,\be} + \sum_{i = 1}^s \cv_i \be^{n - 1} V_+(\al_i,\be) \ibpos{\al_i,\be} = \zerob, \quad n \ge 2, \label{eqn:equilibrium_eqs_V+_compensation_subs}
\end{equation}%
where $V_+(\al,\be) = \be(\A{0,0} + I) + \be^2 \A{0,-1} + \al \A{-1,1}$. Note that we indeed require that the original term and the compensation terms share the same $\be$ and therefore we obtain the $s$ roots $\al_1,\al_2,\ldots,\al_s$ with $|\al_i| < |\be|$ from \eqref{eqn:determinant_inner_pos}. Clearly, \eqref{eqn:compensation_vertical_pos_terms} now satisfies the positive inner equations \eqref{eqn:equilibrium_eqs_I+}. This leaves the $s$ coefficients $\cv_i$ to satisfy the $s$ equations \eqref{eqn:equilibrium_eqs_V+_compensation_subs}. However, from Remark~\ref{rem:identical_eigenvectors} we deduce that there exists a specific $i$ for which $\ibpos{\al,\be} = \ibpos{\al_i,\be}$ and thus there is only one coefficient that is required to be non-zero.

Let us now consider a term from the negative quadrant $\ch \al^m \be^{-n} \ibneg{\al,\be}$ that satisfies the negative inner balance equations and stems from the same solution that satisfies both the inner and horizontal boundary equations. Let us now refer to this term as the original term. In case of the initial solution \eqref{eqn:solution_initial}, this would be the solution on the negative quadrant. As before, the compensation approach dictates to add a compensation term $\cv_{s + 1} \al_{s + 1}^m \be^{-n} \ibneg{\al_{s + 1},\be}$ to the original term, such that the linear combination
\begin{equation}%
\ch \al^m \be^{-n} \ibneg{\al,\be} + \cv_{s + 1} \al_{s + 1}^m \be^{-n} \ibneg{\al_{s + 1},\be}, \quad m \ge 0, ~ n \le -1, \label{eqn:compensation_vertical_neg_terms}
\end{equation}%
satisfies both \eqref{eqn:equilibrium_eqs_V-} and \eqref{eqn:equilibrium_eqs_I-}. Substituting \eqref{eqn:compensation_vertical_neg_terms} in \eqref{eqn:equilibrium_eqs_V-} gives
\begin{equation}%
\ch \be^{-n + 1} V_-(\al,\be) \ibneg{\al,\be} + \cv_{s + 1} \be^{-n + 1} V_-(\al_{s + 1},\be) \ibneg{\al_{s + 1},\be} = \zerob, \quad n \le -2, \label{eqn:equilibrium_eqs_V-_compensation_subs}
\end{equation}%
where $V_-(\al,\be) = \be(\B{0,0} + s\M{0,0}) + \be^2 \B{0,1} + \al \B{-1,-1}$. Note that we indeed require that the original term and the compensation term share the same $\be$ and therefore we obtain the root $\al_{s + 1}$ with $|\al_{s + 1}| < |\be|$ from \eqref{eqn:determinant_inner_neg}. This ensures that the linear combination \eqref{eqn:compensation_vertical_neg_terms} satisfies the negative inner equations \eqref{eqn:equilibrium_eqs_I-}. Even though there is only one coefficient $\cv_{s + 1}$ for $s$ equations, it turns out that this provides enough freedom to satisfy \eqref{eqn:equilibrium_eqs_V-_compensation_subs}.

\begin{lemma}[Vertical compensation]\label{lem:compensation_vertical}\hspace*{1em}%
\begin{enumerate}[label = \textup{(\roman*)}]%
\item Consider the product form $\ch\al^m\be^n \ibpos{\al,\be}$ that satisfies the positive inner equations \eqref{eqn:equilibrium_eqs_I+} that stems from a solution that satisfies the inner and horizontal boundary equations. For this $\al$ and fixed $\be$, let $\al_1$ be the root that satisfies $\ibpos{\al,\be} = \ibpos{\al_1,\be}$ with $|\al_1| < |\be|$. Then there exists a coefficient $\cv_1$ such that
    \begin{equation}%
    \pb{m,n} = \ch \al^m \be^n \ibpos{\al,\be} + \cv_1 \al_1^m \be^n \ibpos{\al_1,\be}, \quad m \ge 0, ~ n \ge 1, \label{eqn:compensation_vertical_pos_solution}
    \end{equation}%
    satisfies \eqref{eqn:equilibrium_eqs_I+} and \eqref{eqn:equilibrium_eqs_V+}. The coefficient $\cv_1$ satisfies
    \begin{equation}%
    \cv_1 = - \ch \frac{1 - \frac{\be}{\al} (1 + s)\rho}{1 - \frac{\be}{\al_1} (1 + s)\rho}. \label{eqn:compensation_vertical_pos_coefficients}
    \end{equation}
\item Consider a product form $\ch\al^m\be^n \ibneg{\al,\be}$ that satisfies the negative inner equations \eqref{eqn:equilibrium_eqs_I-} that stems from a solution that satisfies the inner and horizontal boundary equations. For this $\be$, let $\al_{s + 1}$ be the root of \eqref{eqn:determinant_inner_neg} with $|\al_{s + 1}| < |\be|$ and let $\ibneg{\al_{s + 1},\be}$ be the corresponding non-zero vector satisfying \eqref{eqn:inner_neg_eigenvector}. Then there exists a coefficient $\cv_{s + 1}$ such that
    \begin{equation}%
    \pb{m,n} = \ch \al^m \be^{-n} \ibneg{\al,\be} + \cv_{s + 1} \al_{s + 1}^m \be^{-n} \ibneg{\al_{s + 1},\be}, \quad m \ge 0, ~ n \le -1, \label{eqn:compensation_vertical_neg_solution}
    \end{equation}%
    satisfies \eqref{eqn:equilibrium_eqs_I-} and \eqref{eqn:equilibrium_eqs_V-}. The coefficient $\cv_{s + 1}$ is given by
    \begin{equation}%
    \cv_{s + 1} = -\ch \frac{s - \frac{\be}{\al} (1 + s)\rho \ineg{\al,\be,s-1}}{s - \frac{\be}{\al_{s + 1}} (1 + s)\rho \ineg{\al_{s + 1},\be,s-1}}. \label{eqn:compensation_vertical_neg_coefficient}
    \end{equation}%
\end{enumerate}%
\end{lemma}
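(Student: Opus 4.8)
The plan is to handle both parts by the same mechanism: substitute the proposed two-term linear combination into the relevant vertical boundary equations, note that the inner equations hold automatically, and reduce the remaining matrix--vector identity to a single scalar equation for the unknown coefficient.

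For part (i), observe first that the extra term $\cv_1\al_1^m\be^n\ibpos{\al_1,\be}$ in \eqref{eqn:compensation_vertical_pos_solution} already satisfies \eqref{eqn:equilibrium_eqs_I+}: by Remark~\ref{rem:identical_eigenvectors} the $\al_1$ singled out in the statement is the second root of the quadratic \eqref{eqn:determinant_inner_pos_s-th_root} in $\al$ for the given $\be$, so $\det(\Dpos{\al_1,\be}) = 0$ and Lemma~\ref{lem:solution_inner_equations}(i) applies; the first term satisfies \eqref{eqn:equilibrium_eqs_I+} by hypothesis. It then remains to impose \eqref{eqn:equilibrium_eqs_V+}. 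Substituting \eqref{eqn:compensation_vertical_pos_solution} and cancelling the common factor $\be^{n-1}$ reduces this to $\ch\,V_+(\al,\be)\ibpos{\al,\be} + \cv_1 V_+(\al_1,\be)\ibpos{\al_1,\be} = \zerob$ with $V_+(\al,\be) = \be(\A{0,0}+\I) + \be^2\A{0,-1} + \al\A{-1,1}$. The crucial step is to insert the explicit forms $\A{0,0} = -(1+s)(\rho+1)\I + s\Lo^T$, $\A{0,-1} = s\M{s-1,0}$, $\A{-1,1} = \I$, $\A{1,-1} = (1+s)\rho\I$ and to verify the algebraic identity $V_+(\al,\be) = \tfrac{1}{\al}\Dpos{\al,\be} + \be\bigl(1 - \tfrac{\be}{\al}(1+s)\rho\bigr)\I$. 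Since $\Dpos{\al,\be}\ibpos{\al,\be} = \zerob$, this gives $V_+(\al,\be)\ibpos{\al,\be} = \be\bigl(1 - \tfrac{\be}{\al}(1+s)\rho\bigr)\ibpos{\al,\be}$, and likewise with $\al$ replaced by $\al_1$; using $\ibpos{\al_1,\be} = \ibpos{\al,\be}$ the identity collapses to the scalar equation $\ch\bigl(1 - \tfrac{\be}{\al}(1+s)\rho\bigr) + \cv_1\bigl(1 - \tfrac{\be}{\al_1}(1+s)\rho\bigr) = 0$, i.e.\ \eqref{eqn:compensation_vertical_pos_coefficients}.

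Part (ii) follows the same route. The term $\cv_{s+1}\al_{s+1}^m\be^{-n}\ibneg{\al_{s+1},\be}$ satisfies \eqref{eqn:equilibrium_eqs_I-} because $\al_{s+1}$ is, by Lemma~\ref{lem:location_number_zeros_inner_neg}(ii), the unique root of \eqref{eqn:determinant_inner_neg} inside the circle of radius $|\be|$, so Lemma~\ref{lem:solution_inner_equations}(ii) applies. Substituting \eqref{eqn:compensation_vertical_neg_solution} into \eqref{eqn:equilibrium_eqs_V-} and cancelling $\be^{-n+1}$ leaves $\ch\,V_-(\al,\be)\ibneg{\al,\be} + \cv_{s+1}V_-(\al_{s+1},\be)\ibneg{\al_{s+1},\be} = \zerob$ with $V_-(\al,\be) = \be(\B{0,0}+s\M{0,0}) + \be^2\B{0,1} + \al\B{-1,-1}$. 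Inserting the explicit negative-quadrant matrices, one verifies the analogous identity $V_-(\al,\be) = \tfrac{1}{\al}\Dneg{\al,\be} + \be s\,\M{0,0} - \tfrac{\be^2}{\al}(1+s)\rho\,\M{0,s-1}$. Because both $\M{0,0}$ and $\M{0,s-1}$ send any vector to a multiple of $\eb{0}$, and $\Dneg{\al,\be}\ibneg{\al,\be} = \zerob$, this yields $V_-(\al,\be)\ibneg{\al,\be} = \be\bigl(s - \tfrac{\be}{\al}(1+s)\rho\,\ineg{\al,\be,s-1}\bigr)\eb{0}$ (recall the normalization $\ineg{\al,\be,0} = 1$), and similarly for $\al_{s+1}$. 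This is exactly why a single coefficient $\cv_{s+1}$ can absorb the $s$ vector equations: both product-form terms are automatically proportional to $\eb{0}$. The identity therefore reduces to a scalar equation in $\cv_{s+1}$ whose solution is \eqref{eqn:compensation_vertical_neg_coefficient}.

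The bookkeeping needed to verify the two $V_\pm$ identities from the explicit transition-rate matrices is routine; the conceptual content is that these identities turn the $s$-dimensional vertical boundary equations into scalar ones. The one genuine obstacle is establishing that the coefficients are well defined, i.e.\ that the denominators $1 - \tfrac{\be}{\al_1}(1+s)\rho$ and $s - \tfrac{\be}{\al_{s+1}}(1+s)\rho\,\ineg{\al_{s+1},\be,s-1}$ do not vanish. For the positive case I would argue that a root $\tilde\al$ of \eqref{eqn:determinant_inner_pos_s-th_root} with $\tilde\al = \be(1+s)\rho$ would force the left-hand side of \eqref{eqn:determinant_inner_pos_s-th_root} to equal $1$, hence $\unit_i\be^{1/s} = 1$ and $|\be| = 1$, contradicting $|\be| \in (0,1)$; for the negative case I would rewrite the denominator via the boundary relation \eqref{eqn:inner_neg_system_of_equations_boundary} and rule out vanishing using the root-location bounds $|\al_{s+1}| < |\be| < 1$ from Lemma~\ref{lem:location_number_zeros_inner_neg}. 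I expect this non-degeneracy check, rather than the algebra, to be the main point to get right.
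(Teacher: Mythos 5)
Your proposal is correct and follows essentially the same route as the paper: both reduce the vertical boundary equations to scalar ones via the identities $V_+(\al,\be)\ibpos{\al,\be} = \be\bigl(1 - \tfrac{\be}{\al}(1+s)\rho\bigr)\ibpos{\al,\be}$ and $V_-(\al,\be)\ibneg{\al,\be} = \be\bigl(s\M{0,0} - \tfrac{\be}{\al}(1+s)\rho\M{0,s-1}\bigr)\ibneg{\al,\be}$, obtained from $\Dpos{\al,\be}\ibpos{\al,\be}=\zerob$ and $\Dneg{\al,\be}\ibneg{\al,\be}=\zerob$, then use Remark~\ref{rem:identical_eigenvectors} (resp.\ the fact that $\M{0,0}$ and $\M{0,s-1}$ map onto multiples of $\eb{0}$) to solve for the single coefficient. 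Your additional non-degeneracy check that the denominators in \eqref{eqn:compensation_vertical_pos_coefficients} and \eqref{eqn:compensation_vertical_neg_coefficient} cannot vanish is a welcome refinement that the paper's proof leaves implicit.
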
%

\begin{proof}%
(i) Use \eqref{eqn:solution_inner_pos_should_satisfy} to establish that
\begin{equation}%
V_+(\al,\be)\ibpos{\al,\be} = \bigl(\be \I - \frac{\be^2}{\al} \A{1,-1} \bigr) \ibpos{\al,\be} = \be \bigl(1 - \frac{\be}{\al} (1 + s)\rho \bigr) \ibpos{\al,\be}. \label{eqn:compensation_vertical_pos_simplification_V+}
\end{equation}%
Then using Remark~\ref{rem:identical_eigenvectors} we can establish that there exists a single $\cv_i$ that is non-zero. We label the single non-zero coefficient as $\cv_1$ and find it from \eqref{eqn:equilibrium_eqs_V+_compensation_subs} with the help of \eqref{eqn:compensation_vertical_pos_simplification_V+}.

\noindent (ii) Use \eqref{eqn:solution_inner_neg_should_satisfy} to establish that
\begin{equation}%
V_-(\al,\be)\ibneg{\al,\be} = \be \bigl(s \M{0,0} - \frac{\be}{\al} (1 + s)\rho \M{0,s - 1} \bigr) \ibneg{\al,\be}.
\end{equation}%
Thus, \eqref{eqn:equilibrium_eqs_V-_compensation_subs} reduces to a single equation and $\cv_{s + 1}$ follows directly.
\end{proof}%


\subsection{Compensation on the horizontal boundary}%
\label{subsec:compensation_horizontal_boundary}%

The solution obtained after compensation on the vertical boundary, as outlined in Lemma~\ref{lem:compensation_vertical}, does not satisfy the horizontal boundary equations. So, we need to compensate for the error on the horizontal boundary by adding new terms. The compensation procedure on the horizontal boundary has a few differences from the one described in the previous section. We outline these differences by informally treating the compensation of a product-form term of the positive quadrant. The difference in the compensation procedure is due to the fact that adding compensation terms only for the positive quadrant does not make the solution satisfy the horizontal boundary equations. Intuitively, this originates from the fact that the horizontal boundary, i.e.~$n = 0$, is connected to both the positive and negative quadrant. Thus, for the horizontal compensation step, we need to add product-form terms for the complete positive half-plane. It turns out, that these product-form terms are nearly identical to the initial solution, which indeed satisfied both the inner and horizontal boundary equations. The same procedure holds for a product-form term of the negative quadrant.

The formal compensation on the horizontal boundary is outlined in the next lemma.

\begin{lemma}[Horizontal compensation]\label{lem:compensation_horizontal}\hspace*{1em}%
\begin{enumerate}[label = \textup{(\roman*)}]%
\item Consider the product form $\cv \al^m \be^n \ibpos{\al,\be}$ that satisfies the positive inner equations \eqref{eqn:equilibrium_eqs_I+} that stems from a solution that satisfies the inner and vertical boundary equations. For this $\al$, let $\be_1,\be_2,\ldots,\be_s$ be the roots of \eqref{eqn:determinant_inner_pos} with $|\be_i| < |\al|, ~ i = 1,2,\ldots,s$ and let $\ibpos{\al,\be_i}$ be the corresponding non-zero vectors satisfying \eqref{eqn:inner_pos_eigenvector}. Symmetrically, for this $\al$, let $\be_{s + 1}$ be the root of \eqref{eqn:determinant_inner_neg} with $|\be_{s + 1}| < |\al|$ and let $\ibneg{\al,\be_{s + 1}}$ be the corresponding non-zero vector satisfying \eqref{eqn:inner_neg_eigenvector}. Then there exists a non-zero vector $\hb$ and coefficients $\ch_1,\ch_2,\ldots,\ch_{s + 1}$ such that
    \begin{equation}%
    \pb{m,n} = \begin{cases}%
    \cv \al^m \be^n \ibpos{\al,\be} + \sum_{i = 1}^s \ch_i \al^m \be_i^n \ibpos{\al,\be_i}, & m \ge 0, ~ n \ge 1, \\
    \al^m \hb, & m \ge 0, ~ n = 0, \\
    \ch_{s + 1} \al^m \be_{s + 1}^{-n} \ibneg{\al,\be_{s + 1}}, & m \ge 0, ~ n \le -1,
    \end{cases}\label{eqn:compensation_horizontal_pos_solution}%
    \end{equation}%
    satisfies \eqref{eqn:equilibrium_eqs_I+}-\eqref{eqn:equilibrium_eqs_H}.

    The vector $\hb$ and the coefficients $\ch_1,\ch_2,\ldots,\ch_{s + 1}$ satisfy
    \begin{align}%
    \bigl(\A{0,1} + \al \A{-1,1} \bigr) \hb - \al \sum_{i = 1}^s \ch_i \ibpos{\al,\be_i} &= \cv \al \ibpos{\al,\be}, \label{eqn:compensation_horizontal_pos_h}\\
    \al \B{0,0} \hb + \sum_{i = 1}^s \ch_i \be_i \bigl( \A{1,-1} + \al \A{0,-1} \bigr) \ibpos{\al,\be_i} \notag \\
    + \ch_{s + 1} \be_{s + 1} \bigl( \B{1,1} + \al \B{0,1} \bigr) \ibneg{\al,\be_{s + 1}} &= - \cv \be \bigl( \A{1,-1} + \al \A{0,-1} \bigr) \ibpos{\al,\be}, \label{eqn:compensation_horizontal_pos_eta_i}\\
    - \ch_{s + 1} \al s + \al s \h{0} + (1 + s)\rho q \h{s-1} &= 0. \label{eqn:compensation_horizontal_pos_eta_tilde}
    \end{align}%
\item Consider the product form $\cv \al^m \be^{-n} \ibneg{\al,\be}$ that satisfies the negative inner equations \eqref{eqn:equilibrium_eqs_I+} that stems from a solution that satisfies the inner and vertical boundary equations. For this $\al$, let $\be_1,\be_2,\ldots,\be_s$ be the roots of \eqref{eqn:determinant_inner_pos} with $|\be_i| < |\al|, ~ i = 1,2,\ldots,s$ and let $\ibpos{\al,\be_i}$ be the corresponding non-zero vectors satisfying \eqref{eqn:inner_pos_eigenvector}. Symmetrically, for this $\al$, let $\be_{s + 1}$ be the root of \eqref{eqn:determinant_inner_neg} with $|\be_{s + 1}| < |\al|$ and let $\ibneg{\al,\be_{s + 1}}$ be the corresponding non-zero vector satisfying \eqref{eqn:inner_neg_eigenvector}. Then there exists a non-zero vector $\hb$ and coefficients $\ch_1,\ch_2,\ldots,\ch_{s + 1}$ such that
    \begin{equation}%
    \pb{m,n} = \begin{cases}%
    \sum_{i = 1}^s \ch_i \al^m \be_i^n \ibpos{\al,\be_i}, & m \ge 0, ~ n \ge 1, \\
    \al^m \hb, & m \ge 0, ~ n = 0, \\
    \cv \al^m \be^{-n} \ibneg{\al,\be} + \ch_{s + 1} \al^m \be_{s + 1}^{-n} \ibneg{\al,\be_{s + 1}}, & m \ge 0, ~ n \le -1,
    \end{cases}%
    \end{equation}%
    satisfies \eqref{eqn:equilibrium_eqs_I+}-\eqref{eqn:equilibrium_eqs_H}.

    The vector $\hb$ and the coefficients $\ch_1,\ch_2,\ldots,\ch_{s + 1}$ satisfy
    \begin{align}%
    \bigl(\A{0,1} + \al \A{-1,1} \bigr) \hb - \al \sum_{i = 1}^s \ch_i \ibpos{\al,\be_i} &= \zerob, \label{eqn:compensation_horizontal_neg_h}\\
    \al \B{0,0} \hb + \sum_{i = 1}^s \ch_i \be_i \bigl( \A{1,-1} + \al \A{0,-1} \bigr) \ibpos{\al,\be_i} \notag \\
    + \ch_{s + 1} \be_{s + 1} \bigl( \B{1,1} + \al \B{0,1} \bigr) \ibneg{\al,\be_{s + 1}} &= - \cv \be \bigl( \B{1,1} + \al \B{0,1} \bigr) \ibneg{\al,\be}, \label{eqn:compensation_horizontal_neg_eta_i} \\
    - \ch_{s + 1} \al s + \al s \h{0} + (1 + s)\rho q \h{s-1} &= \cv \al s. \label{eqn:compensation_horizontal_neg_eta_tilde}
    \end{align}%
\end{enumerate}%
\end{lemma}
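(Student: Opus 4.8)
The plan is to adapt the proof of Lemma~\ref{lem:initial_solution}, treating the original term as an inhomogeneity; I describe part~(i), part~(ii) being symmetric. First I would check that the ansatz \eqref{eqn:compensation_horizontal_pos_solution} satisfies the inner equations \eqref{eqn:equilibrium_eqs_I+}--\eqref{eqn:equilibrium_eqs_I-} regardless of the choice of $\hb$ and $\ch_1,\ldots,\ch_{s+1}$: for $m\ge 1,~n\ge 2$ the states occurring in \eqref{eqn:equilibrium_eqs_I+} all lie in the region $n\ge 1$, where the ansatz is the sum of the original term $\cv\al^m\be^n\ibpos{\al,\be}$ (which solves \eqref{eqn:equilibrium_eqs_I+} by hypothesis) and the terms $\ch_i\al^m\be_i^n\ibpos{\al,\be_i}$, each of which solves \eqref{eqn:equilibrium_eqs_I+} by Lemma~\ref{lem:solution_inner_equations}(i) since $\be_i$ is a root of \eqref{eqn:determinant_inner_pos}; and for $m\ge 1,~n\le -2$ only the term $\ch_{s+1}\al^m\be_{s+1}^{-n}\ibneg{\al,\be_{s+1}}$ appears, which solves \eqref{eqn:equilibrium_eqs_I-} by Lemma~\ref{lem:solution_inner_equations}(ii).

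Next I would substitute \eqref{eqn:compensation_horizontal_pos_solution} into the horizontal boundary equations \eqref{eqn:equilibrium_eqs_H+}, \eqref{eqn:equilibrium_eqs_H} and \eqref{eqn:equilibrium_eqs_H-} and divide by the common power $\al^{m-1}$. In \eqref{eqn:equilibrium_eqs_H+} I would use $\A{-1,1}=\I$ together with the relations $\Dpos{\al,\be}\ibpos{\al,\be}=\zerob$ and $\Dpos{\al,\be_i}\ibpos{\al,\be_i}=\zerob$ to replace, on each eigenvector, the operator $\al\be\A{0,0}+\al\be^2\A{0,-1}+\be^2\A{1,-1}$ by $-\al^2\A{-1,1}$; after cancelling one more factor $\al$ this yields exactly \eqref{eqn:compensation_horizontal_pos_h}. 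Substituting into \eqref{eqn:equilibrium_eqs_H} gives \eqref{eqn:compensation_horizontal_pos_eta_i} directly, the original term entering through $\pb{m,1}$ and $\pb{m-1,1}$. Substituting into \eqref{eqn:equilibrium_eqs_H-}, using $\Dneg{\al,\be_{s+1}}\ibneg{\al,\be_{s+1}}=\zerob$ to replace $\al\be_{s+1}\B{0,0}+\al\be_{s+1}^2\B{0,1}+\be_{s+1}^2\B{1,1}$ by $-\al^2\B{-1,-1}$, and then invoking $\B{-1,-1}=s\M{s-1,0}$, $\B{0,-1}=(1+s)\rho q\M{s-1,s-1}$ and the normalization $\ineg{\al,\be_{s+1},0}=1$, leaves only the scalar equation in component $s-1$, which is \eqref{eqn:compensation_horizontal_pos_eta_tilde}.

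The main obstacle is then to show that \eqref{eqn:compensation_horizontal_pos_h}--\eqref{eqn:compensation_horizontal_pos_eta_tilde} admits a solution with $\hb\ne\zerob$. Since $\A{0,1}+\al\A{-1,1}=\al\I+(1+s)\rho(1-q)\M{0,s-1}$ is triangular with nonzero diagonal, \eqref{eqn:compensation_horizontal_pos_h} determines $\hb$ from $\ch_1,\ldots,\ch_s$ and the given $\cv$; eliminating $\hb$ turns \eqref{eqn:compensation_horizontal_pos_eta_i}--\eqref{eqn:compensation_horizontal_pos_eta_tilde} into a square linear system in $\ch_1,\ldots,\ch_{s+1}$ whose right-hand side is proportional to $\cv$. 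The associated homogeneous system ($\cv=0$) is precisely the one underlying the initial solution \eqref{eqn:solution_initial} of Lemma~\ref{lem:initial_solution}; because the parameter $\al$ at hand was produced in a vertical compensation step it satisfies $|\al|<\rho^{1+s}$ strictly, hence $\al\ne\rho^{1+s}$, so by Lemma~\ref{lem:initial_solution} this homogeneous system has only the trivial solution (here one also uses the linear independence of the Vandermonde-type vectors $\ibpos{\al,\be_i}$, the $\be_i$ being distinct, and the invertibility of $\B{1,1}+\al\B{0,1}$). Therefore the inhomogeneous system has a unique solution, and $\hb\ne\zerob$: if $\hb=\zerob$ then \eqref{eqn:compensation_horizontal_pos_eta_tilde} forces $\ch_{s+1}=0$, and since $\A{1,-1}+\al\A{0,-1}$ is invertible and $\be\ne\be_i$ (as $|\be_i|<|\al|<|\be|$), equations \eqref{eqn:compensation_horizontal_pos_h} and \eqref{eqn:compensation_horizontal_pos_eta_i} combine to $\sum_{i=1}^s(\be_i-\be)\ch_i\ibpos{\al,\be_i}=\zerob$, whence $\ch_i=0$ and then $\cv=0$, contradicting the nontriviality of the original term.

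Part~(ii) is handled in exactly the same way: the original negative-quadrant term $\cv\al^m\be^{-n}\ibneg{\al,\be}$ now contributes to \eqref{eqn:equilibrium_eqs_H} and \eqref{eqn:equilibrium_eqs_H-} while \eqref{eqn:equilibrium_eqs_H+} becomes homogeneous, which produces \eqref{eqn:compensation_horizontal_neg_h}--\eqref{eqn:compensation_horizontal_neg_eta_tilde}, and the same elimination and solvability argument applies.
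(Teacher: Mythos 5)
Your proof is correct and follows essentially the same route as the paper: verify the inner equations term by term via Lemma~\ref{lem:solution_inner_equations}, then substitute the ansatz into \eqref{eqn:equilibrium_eqs_H+}, \eqref{eqn:equilibrium_eqs_H} and \eqref{eqn:equilibrium_eqs_H-} and use the kernel relations $\Dpos{\al,\be}\ibpos{\al,\be}=\zerob$ and $\Dneg{\al,\be_{s+1}}\ibneg{\al,\be_{s+1}}=\zerob$ to reduce them to \eqref{eqn:compensation_horizontal_pos_h}--\eqref{eqn:compensation_horizontal_pos_eta_tilde}, with \eqref{eqn:equilibrium_eqs_H-} collapsing to the single scalar equation \eqref{eqn:compensation_horizontal_pos_eta_tilde} because $\B{-1,-1}$ and $\B{0,-1}$ are supported on row $s-1$ only. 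The paper's own proof stops there (it simply refers back to the reasoning of Lemma~\ref{lem:initial_solution}), so your additional argument that the resulting square system is uniquely solvable with $\hb\ne\zerob$ --- using that $|\al|<\rho^{1+s}$ so the homogeneous system is ruled out by the uniqueness in Lemma~\ref{lem:initial_solution}, together with the linear independence of the Vandermonde vectors $\ibpos{\al,\be_i}$ and the invertibility of $\A{1,-1}+\al\A{0,-1}$ --- is a welcome supplement that fills in what the paper leaves implicit rather than a divergence from its method.
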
%

\begin{proof}%
(i) Equations \eqref{eqn:compensation_horizontal_pos_h} and \eqref{eqn:compensation_horizontal_pos_eta_i} follow from the exact same reasoning as outlined in the proof of Lemma~\ref{lem:initial_solution}. Equation \eqref{eqn:compensation_horizontal_pos_eta_tilde} follows from substituting \eqref{eqn:compensation_horizontal_pos_solution} in the negative horizontal boundary equations \eqref{eqn:equilibrium_eqs_H-} and using \eqref{eqn:solution_inner_neg_should_satisfy}. Note that this indeed reduces to a single equation.

\noindent (ii) The proof is identical to the proof of (i).
\end{proof}%


\subsection{Constructing the equilibrium distribution}%
\label{subsec:constructing_equilibrium_distribution}%

The compensation approach ultimately leads to the following expressions for the equilibrium distribution of the SED system, where the symbol $\propto$ indicates proportionality.
\begin{enumerate}[label = \textup{(\roman*)}]%
    \item For $m \ge 0, ~ n \ge 1$,
    \begin{subequations}%
    \label{eqn:equilibrium_distribution_proportional}
    \begin{align}%
    \pb{m,n} &\propto \sum_{l = 0}^\infty \sum_{i = 1}^{(s + 1)^l} \sum_{j = 1}^s \ch_{l,d(i) + j} \al_{l,i}^m \be_{l,d(i) + j}^n \ibpos{\al_{l,i},\be_{l,d(i) + j}} \notag \\
    & \quad+ \sum_{l = 0}^\infty \sum_{i = 1}^{(s + 1)^l} \sum_{j = 1}^s \cv_{l + 1,d(i) + j} \al_{l + 1,d(i) + j}^m \be_{l,d(i) + j}^n \ibpos{\al_{l + 1,i},\be_{l,d(i) + j}}.
    \label{eqn:equilibrium_distribution_proportional_pos}
    \intertext{\item For $m \ge 0$,}
    \pb{m,0} &\propto \sum_{l = 0}^\infty \sum_{i = 1}^{(s + 1)^l} \al_{l,i}^m \hb_{l,i}. \label{eqn:equilibrium_distribution_proportional_hor}
    \intertext{\item For $m \ge 0, ~ n \le -1$,}
    \pb{m,n} &\propto \sum_{l = 0}^\infty \sum_{i = 1}^{(s + 1)^l} \ch_{l,i(s + 1)} \al_{l,i}^m \be_{l,i(s + 1)}^{-n} \ibneg{\al_{l,i},\be_{l,i(s + 1)}} \notag \\
    &\quad + \sum_{l = 0}^\infty \sum_{i = 1}^{(s + 1)^l} \cv_{l + 1,i(s + 1)} \al_{l+1,i(s + 1)}^m \be_{l,i(s + 1)}^{-n} \ibneg{\al_{l+1,i(s + 1)},\be_{l,i(s + 1)}}. \label{eqn:equilibrium_distribution_proportional_neg}
    \end{align}%
    \end{subequations}%
\end{enumerate}%
We briefly describe the indexing of the compensation terms, which grows as a tree. We indicate the level at which a parameter resides with $l$, starting at level $l = 0$. Within a level, we differentiate between parameters by using an additional index $i$. The procedure for generating terms is as follows:
\begin{enumerate}%
\item[(I)] The initial solution is determined from Lemma~\ref{lem:initial_solution}.
\item[(V)] For a vertical compensation step with fixed $\be_{l,i}$:
    \begin{enumerate}[label = (\roman*)]
    \item If the index $i$ is \textit{not} a multiple of $s + 1$, then the compensation terms are determined according to Lemma~\ref{lem:compensation_vertical}(i).
    \item Otherwise, the compensation terms are determined according to Lemma~\ref{lem:compensation_vertical}(ii).
    \end{enumerate}%
\item[(H)] For a horizontal compensation step with fixed $\al_{l,i}$:
    \begin{enumerate}[label = (\roman*)]
    \item If the index $i$ is \textit{not} a multiple of $s + 1$, then the compensation terms are determined according to Lemma~\ref{lem:compensation_horizontal}(i).
    \item Otherwise, the compensation terms are determined according to Lemma~\ref{lem:compensation_horizontal}(ii).
    \end{enumerate}%
\end{enumerate}%
Recall the definition $d(i) \defi (i - 1)(s + 1)$ and note that $d(i) + s + 1 = i(s + 1)$. Informally, the indexing of subsequent terms is visualized in Figure~\ref{fig:indexing_compensation_terms}. Note that the first term in the compensation approach is $\al_{0,1} = \rho^{1 + s}$, c.f.~Lemma~\ref{lem:initial_solution}.

\begin{figure}
\centering%
\includegraphics{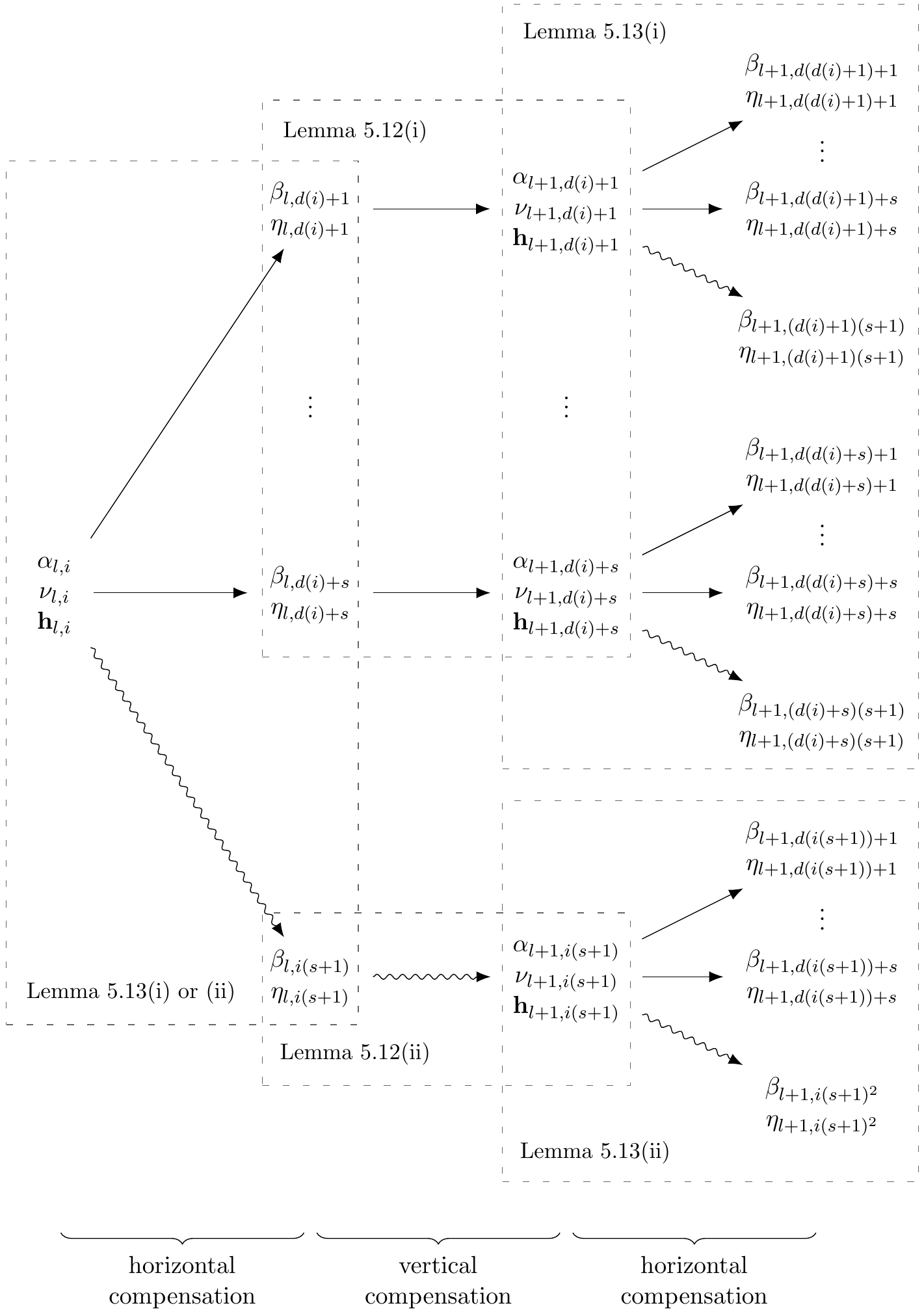}
\caption{Indexing of the compensation terms for a subtree. A dashed rectangle indicates according to which lemma the compensation terms are generated. A straight arrow indicates that the roots $\al$ or $\be$ are obtained through \eqref{eqn:determinant_inner_pos} and a snaked arrow through \eqref{eqn:determinant_inner_neg}.}%
\label{fig:indexing_compensation_terms}%
\end{figure}


\subsection{Absolute convergence}%
\label{subsec:absolute_convergence}%

In this subsection we prove that the series for the equilibrium probabilities, as formulated in \eqref{eqn:equilibrium_distribution_proportional}, are absolutely convergent. Before we are able to do that, we need some preliminary results. Specifically, we establish that the sequences $\{\al_{l,i}\}_{l \in \Nat_0, ~ i = 1,2,\ldots,(s + 1)^l}$ and $\{\be_{l,i}\}_{l \in \Nat_0, ~ i = 1,2,\ldots,(s + 1)^{l + 1}}$ decrease \textit{exponentially} fast and \textit{uniformly} in the levels of the parameter tree. Furthermore, we investigate the asymptotic behavior of (ratios of) the parameters $\al_{l,i}$, $\be_{l,i}$, the coefficients, and the elements of the (eigen)vectors.

\begin{corollary}\label{cor:exponential_convergence_alpha_beta}%
Let $\bar{\al}_l \defi \max_{i = 1,2,\ldots,(s + 1)^l} |\al_{l,i}|$ and $\bar{\be}_l \defi \max_{i = 1,2,\ldots,(s + 1)^{l + 1}} |\be_{l,i}|$. Then,
\begin{enumerate}[label = \textup{(\roman*)}]%
\item $\displaystyle \rho^{1 + s} = |\al_{0,1}| > \bar{\be}_0 > \bar{\al}_1 > \bar{\be}_1 > \bar{\al}_2 > \bar{\be}_2 > \cdots$
\item There exists $c \in (0,1)$, such that $\displaystyle 0 < \bar{\al}_l,\bar{\be}_l < c^l$.
\end{enumerate}%
\end{corollary}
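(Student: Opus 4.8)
The plan is to treat the two claims separately: (i) follows from the combinatorial structure of the parameter tree together with the root-location lemmas, while (ii) needs those lemmas upgraded to bounds that are \emph{uniform} in the free parameter.

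For (i): every parameter in the tree is produced from its parent by a single compensation step, and in each such step the new parameter is strictly smaller in modulus than the one it comes from --- a horizontal step at $\al_{l,i}$ produces $\be$'s with $|\be| < |\al_{l,i}|$ by Lemmas~\ref{lem:location_number_zeros_inner_pos} and \ref{lem:location_number_zeros_inner_neg}, and a vertical step at $\be_{l,i}$ produces an $\al$ with $|\al| < |\be_{l,i}|$ by Lemma~\ref{lem:compensation_vertical}. Since each level carries only finitely many parameters, the maxima $\bar{\al}_l$, $\bar{\be}_l$ are attained; evaluating at a maximising index and comparing with its parent gives $\bar{\be}_l < \bar{\al}_l$ (the betas of level $l$ are generated from the alphas of level $l$) and $\bar{\al}_{l+1} < \bar{\be}_l$ (the alphas of level $l+1$ are generated from the betas of level $l$). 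Together with $\bar{\al}_0 = |\al_{0,1}| = \rho^{1 + s}$ from Lemma~\ref{lem:initial_solution}, this is exactly the chain in (i).

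For (ii), monotonicity is not enough, so I would first reduce to the existence of $c_1, c_2 \in (0,1)$ such that any horizontal step at a parameter $\al$ of modulus at most $\rho^{1+s}$ produces $\be$'s with $|\be| \le c_1 |\al|$, and any vertical step at a parameter $\be$ of modulus at most $\rho^{1+s}$ produces $\al$'s with $|\al| \le c_2 |\be|$. Since all tree parameters have modulus at most $\bar{\al}_0 = \rho^{1+s} < 1$ by (i), an induction over levels then yields $\bar{\al}_l \le (c_1 c_2)^l \rho^{1+s}$ and $\bar{\be}_l \le c_1 (c_1 c_2)^l \rho^{1+s}$, and since $\rho^{1+s} < 1$ and $c_1 \rho^{1+s} < 1$ both are below $(c_1 c_2)^l$, so $c := c_1 c_2$ works. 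To obtain $c_1$ and $c_2$ I would sharpen the proofs of Lemmas~\ref{lem:location_number_zeros_inner_pos} and \ref{lem:location_number_zeros_inner_neg}: there the relevant root is located by (matrix) Rouch\'e --- Theorem~\ref{thm:deSmit} in the negative-quadrant case --- applied on the circle $|z| = 1$, with $z$ the ratio $\be/\al$ or $\al/\be$. On that circle the dominating inequalities (``$|f(z)|^s = s^s > |\al| s^s$'' and ``$|b_{i,i}(z)| > \sum_j |a_{i,j}(z)|$'') hold with a strictly positive slack that is uniform over $0 < |\al| \le \rho^{1+s}$ (resp.\ over $\be$), precisely because $\rho^{1+s} < 1$. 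By continuity in the radius these inequalities persist on a slightly smaller circle $|z| = r_0$, $r_0 < 1$, uniformly in the free parameter, provided $r_0$ still exceeds the fixed ($\rho,s$-dependent) limiting root $\Lrootpos_{\scriptscriptstyle -}$ of $f(z)^s$ (and its negative-quadrant analogue), which is $< 1$ for $0 < \rho < 1$, so that the root count is preserved. Hence the generated root obeys $|z| < r_0$, i.e.\ $|\be| < r_0 |\al|$ (resp.\ $|\al| < r_0 |\be|$), and one takes $c_1$ and $c_2$ to be the largest such $r_0$ across the four kinds of compensation step. Alternatively, $c_1, c_2$ can be read off from continuity of the roots: on the compact annulus $\varepsilon \le |\al| \le \rho^{1+s}$ the strict bound $|\be| < |\al|$ of Lemma~\ref{lem:location_number_zeros_inner_pos} gives a maximal ratio $< 1$, while as $|\al| \to 0$ the roots $\be/\al$ converge to $\Lrootpos_{\scriptscriptstyle -} < 1$, so the supremum of $|\be|/|\al|$ over the whole punctured disk stays below $1$.

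The main obstacle is precisely this uniformity in (ii): one must verify that the dominating inequalities of the earlier proofs degrade continuously as the contour shrinks and keep a slack bounded away from $0$ as the free parameter ranges over the full punctured disk of radius $\rho^{1+s}$ and over all four compensation-step types, and that the limiting roots stay bounded away from the unit circle. Once this uniform statement is in hand, the reduction above and the induction over tree levels are routine.
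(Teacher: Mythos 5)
Your proposal is correct and follows the same skeleton as the paper's proof: part (i) is read off from the strict root-location inequalities of Lemmas~\ref{lem:location_number_zeros_inner_pos} and \ref{lem:location_number_zeros_inner_neg} applied parent-to-child in the tree, and part (ii) is reduced to uniform contraction constants $\Lboundal,\Lboundbe\in(0,1)$ with $c=\Lboundal\Lboundbe$. For obtaining these constants the paper uses exactly your ``alternative'': it defines $t(\al)=\max_i|\be_i/\al|$ on the closed disc $|\al|\le\rho^{1+s}$, notes $t(\al)<1$ pointwise, and takes the maximum over this compact set (the behaviour at $\al=0$ being controlled by the limiting ratios $\Lal,\Lalt<1$ of Lemma~\ref{lem:convergence_alpha_beta}, which is the point you rightly flag). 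Your primary route --- re-running the (matrix) Rouch\'e argument on a shrunk contour $|z|=r_0<1$, using that the dominating inequalities hold on $|z|=1$ with slack bounded below by $1-\rho^{1+s}$ uniformly in the free parameter, and that the dominant factor's roots $\Lal$ (of multiplicity $s$) resp.\ $z=0$ stay inside $|z|<r_0$ --- is a more constructive variant the paper does not use, but it is sound and would in principle yield an explicit $r_0$; the compactness argument is shorter but non-quantitative. Either way the induction over levels you describe closes the proof, so there is no gap.
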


\begin{proof}%
(i) Follows immediately from Lemmas~\ref{lem:location_number_zeros_inner_pos} and \ref{lem:location_number_zeros_inner_neg}.

\noindent (ii) For a fixed $\al$, let $\be_1,\be_2,\ldots,\be_s$ be the roots of \eqref{eqn:determinant_inner_pos} with $|\be_i| < |\al|, ~ i = 1,2,\ldots,s$ and let $\be_{s + 1}$ be the root of \eqref{eqn:determinant_inner_neg} with $|\be_{s + 1}| < |\al|$. We define
\begin{equation}%
t(\al) \defi \max_{i = 1,2,\ldots,s + 1} |\be_i/\al|
\end{equation}%
and let $\mathcal{C} = \{ \al \in \Complex \mid |\al| \le \al_{0,1} = \rho^{1 + s} \}$. Using Lemmas~\ref{lem:location_number_zeros_inner_pos} and \ref{lem:location_number_zeros_inner_neg} we have $t(\al) < 1$ and in particular, since $\rho \in (0,1)$, we have that $\mathcal{C}$ is a closed proper subset of the unit disc and thus $\Lboundal \defi \max_{\al \in \mathcal{C}} t(\al) < 1$. One can reason along the same lines and obtain a bound $\Lboundbe$ for a fixed $\be$. The exponential decrease in terms of the level $l$ follows from the fact that each $\be$ that is generated from an $\al$ satisfies $|\be| < |\al| \Lboundal$ and each $\al$ that is generated from a $\be$ satisfies $|\al| < |\be|\Lboundbe$. Thus, we define $c \defi \Lboundal \Lboundbe$.
\end{proof}%

In Corollary~\ref{cor:exponential_convergence_alpha_beta} we have established that both sequences $\{\al_{l,i}\}_{l \in \Nat_0, ~ i = 1,2,\ldots,(s + 1)^l}$ and $\{\be_{l,i}\}_{l \in \Nat_0, ~ i = 1,2,\ldots,(s + 1)^{l + 1}}$ tend to zero as $l \to \infty$. Thus, letting $\al \downarrow 0$ or $\be \downarrow 0$ is equivalent to letting $l \to \infty$. In what follows, whenever the specific dependence on the level $l$ is not needed, we opt to use the simpler notation of Sections~\ref{subsec:preliminary_results}-\ref{subsec:compensation_horizontal_boundary}. The next lemma presents the limiting behavior of the compensation parameters and their associated eigenvectors.

\begin{lemma}\label{lem:convergence_alpha_beta}\hspace*{1em}%
\begin{enumerate}[label = \textup{(\roman*)}]%
\item For a fixed $\al$, let $\be_1,\be_2,\ldots,\be_s$ be the roots of \eqref{eqn:determinant_inner_pos} with $|\be_i| < |\al|, ~ i = 1,2,\ldots,s$ and let $\be_{s + 1}$ be the root of \eqref{eqn:determinant_inner_neg} with $|\be_{s + 1}| < |\al|$. Then, as $\al \downarrow 0$,
    \begin{enumerate}[label = \textup{(\alph*)}]%
    \item The ratio $\be_i/\al \to \Lal, ~ i = 1,2,\ldots,s$ with $\Lal < 1$, where $\Lal$ is the root of
    \begin{equation}%
    \Lrootpos^2(1 + s)\rho - \Lrootpos(1 + s)(\rho + 1) + 1 = 0. \label{eqn:limiting_ratio_be_i/al_satisfies}
    \end{equation}%
    \item The eigenvector $\ibpos{\al,\be_i} \to \eb{0}, ~ i = 1,2,\ldots,s$.
    \item The ratio $\be_{s + 1}/\al \to \Lalt$ with $\Lalt < 1$, where $\Lalt$ is the root of
    \begin{equation}%
    \Lrootneg^2((1 + s)\rho)^s - \Lrootneg s^s (\func{+}{0}^s + \func{-}{0}^s) + s^s = 0, \label{eqn:limiting_ratio_bet/al_satisfies}
    \end{equation}%
    where $\func{\pm}{0}$ is defined in \eqref{eqn:func_roots_eigenvectors}.
    \item The elements of the eigenvector $\ineg{\al,\be_{s + 1},r} \to \func{+}{0}^r, ~ r = 0,1,\ldots,s-1$.
    \end{enumerate}%
\item For a fixed $\be$, let $\al_1,\al_2,\ldots,\al_s$ be the roots of \eqref{eqn:determinant_inner_pos} with $|\al_i| < |\be|, ~ i = 1,2,\ldots,s$ and let $\al_{s + 1}$ be the root of \eqref{eqn:determinant_inner_neg} with $|\al_{s + 1}| < |\be|$. Then, as $\be \downarrow 0$,
    \begin{enumerate}[label = \textup{(\alph*)}]%
    \item The ratio $\al_i/\be \to 1/\Lbe, ~ i = 1,2\ldots,s$ with $\Lbe > 1$, where $\Lbe$ is the root of \eqref{eqn:limiting_ratio_be_i/al_satisfies}.
    \item The eigenvector $\ibpos{\al_i,\be} \to \eb{0}, ~ i = 1,2\ldots,s$.
    \item The ratio $\al_{s + 1}/\be \to 1/\Lbet$ with $\Lbet > 1$, where $\Lbet$ is the root of \eqref{eqn:limiting_ratio_bet/al_satisfies}.
    \item The elements of the eigenvector $\ineg{\al_{s + 1},\be,r} \to \func{-}{0}^r, ~ r = 0,1,\ldots,s - 1$.
    \end{enumerate}%
\end{enumerate}%
\end{lemma}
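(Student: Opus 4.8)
The plan is to pass to the limit in the \emph{scalar} equations characterizing the roots $\be_i$ (resp.\ $\al_i$) --- the rewritten forms obtained in the proofs of Lemmas~\ref{lem:location_number_zeros_inner_pos} and \ref{lem:location_number_zeros_inner_neg} --- and then substitute the limiting ratios into the explicit eigenvector expressions \eqref{eqn:inner_pos_eigenvector} and \eqref{eqn:inner_neg_eigenvector}. Throughout I would argue along an arbitrary sequence $\al\downarrow0$. Since the relevant $\al$ satisfy $|\al|\le\rho^{1+s}$, Corollary~\ref{cor:exponential_convergence_alpha_beta} (more precisely the bound $\Lboundal$ from its proof) keeps every quotient $\be_i/\al$ in the \emph{fixed} compact disc $\{\,|w|\le\Lboundal\,\}$, which lies strictly inside the unit disc. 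Hence these quotients have limit points, all of modulus below $1$, and the whole argument reduces to showing that in each case there is a \emph{unique} admissible limit point; this upgrades subsequential convergence to convergence and forces all $s$ positive-quadrant ratios to collapse to one and the same limit. Part~(ii) is the mirror image of part~(i): a limiting quadratic has a root of modulus below $1$ exactly when its reciprocal equation has one of modulus above $1$, so I would carry out part~(i) in detail and read off part~(ii) with the obvious substitutions.

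For (i)(a) I would start from $f(z)^s=\al\,s^s z^{s+1}$ with $z=\be_i/\al$ and $f(z)=(1+s)(\rho+1)z-(1+s)\rho z^2-1$, as in the proof of Lemma~\ref{lem:location_number_zeros_inner_pos}. Since $z$ stays bounded and $\al\to0$, the right-hand side tends to $0$, hence $f(z)\to0$; the two roots of $f$ are $\Lrootpos_{\scriptscriptstyle\pm}$ of \eqref{eqn:limiting_roots_pos}, and only $\Lrootpos_{\scriptscriptstyle-}$ has modulus below $1$, so $\be_i/\al\to\Lrootpos_{\scriptscriptstyle-}=\Lal$, which indeed solves \eqref{eqn:limiting_ratio_be_i/al_satisfies}. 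For (i)(b) I substitute $\be/\al\to\Lal$ and $\al/\be\to1/\Lal$ into the base $\frac{1}{s}\left((1+s)(\rho+1)-\frac{\be}{\al}(1+s)\rho-\frac{\al}{\be}\right)$ of the power in \eqref{eqn:inner_pos_eigenvector}; dividing \eqref{eqn:limiting_ratio_be_i/al_satisfies} by $\Lal$ shows that this base tends to $0$, so with the normalization $\ipos{\al,\be,0}=1$ one obtains $\ibpos{\al,\be_i}\to\eb{0}$.

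For (i)(c) I would divide \eqref{eqn:determinant_inner_neg} by $\al^2$ and set $w=\be_{s+1}/\al$; since $\be_{s+1}\to0$ we have $\func{\pm}{\be_{s+1}}\to\func{\pm}{0}$, so any limit point of $w$ solves \eqref{eqn:limiting_ratio_bet/al_satisfies}. The crucial simplification is that Vieta's formulas for the left-hand side of \eqref{eqn:inner_neg_funcparam_satisfies} at $\be=0$, namely $s\funcparam^2-(1+s)(\rho+1)\funcparam+(1+s)\rho$, give $\func{+}{0}\func{-}{0}=(1+s)\rho/s$, whence $((1+s)\rho)^s=s^s\func{+}{0}^s\func{-}{0}^s$, which makes \eqref{eqn:limiting_ratio_bet/al_satisfies} factor as $\bigl(\Lrootneg\func{+}{0}^s-1\bigr)\bigl(\Lrootneg\func{-}{0}^s-1\bigr)=0$. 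Evaluating that polynomial at $\funcparam=1$ yields $-1<0$, hence $\func{-}{0}<1<\func{+}{0}$, so the unique root of modulus below $1$ is $\Lalt=\func{+}{0}^{-s}$. For (i)(d) I plug this selection into the identity $\Func{\al,\be,\funcparam}=(1+s)\rho\,\funcparam^{-1}\bigl(\frac{\be}{\al}\funcparam^s-1\bigr)$ from \eqref{eqn:Func_roots_eigenvectors}: along the sequence $\Func{\al,\be_{s+1},\func{+}{\be_{s+1}}}\to0$ while $\Func{\al,\be_{s+1},\func{-}{\be_{s+1}}}$ tends to a nonzero constant, so in \eqref{eqn:inner_neg_eigenvector} the $\func{-}{\be}^{\,r}$-term drops out of numerator and denominator alike and the ratio tends to $\func{+}{0}^{\,r}$. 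Part~(ii) then follows verbatim with the roles of $\func{+}{0}$ and $\func{-}{0}$ swapped: $\al_i/\be\to1/\Lbe$ with $\Lbe=\Lrootpos_{\scriptscriptstyle+}$, $\al_{s+1}/\be\to\func{-}{0}^s=1/\Lbet$, the base in \eqref{eqn:inner_pos_eigenvector} again vanishes, and in \eqref{eqn:inner_neg_eigenvector} it is the factor $\Func{\al_{s+1},\be,\func{-}{\be}}$ that vanishes in the limit, leaving $\ineg{\al_{s+1},\be,r}\to\func{-}{0}^{\,r}$.

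The main obstacle is the root selection in parts~(c) and (d): one has to spot the factorization of \eqref{eqn:limiting_ratio_bet/al_satisfies} via $((1+s)\rho)^s=s^s\func{+}{0}^s\func{-}{0}^s$ and then decide which factor carries the admissible (modulus $<1$) ratio by testing the sign of $s\funcparam^2-(1+s)(\rho+1)\funcparam+(1+s)\rho$ at $\funcparam=1$; after that, (d) is merely the observation that this same selection makes one of $\Func{\cdot\,,\,\func{+}{0}}$, $\Func{\cdot\,,\,\func{-}{0}}$ vanish, so \eqref{eqn:inner_neg_eigenvector} degenerates cleanly. Everything else --- the boundedness of the quotients that produces limit points, the identification of the roots of $f$, and the collapse of the $s$ positive-quadrant eigenvectors to $\eb{0}$ --- is a routine continuity argument.
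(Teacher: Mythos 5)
Your proposal is correct and follows essentially the same route as the paper: pass to the limit in the rewritten determinant equations \eqref{eqn:determinant_inner_pos_rewritten} and \eqref{eqn:determinant_inner_neg} and then substitute the limiting ratios into the explicit eigenvector formulas \eqref{eqn:inner_pos_eigenvector} and \eqref{eqn:inner_neg_eigenvector}. The only organizational difference is in (i)(c)--(d): you identify $\Lalt=\func{+}{0}^{-s}$ by explicitly factoring \eqref{eqn:limiting_ratio_bet/al_satisfies} as $\bigl(\Lrootneg\func{+}{0}^s-1\bigr)\bigl(\Lrootneg\func{-}{0}^s-1\bigr)=0$ via $\func{+}{0}\func{-}{0}=(1+s)\rho/s$, whereas the paper first rules out $\Func{\cdot,\cdot,\func{-}{0}}=0$ using $\Lalt<1$ and then verifies $\Lalt=1/\func{+}{0}^s$ by substitution --- an equivalent computation arranged slightly differently, with your version making the root selection a bit more transparent.
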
%

\begin{proof}%
(i)(a) Set $\Lrootpos = \be_i/\al$ and let $\al \downarrow 0$ in \eqref{eqn:determinant_inner_pos_rewritten} to establish \eqref{eqn:limiting_ratio_be_i/al_satisfies}. The roots of \eqref{eqn:limiting_ratio_be_i/al_satisfies} are defined in \eqref{eqn:limiting_roots_pos} and satisfy $0 < \Lal < 1$ and $\Lbe > 1$.

\noindent(i)(b) We have that $\ipos{\al,\be,0} = 1$ for all $\al$ and for $\al \downarrow 0$ the other elements in \eqref{eqn:inner_pos_eigenvector} go to $((1+s)(\rho + 1) - \Lal(1 + s)\rho - 1/\Lal)/s$, which is equal to zero by (i)(a).

\noindent(i)(c) Set $\Lrootneg = \be_{s + 1}/\al$, divide \eqref{eqn:determinant_inner_neg} by $\al^2$ and let $\al \downarrow 0$ to establish \eqref{eqn:limiting_ratio_bet/al_satisfies}. We note that $\func{\pm}{\cdot}$ is a continuous function, so that indeed for $\al \downarrow 0$, $\func{\pm}{\al \Lrootneg} \to \func{\pm}{0}$. Furthermore, \eqref{eqn:limiting_ratio_bet/al_satisfies} is a quadratic equation in $\Lrootneg$ with roots $|\Lalt| < 1$ and $|\Lbet| > 1$, where the inequalities follow from Lemma~\ref{lem:location_number_zeros_inner_neg}, and also satisfy $\Lalt,\Lbet \in \Real_+$.

\noindent(i)(d) Again, set $\Lrootneg = \be_{s + 1}/\al$. Recall the definition of the eigenvector in \eqref{eqn:inner_neg_eigenvector}. We aim at showing that for $\al \downarrow 0$, $\Func{\al,\be_{s + 1},\func{-}{\al \Lrootneg}} \to 0$ and $\Func{\al,\be_{s + 1},\func{+}{\al \Lrootneg}}$ tends to some non-zero constant. As $\al \downarrow 0$, we have that
\begin{equation}%
\Func{\al,\be_{s + 1},\funcparam} \to (1 + s)\rho \funcparam^{-1} \bigl( \Lalt \funcparam^s - 1 \bigr). \label{eqn:limiting_Func}
\end{equation}%
Note that $\func{-}{0} \in (0,1)$ and $\func{+}{0} > 1$. Since $\Lalt < 1$, the limiting value in \eqref{eqn:limiting_Func} can only equal zero in the case that $\Lalt = 1/\func{+}{0}^s$. Thus, we have established that $\Func{\al,\be_{s + 1},\func{-}{\al \Lrootneg}}$ tends to a non-zero constant as $\al \downarrow 0$. We next verify that $\Lalt = 1/\func{+}{0}^s$ is a solution to \eqref{eqn:limiting_ratio_bet/al_satisfies}, which proves that $\Func{\al,\be_{s + 1},\func{+}{\al \Lrootneg}} \to 0$ as $\al \downarrow 0$. Substituting $\Lrootneg = 1/\func{+}{0}^s$ in the left-hand side of \eqref{eqn:limiting_ratio_bet/al_satisfies} yields
\begin{align}%
&\frac{1}{\func{+}{0}^{2s}} ((1 + s)\rho)^s - \frac{1}{\func{+}{0}^s} s^s ( \func{+}{0}^s + \func{-}{0}^s ) + s^s, \notag \\
&= \frac{1}{\func{+}{0}^{2s}} \Bigl( ((1 + s)\rho)^s - s^s \func{+}{0}^s ( \func{+}{0}^s + \func{-}{0}^s ) + s^s \func{+}{0}^{2s} \Bigr), \notag \\
&= \frac{1}{\func{+}{0}^{2s}} \Bigl( ((1 + s)\rho)^s - s^s ( \func{+}{0} \func{-}{0} )^s \Bigr) = 0,
\end{align}%
where we used $\func{+}{0}\func{-}{0} = (1 + s)\rho/s$, as found in \eqref{eqn:inner_neg_funcparam_satisfies}.

\noindent(ii) The proof is identical to the proof of (i).
\end{proof}%

Finally, we describe the limiting behavior of the coefficients. In the following lemma we introduce the variable $\Lcoeffpos$ associated with a product-form solution that satisfies the positive inner equations, and the variable $\Lcoeffneg$ associated with a product-form solution that satisfies the negative inner equations.

\begin{lemma}\label{lem:convergence_coefficients}\hspace*{1em}%
\begin{enumerate}[label = \textup{(\roman*)}]%
\item Consider the setting of \textup{Lemma~\ref{lem:compensation_vertical}(i)}. Then, as $\be \downarrow 0$,
    \begin{equation}%
    \frac{\cv_1}{\ch} \to - \frac{1 - \Lal (1 + s)\rho}{1 - \Lbe (1 + s)\rho} \ifed \Lcv.
    \end{equation}%
\item Consider the setting of \textup{Lemma~\ref{lem:compensation_vertical}(ii)}. Then, as $\be \downarrow 0$,
    \begin{equation}%
    \frac{\cv_{s + 1}}{\ch} \to - \frac{s - \Lalt (1 + s) \rho \func{+}{0}^{s - 1}}{s - \Lbet(1 + s)\rho \func{-}{0}^{s - 1}} \ifed \Lcvt.
    \end{equation}%
\item Consider the setting of \textup{Lemma~\ref{lem:compensation_horizontal}(i)}. Then, as $\al \downarrow 0$,
    \begin{enumerate}[label = \textup{(\alph*)}]%
    \item %
        \begin{equation}%
        \frac{\hb}{\cv} \to \zerob.
        \end{equation}%
    \item %
        \begin{equation}%
        \frac{\ch_{s + 1}}{\cv} \to \frac{\Lal - \Lbe}{\Lal s \frac{1-q}{q} + \Lalt \func{+}{0}^{s - 1}} \ifed \Lchtpos.
        \end{equation}%
    \item %
        \begin{equation}%
        \lim_{\al \downarrow 0} \Bigl| \frac{\ch_i}{\cv} \Bigr| \le \max_{1 \le j,r+1 \le s} |a_j(r)| \ifed \Lchpos,
        \end{equation}%
        where $\bld{a}_j = (a_j(0),a_j(1),\ldots,a_j(s - 1))^T$ is the solution to the following linear system of equations for a specific $j$, with $\bld{b}_j$ a column vector of size $s$ with unknowns,
        \begin{align}%
        \Lal W \bld{a}_j + L \bld{b}_j &= - \Lbe \bld{w}_j - \Lchtpos \Lalt \func{+}{0}^{s - 1}\eb{0}, \\
        - W \bld{a}_j + (1 + s)\rho(1 - q) \M{0,s - 1} \bld{b}_j  &= \bld{w}_j
        \end{align}%
        with
        \begin{align}%
        W &\defi \begin{pmatrix}%
        1 & 1 & \cdots & 1 \\
        \Lal^{1/s} \unit_1 & \Lal^{1/s} \unit_2 & \cdots & \Lal^{1/s} \unit_s \\
        \vdots & \vdots & & \vdots \\
        \Lal^{(s - 1)/s} \unit_1^{s - 1} & \Lal^{(s - 1)/s} \unit_2^{s - 1} & \cdots & \Lal^{(s - 1)/s} \unit_s^{s - 1}
        \end{pmatrix}, \label{eqn:convergence_horizontal_definition_W} \\
        \bld{w}_j &\defi
        \begin{pmatrix}%
        1 & \Lbe^{1/s} \unit_j & \cdots & \Lbe^{(s - 1)/s} \unit_j^{s - 1}
        \end{pmatrix}^T. \label{eqn:convergence_horizontal_definition_w_j}%
        \end{align}%
    \end{enumerate}%
    \item Consider the setting of \textup{Lemma~\ref{lem:compensation_horizontal}(ii)}. Then, as $\al \downarrow 0$,
\begin{enumerate}[label = \textup{(\alph*)}]%
    \item %
        \begin{equation}%
        \frac{\hb}{\cv} \to \zerob.
        \end{equation}%
    \item %
        \begin{equation}%
        \frac{\ch_{s + 1}}{\cv} \to - \frac{\Lal s \frac{1 - q}{q} + \Lbet \func{-}{0}^{s - 1}}{\Lal s \frac{1 - q}{q} + \Lalt \func{+}{0}^{s - 1}} \ifed \Lchtneg.
        \end{equation}%
    \item %
        \begin{equation}%
        \lim_{\al \downarrow 0} \Bigl| \frac{\ch_i}{\cv} \Bigr| \le \max_{0 \le r \le s - 1} |c(r)| \ifed \Lchneg,
        \end{equation}%
        where $\bld{c} = (c(0),c(1),\ldots,c(s - 1))^T$ is the solution to, with $\bld{d}$ a column vector of size $s$ with unknowns,
        \begin{align}%
        \Lal W \bld{c} + \Lo \bld{d} &= -\bigl( \Lbet \func{-}{0}^{s - 1} + \Lchtneg \Lalt \func{+}{0}^{s - 1} \bigr) \eb{0}, \\
        - W \bld{c}+ (1 + s) \rho (1 - q) \M{0,s - 1} \bld{d} &= \zerob.
        \end{align}%
    \end{enumerate}%
\end{enumerate}%
\end{lemma}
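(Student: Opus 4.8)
The plan is to feed the closed-form coefficient identities of Lemmas~\ref{lem:compensation_vertical} and \ref{lem:compensation_horizontal} into the limit relations of Lemma~\ref{lem:convergence_alpha_beta}, with a careful rescaling in the horizontal case so that the limiting linear system does not degenerate. Parts (i) and (ii) are pure substitution. In the setting of Lemma~\ref{lem:compensation_vertical}(i) the incoming $\be$ was generated from $\al$ through \eqref{eqn:determinant_inner_pos} with $|\be|<|\al|$, so $\be/\al\to\Lal$ as $\al\downarrow0$ by Lemma~\ref{lem:convergence_alpha_beta}(i)(a), while $\al_1$ comes from $\be$ through \eqref{eqn:determinant_inner_pos} with $|\al_1|<|\be|$, so $\be/\al_1\to\Lbe$ by Lemma~\ref{lem:convergence_alpha_beta}(ii)(a); inserting these into \eqref{eqn:compensation_vertical_pos_coefficients} gives $\cv_1/\ch\to\Lcv$. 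For (ii) the incoming $\be$ is of ``negative type'', generated through \eqref{eqn:determinant_inner_neg}, so $\be/\al\to\Lalt$ and $\be/\al_{s+1}\to\Lbet$, and in addition $\ineg{\al,\be,s-1}\to\func{+}{0}^{s-1}$ and $\ineg{\al_{s+1},\be,s-1}\to\func{-}{0}^{s-1}$ by Lemma~\ref{lem:convergence_alpha_beta}(i)(d) and (ii)(d); then \eqref{eqn:compensation_vertical_neg_coefficient} yields $\cv_{s+1}/\ch\to\Lcvt$.

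For (iii) and (iv) I would start from the systems \eqref{eqn:compensation_horizontal_pos_h}--\eqref{eqn:compensation_horizontal_pos_eta_tilde}, respectively \eqref{eqn:compensation_horizontal_neg_h}--\eqref{eqn:compensation_horizontal_neg_eta_tilde}, divided throughout by $\cv$. The difficulty is that by Lemma~\ref{lem:convergence_alpha_beta}(i)(b) all the eigenvectors $\ibpos{\al,\be_i}$, $i=1,\dots,s$, collapse to $\eb{0}$, so the naive limit is singular. I would desingularize using the exact shape $\ipos{\al,\be_i,r}=(\unit_i\be_i^{1/s})^r$ together with $\be_i/\al=\Lal+\BigO(\al^{1/s})$: after dividing the $r$-th scalar equation of \eqref{eqn:compensation_horizontal_pos_h} by $\al^{r/s}$, the matrix with columns $\ibpos{\al,\be_i}$ converges to the Vandermonde matrix $W$ of \eqref{eqn:convergence_horizontal_definition_W}, which is invertible because its nodes $\Lal^{1/s}\unit_i$ are distinct, and the incoming eigenvector $\ibpos{\al,\be}$ (carrying the root-of-unity index $j$ of $\al$, shared with $\be$ by Remark~\ref{rem:identical_eigenvectors}, with $\be/\al\to\Lbe$) rescales to $\bld{w}_j$ of \eqref{eqn:convergence_horizontal_definition_w_j}. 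Since the contribution of $\hb$ to \eqref{eqn:compensation_horizontal_pos_h} then turns out to be of lower order — specifically $\h{r}/\cv=\BigO(\al^{(r+1)/s})$ for $r\ge1$ — one has to rescale \eqref{eqn:compensation_horizontal_pos_eta_i} at the finer scale $\al^{1+r/s}$: there $\be_i(\A{1,-1}+\al\A{0,-1})\ibpos{\al,\be_i}$ again contributes $W$, the sub-diagonal $(1+s)\rho\Lo$ inside $\B{0,0}$ couples $(\bld{b}_j)_r\defi\lim\h{r}/(\cv\al^{(r+1)/s})$ in via the shift matrix $\Lo$, and $\ch_{s+1}\be_{s+1}(\B{1,1}+\al\B{0,1})\ibneg{\al,\be_{s+1}}$ — using $\be_{s+1}/\al\to\Lalt$ and $\ineg{\al,\be_{s+1},r}\to\func{+}{0}^r$ — contributes the term $\Lchtpos\Lalt\func{+}{0}^{s-1}\eb{0}$ on the right-hand side.

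With both rescalings in place the three claims of (iii) should fall out in order. From \eqref{eqn:compensation_horizontal_pos_eta_tilde} one gets $\h{s-1}=\tfrac{\al s}{(1+s)\rho q}(\ch_{s+1}-\h{0})$, hence $\h{s-1}/\cv=\BigO(\al)$ once $\ch_{s+1}/\cv$ and $\h{0}/\cv$ are bounded; feeding this back, the rows $r=1,\dots,s-1$ of \eqref{eqn:compensation_horizontal_pos_h} force $\h{r}/\cv\to0$, and then the row $r=1$ of \eqref{eqn:compensation_horizontal_pos_eta_i} forces $\h{0}/\cv\to0$, which is (a). The row $r=0$ of \eqref{eqn:compensation_horizontal_pos_h} (after eliminating $\h{s-1}$ through \eqref{eqn:compensation_horizontal_pos_eta_tilde}) and the row $r=0$ of the rescaled limit of \eqref{eqn:compensation_horizontal_pos_eta_i} give two scalar relations linking $\lim\ch_{s+1}/\cv$ to $\sum_i\lim\ch_i/\cv$; eliminating the sum yields $\ch_{s+1}/\cv\to\Lchtpos$, which is (b). Finally, the rows $r\ge1$ of the rescaled limits of \eqref{eqn:compensation_horizontal_pos_h} and \eqref{eqn:compensation_horizontal_pos_eta_i}, together with the two $r=0$ relations, are precisely the linear system displayed in (iii)(c) for $(\bld{a}_j,\bld{b}_j)$ with $\bld{a}_j$ the vector of limits of $\ch_i/\cv$; invertibility of $W$ makes it uniquely solvable, so $\ch_i/\cv$ converges to a component of $\bld{a}_j$ and $|\lim\ch_i/\cv|\le\max_{j,r}|a_j(r)|=\Lchpos$, which is (c). Part (iv) should be identical after replacing the incoming positive-type $\be$ by a negative-type one, so that $\be/\al\to\Lbet$ and $\ibneg{\al,\be}\to(\func{-}{0}^r)_r$ take over the role of $\bld{w}_j$, the non-zero right-hand side of \eqref{eqn:compensation_horizontal_neg_eta_tilde} contributes the extra ``$\cv$''-term that turns $\Lchtpos$ into $\Lchtneg$, and the limiting system for $(\bld{c},\bld{d})$ is the one in (iv)(c), uniquely solvable by the same argument and giving the bound $\Lchneg$.

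The hard part will be exactly this desingularization: one must recognise that two different powers of $\al^{1/s}$ are at play (the scale $\al^{r/s}$ for \eqref{eqn:compensation_horizontal_pos_h} and $\al^{1+r/s}$ for \eqref{eqn:compensation_horizontal_pos_eta_i}), and then verify that the leading terms of the rescaled equations really assemble into the \emph{invertible} matrix $W$ rather than into a rank-deficient limit. This needs the expansions of $\be_i=\be_i(\al)$ and $\be_{s+1}=\be_{s+1}(\al)$, and of the eigenvector entries, to one order past leading, and it needs the ``boundary'' rows $r=0$ and $r=s-1$ — which behave differently because of the coupling through $\A{0,1}$, $\M{0,s-1}$ and $\Lo$ — to close the system up. Once $W$ is identified as the governing invertible matrix, unique solvability of the limiting system, and hence genuine convergence (not merely boundedness) of all the coefficient ratios, is immediate.
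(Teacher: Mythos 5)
Your proposal follows the paper's proof essentially step for step: parts (i)--(ii) by substituting the limiting ratios of Lemma~\ref{lem:convergence_alpha_beta} into the closed forms \eqref{eqn:compensation_vertical_pos_coefficients} and \eqref{eqn:compensation_vertical_neg_coefficient}, and parts (iii)--(iv) by exactly the rescaling the paper uses in Appendix~\ref{app:proof_convergence_coefficients_horizontal} (row $r$ of \eqref{eqn:compensation_horizontal_pos_h} by $\al^{-r/s}$ and of \eqref{eqn:compensation_horizontal_pos_eta_i} by $\al^{-1-r/s}$, with $\bld{b}_j$ the limits of $\h{r}/(\cv\al^{(r+1)/s})$), which produces the nondegenerate limiting system built from the Vandermonde matrix $W$ and the vector $\bld{w}_j$. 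The only cosmetic difference is how you deduce $\hb/\cv\to\zerob$ (via the rows of \eqref{eqn:compensation_horizontal_pos_h} rather than the tridiagonal back-substitution in $\B{0,0}$), which is an equivalent computation.
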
%

\begin{proof}%
(i) Divide both sides of \eqref{eqn:compensation_vertical_pos_coefficients} by $\ch$ and let $\al \downarrow 0$.

\noindent(ii) The proof is identical to the proof of (i).

\noindent(iii) Due to the length of this proof, the proof has been relegated to Appendix~\ref{app:proof_convergence_coefficients_horizontal}.

\noindent(iv) The proof is identical to the proof of (iii).
\end{proof}%

We have established the limiting behavior of ratios of compensation parameters, coefficients, eigenvectors and the vector on the horizontal axis. As we have seen in Lemma~\ref{lem:convergence_alpha_beta}, the limiting values of the eigenvectors $\ibpos{\al,\be}$ and $\ibneg{\al,\be}$ are finite. So, we can bound the absolute value of both of them by a constant not depending on $\al$ or $\be$. In doing so, one does not need to take into account the eigenvectors $\ibpos{\al,\be}$ and $\ibneg{\al,\be}$ to establish absolute convergence. Based on this observation and the asymptotic results derived above, we now show that the series appearing in \eqref{eqn:equilibrium_distribution_proportional} are absolutely convergent.

\begin{theorem}\label{thm:absolute_convergence_series}%
There exists a positive integer $N$ such that
\begin{enumerate}[label = \textup{(\roman*)}]%
\item The series
    \begin{equation}%
    \sum_{l = 0}^\infty \sum_{i = 1}^{(s+1)^l} |\al_{l,i}^m| \sum_{j = 1}^{s + 1} |\ch_{l,d(i) + j}| |\be_{l,d(i) + j}^{|n|}| \label{eqn:convergence_horizontal}
    \end{equation}%
    converges for all $m \ge 0, ~ |n| \ge 1$ with $m + |n| > N$.
\item The series
    \begin{equation}%
    \sum_{l = 0}^\infty \sum_{i = 1}^{(s + 1)^{l + 1}} |\cv_{l + 1,i}| |\al_{l + 1,i}^m| |\be_{l,i}^{|n|}| \label{eqn:convergence_vertical}
    \end{equation}%
    converges for all $m \ge 0, ~ |n| \ge 1$ with $m + |n| > N$.
\item The series
    \begin{equation}%
    \sum_{l = 0}^\infty \sum_{i = 1}^{(s+1)^l} |\al_{l,i}^m| |\hb_{l,i}| \label{eqn:convergence_boundary}
    \end{equation}%
    converges for all $m \ge N$.
\item The series
    \begin{equation}%
    \sum_{m + |n| > N} p(m,n,r), \quad r = 0,1,\ldots,s-1 \label{eqn:convergence_sum_over_all_states}
    \end{equation}%
    converges absolutely, where $p(m,n,r)$ is given in \eqref{eqn:equilibrium_distribution_proportional}.
\end{enumerate}%
\end{theorem}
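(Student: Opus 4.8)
The plan is to compare each of the four series with a single geometric series in the tree level $l$. Two ingredients drive everything: a uniform exponential \emph{decay} rate $c\in(0,1)$ for the parameters, and a uniform exponential \emph{growth} rate $K\ge 1$ for the coefficients. The decay is already available from Corollary~\ref{cor:exponential_convergence_alpha_beta}(ii): there is $c\in(0,1)$ with $|\al_{l,i}|\le\bar{\al}_l<c^l$ and $|\be_{l,i}|\le\bar{\be}_l<c^l$ for \emph{every} node of level $l$. For the growth, I would read off from Lemma~\ref{lem:convergence_coefficients} that each compensation step multiplies the magnitude of the coefficient(s) it produces by a bounded factor: set $K_0$ to be the maximum of $|\Lcv|,|\Lcvt|,|\Lchtpos|,\Lchpos,|\Lchtneg|,\Lchneg$ and $1$; since $\bar{\al}_l\downarrow 0$ and $\bar{\be}_l\downarrow 0$ by Corollary~\ref{cor:exponential_convergence_alpha_beta}, Lemma~\ref{lem:convergence_coefficients} furnishes a level $N_0$ beyond which each of the ratios $|\nu/\eta|$, $|\eta_i/\nu|$, $|\eta_{s+1}/\nu|$ and $|\hb|/|\nu|$ (the last because $\hb/\cv\to\zerob$) is at most $K_1\defi K_0+1$. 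Since an $\al$- or $\be$-node of level $l\ge N_0$, and likewise the vector $\hb$ attached to such an $\al$-node, is produced from a node of level $N_0$ by at most $2(l-N_0)+1$ compensation steps, while the finitely many nodes of levels $<N_0$ carry finitely many finite coefficients, one obtains a constant $C$ with $|\eta_{l,i}|,|\nu_{l,i}|,|\hb_{l,i}|\le C K^l$ for $K\defi K_1^2$, uniformly in $l$ and $i$. Finally, Lemma~\ref{lem:convergence_alpha_beta} shows the eigenvectors $\ibpos{\al,\be}$ and $\ibneg{\al,\be}$ converge componentwise to finite limits as the parameters vanish, so their entries are bounded by a constant $E$ uniformly over the whole tree.

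Given these bounds the estimates become routine. For (i), the level-$l$ block of \eqref{eqn:convergence_horizontal} consists of $(s+1)^{l+1}$ terms, each at most $CK^l(c^l)^m(c^l)^{|n|}=CK^l c^{l(m+|n|)}$, so the series is dominated by $(s+1)C\sum_{l\ge 0}\big[(s+1)K\,c^{m+|n|}\big]^l$, which converges whenever $(s+1)K\,c^{m+|n|}<1$, i.e.\ whenever $m+|n|>N_1\defi\log\big((s+1)K\big)/\log(1/c)$. Parts (ii) and (iii) are handled identically (in (iii) there is no $\be$-factor, so one only needs $m>N_1$; in (ii) the coefficient and $\al$-factor sit one level higher, which merely shifts the geometric ratio by a constant), giving the same type of threshold. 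For (iv), I would bound $|p(m,n,r)|$ by $E$ times the pointwise bounds used in (i)--(ii), and then sum over all $(m,n)$ with $m+|n|>N$ using Tonelli (all summands being nonnegative): collecting the $O(t)$ lattice points with $m+|n|=t$, the tail $\sum_{t>N}O(t)\,c^{lt}$ is at most $C'c^{l(N+1)}(1-c^l)^{-2}\le C''c^{l(N+1)}$ for $l\ge 1$, and $\sum_l(s+1)^{l+1}K^l c^{l(N+1)}$ again converges once $(s+1)K\,c^{N+1}<1$. Choosing $N$ to be any integer larger than all of these finitely many thresholds — and noting that $N_1$ depends only on $c$, $K$ and $s$, not on $r$ — completes the argument.

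The step I expect to be fiddly, rather than deep, is \textbf{turning the asymptotic ratio bounds of Lemma~\ref{lem:convergence_coefficients} into the clean uniform bound $|\text{coefficient}|\le CK^l$ on the whole coefficient tree}. Those bounds are limits as $\al\downarrow 0$ (resp.\ $\be\downarrow 0$), so they are genuinely uniform only past the level $N_0$, and one must dispose of the finitely many low levels by a separate, elementary finiteness argument; one must also track the bookkeeping that a horizontal compensation step turns one coefficient $\nu$ into $s+1$ coefficients $\eta_1,\dots,\eta_{s+1}$ together with a vector $\hb$, each still governed by a bounded ratio, so that the effective growth constant is $K=K_1^2$ and not something worse. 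The real mathematical content — that the branching factor $(s+1)^l$ and the coefficient growth $K^l$ are only \emph{geometric} in $l$, whereas the parameters contribute a factor $c^{l(m+|n|)}$ that decays faster for large $m+|n|$ — is already packaged in Corollary~\ref{cor:exponential_convergence_alpha_beta} and Lemmas~\ref{lem:convergence_alpha_beta}--\ref{lem:convergence_coefficients}; beyond that, the proof is just comparison with a geometric series.
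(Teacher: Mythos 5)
Your argument is correct, but it proves the theorem by a genuinely different and more elementary route than the paper. The paper does not pass through a uniform envelope $|\text{coefficient}|\le CK^l$, $|\al_{l,i}|,|\be_{l,i}|\le c^l$ at all: instead it applies a ratio test directly on the tree, forming for each term the vector of ratios to its $s+1$ children, letting $l\to\infty$ so that these ratios tend (by Lemmas~\ref{lem:convergence_alpha_beta} and \ref{lem:convergence_coefficients}) to an explicit $(s+1)\times(s+1)$ matrix $R^{(1)}(m,n)$ (resp.\ $R^{(2)}(m,n)$, $R^{(3)}(m)$) whose entries are products of the limiting constants $\Lcv,\Lcvt,\Lchpos,\dots$ with powers of $|\Lal/\Lbe|$, $|\Lalt/\Lbet|$, etc., and taking $N$ to be the smallest integer for which the spectral radii of these matrices drop below one. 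What the paper's route buys is precisely this sharper, computable $N$: it distinguishes the four types of parent--child transitions (positive$\to$positive, positive$\to$negative, \dots) instead of lumping them into the worst case $(s+1)\cdot K\cdot c^{m+|n|}$, and in practice yields $N=1$ (Table~\ref{tbl:numerical_N}). Your $N$ is only an existence threshold and would be much larger; that suffices for Theorem~\ref{thm:absolute_convergence_series} as stated, but note that Theorem~\ref{thm:equilibrium_distribution} and the numerical scheme are phrased in terms of the minimal spectral-radius $N$, so your bound could not simply be substituted downstream without weakening those statements. Two small points to tidy up: in part (iv) your clean tail estimate $\sum_{t>N}O(t)c^{lt}\le C''c^{l(N+1)}$ is only valid for $l\ge 1$, so the $l=0$ block must be handled separately (it is a finite sum of geometric series with ratios $|\al_{0,1}|,|\be_{0,j}|<1$, so this is immediate; alternatively, sharpen Corollary~\ref{cor:exponential_convergence_alpha_beta}(ii) to $\bar\al_l,\bar\be_l\le\rho^{1+s}c^l$); and your passage from the limiting ratios of Lemma~\ref{lem:convergence_coefficients} to a bound valid uniformly over all $(s+1)^l$ nodes of a level should invoke explicitly that these ratios are functions of $\al$ (or $\be$) alone and that $\bar\al_l,\bar\be_l\to 0$ uniformly in the node index, which is exactly what Corollary~\ref{cor:exponential_convergence_alpha_beta} provides. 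With those details filled in, the proof is complete.
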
%

\begin{proof}%
(i) We can view the series as an infinite tree with $s + 1$ roots and each term has $s + 1$ children. We define the ratios of a term with each of it's descendants as
\begin{equation}%
R^{(1)}_{l,i,j \to k}(m,n) \defi \frac{|\ch_{l + 1,d(d(i) + j) + k}| |\al_{l + 1,d(i) + j}^m| |\be_{l + 1,d(d(i) + j) + k}^{|n|}|}{|\ch_{l,d(i) + j}| |\al_{l,i}^m| |\be_{l,d(i) + j}^{|n|}|}.
\end{equation}%
We multiply the ratio $R^{(1)}_{l,i,j \to k}(m,n)$ by $\Bigl| \frac{\cv_{l,d(i) + j}}{\cv_{l,d(i) + j}} \frac{\al_{l + 1,d(i) + j}^{|n|}}{\al_{l + 1,d(i) + j}^{|n|}} \frac{\al_{l,i}^{|n|}}{\al_{l,i}^{|n|}} \frac{\be_{l,d(i) + j}^{m + |n|}}{\be_{l,d(i) + j}^{m + |n|}} \Bigr|$, yielding
\begin{equation}%
R^{(1)}_{l,i,j \to k}(m,n) = \Bigl| \frac{\ch_{l + 1,d(d(i) + j) + k}}{\cv_{l,d(i) + j}} \frac{\cv_{l,d(i) + j}}{\ch_{l,d(i) + j}} \frac{\al_{l + 1,d(i) + j}^{m + |n|}}{\be_{l,d(i) + j}^{m + |n|}} \frac{\be_{l,d(i) + j}^{m + |n|}}{\al_{l,i}^{m + |n|}} \frac{\be_{l + 1,d(d(i) + j) + k}^{|n|}}{\al_{l + 1,d(i) + j}^{|n|}} \frac{\al_{l,i}^{|n|}}{\be_{l,d(i) + j}^{|n|}} \Bigr|.
\end{equation}%
As $l \to \infty$ we obtain by Lemmas~\ref{lem:convergence_alpha_beta} and \ref{lem:convergence_coefficients} that $\lim_{l \to \infty} R^{(1)}_{l,i,j \to k}(m,n) \le R^{(1)}_{j \to k}(m,n)$, where the inequality is element-wise, and
\begin{equation}%
R^{(1)}_{j \to k}(m,n) = \begin{cases}%
|\Lchpos| |\Lcv| |\Lal / \Lbe|^{m + |n|}, & j = 1,2,\ldots,s, ~ k = 1,2,\ldots,s, \\
|\Lchneg| |\Lcvt| |\Lal|^{|n|} |\Lalt/\Lbet|^m |1/\Lbet|^{|n|}, & j = s + 1, ~ k = 1,2,\ldots,s, \\
|\Lchtpos| |\Lcv| |\Lal/\Lbe|^m |1/\Lbe|^{|n|} |\Lalt|^{|n|}, & j = 1,2,\ldots,s, ~ k = s + 1, \\
|\Lchtneg| |\Lcvt| |\Lalt / \Lbet|^{m + |n|}, & j = s + 1, ~ k = s + 1.
\end{cases}%
\end{equation}%
The convergence of the series is determined by the spectral radius of the corresponding matrix of ratios $R^{(1)}(m,n) \defi (R^{(1)}_{j \to k}(m,n))_{j,k = 1,2,\ldots,s+1}$. If the spectral radius of the matrix $R^{(1)}(m,n)$ is less than one, the series converges. For more details, see e.g.~\cite[Section~9]{CompApproachAsymmetric_Adan1991} or \cite[proof of Theorem~5.2]{CompApproachErlangArrivals_Adan2013}. Observe that $|\Lal|,|1/\Lbe|,|\Lalt|,|1/\Lbet| < 1$, but the values of the limiting ratios $|\Lcv|,|\Lcvt|,|\Lchpos|,|\Lchneg|,|\Lchtpos|,|\Lchtneg|$ can be larger than one. The spectral radius of the matrix $R^{(1)}(m,n)$ depends on $m + |n|$ and thus we can find an integer $N$ with $m + n > N$ for which the spectral radius is less than one, ensuring the series converges for $m + |n| > N$.

\noindent(ii) Using a similar analysis as in (i), we define the ratio as
\begin{equation}%
R^{(2)}_{l,i,j \to k} (m,n) \defi \frac{|\cv_{l + 2,d(d(i) + j) + k}| |\al_{l + 2,d(d(i) + j) + k}^m| |\be_{l + 1,d(d(i) + j) + k}^{|n|}|}{|\cv_{l + 1,d(i) + j}| |\al_{l + 1,d(i) + j}^m| |\be_{l,d(i) + j}^{|n|}|}.
\end{equation}%
For $l \to \infty$ we have that $\lim_{l \to \infty} R^{(2)}_{l,i,j \to k}(m,n) \le R^{(2)}_{j \to k}(m,n)$ and
\begin{equation}%
\hspace*{-0.7em}R^{(2)}_{j \to k}(m,n) = \begin{cases}%
|\Lchpos| |\Lcv| |\Lal/\Lbe|^{m + |n|}, & j = 1,2,\ldots,s, ~ k = 1,2,\ldots,s, \\
|\Lchneg| |\Lcv| |\Lal|^{|n|} |\Lal/\Lbe|^m |1/\Lbet|^{|n|}, & j = s + 1, ~ k = 1,2,\ldots,s, \\
|\Lchtpos| |\Lcvt| |1/\Lbe|^{|n|} |\Lalt|^{|n|} |\Lalt/\Lbet|^m, & j = 1,2,\ldots,s, ~ k = s + 1, \\
|\Lchtneg| |\Lcvt| |\Lalt/\Lbet|^{m + |n|}, & j = s + 1, ~ k = s + 1.
\end{cases}%
\end{equation}%
The spectral radius of the matrix $R^{(2)}(m,n) \defi (R^{(2)}_{j \to k}(m,n))_{j,k = 1,2,\ldots,s + 1}$ depends on $m + |n|$ and thus we can find an integer $N$ with $m + n > N$ for which the spectral radius is less than one.

\noindent(iii) We can rewrite \eqref{eqn:convergence_boundary} as
\begin{equation}%
\sum_{l = 0}^\infty \sum_{i = 1}^{(s + 1)^l} |\al_{l,i}^m| |\hb_{l,i}|  = |\al_{0,1}^m| |\hb_{0,1}| + \sum_{l = 1}^\infty \sum_{i = 1}^{(s + 1)^l} |\cv_{l,i}| |\al_{l,i}^m| \frac{|\hb_{l,i}|}{|\cv_{l,i}|}.
\end{equation}%
Lemmas~\ref{lem:convergence_coefficients}(iii)(a) and (iv)(a) show that $|\hb_{l,i}|/|\cv_{l,i}| \to \zerob$ and thus we can bound it from above and need not consider it when proving convergence of the series. Proving convergence of
\begin{equation}%
\sum_{l = 0}^\infty \sum_{i = 1}^{(s + 1)^{l + 1}} |\cv_{l + 1,i}| |\al_{l + 1,i}^m|
\end{equation}%
establishes convergence of \eqref{eqn:convergence_boundary}. Exploiting the similarity with the series \eqref{eqn:convergence_vertical}, we define the ratio $R^{(3)}_{l,i,j \to k} (m) \defi R^{(2)}_{l,i,j \to k}(m,0)$. Hence, the limiting ratios can be bound from above: $\lim_{l \to \infty} R^{(3)}_{l,i,j \to k} (m) \le R^{(3)}_{j \to k} (m) = R^{(2)}_{j \to k} (m,0)$. The spectral radius of the matrix $R^{(3)}(m) \defi (R^{(3)}_{j \to k}(m))_{j,k = 1,2,\ldots,s + 1}$ depends on $m$ and thus we can find an integer $N$ with $m \ge N$ for which the spectral radius is less than one.

\noindent(iv) This follows straightforwardly, along the same lines as in \cite[Section~9]{CompApproachAsymmetric_Adan1991}.
\end{proof}%

\begin{remark}\label{rem:choosing_N}%
We note that the series in Theorem~\ref{thm:absolute_convergence_series}(i) (without the absolute values) corresponds to the sum of the first series in \eqref{eqn:equilibrium_distribution_proportional_pos} and the first series in \eqref{eqn:equilibrium_distribution_proportional_neg}. The series in Theorem~\ref{thm:absolute_convergence_series}(ii) (without the absolute values) corresponds to the sum of the second series in \eqref{eqn:equilibrium_distribution_proportional_pos} and the second series in \eqref{eqn:equilibrium_distribution_proportional_neg}. So, proving convergence of the series in Theorem~\ref{thm:absolute_convergence_series} establishes convergence of the series in \eqref{eqn:equilibrium_distribution_proportional}.
\end{remark}%

\begin{remark}%
The index $N$ is the \textit{minimal non-negative integer} for which the spectral radii of the matrices $R^{(1)}(m,n)$ and $R^{(2)}(m,n)$ are both less than one for $m + |n| > N$ and the spectral radius of $R^{(3)}(m)$ is less than one for $m \ge N$. So, for all states $(m,n)$ with $m + |n| > N$ and $(N,0)$ the series in \eqref{eqn:equilibrium_distribution_proportional} converges. In general, the index $N$ is small. In Table~\ref{tbl:numerical_N} we list the index $N$ for fixed $q$, while varying the values of $s$ and $\rho$. Note that the area of convergence might be bigger than the one based on $N$, since $N$ is based on $R^{(1)}(m,n)$, $R^{(2)}(m,n)$ and $R^{(3)}(m)$ which are \textit{upper bounds} on the rate of convergence.

\begin{table}%
\centering%
\begin{tabular}{c|ccccc|ccccc}%
$s$    & 2   &     &     &     &     & 5   &     &     &     &     \\
$\rho$ & 0.1 & 0.3 & 0.5 & 0.7 & 0.9 & 0.1 & 0.3 & 0.5 & 0.7 & 0.9 \\
\hline
$N$    & 1   & 1   & 1   & 1   & 1   & 1   & 1   & 1   & 1   & 1
\end{tabular}%
\caption{The index $N$ for fixed $q = 0.4$ and varying $s$ and $\rho$.}%
\label{tbl:numerical_N}%
\end{table}%
\end{remark}%

We are now in the position to formulate the main result of the paper.

\begin{theorem}\label{thm:equilibrium_distribution}%
For all states $(m,n,r), ~ m \in \Nat_0, ~ n \in \Int, ~ r = 0,1,\ldots,s - 1$ and $m + |n| > N$ including $m = N, ~ n = 0$, the equilibrium probabilities $\pb{m,n}$ are proportional, up to a multiplicative constant $C$, to the expressions in \eqref{eqn:equilibrium_distribution_proportional}, see \eqref{eqn:equilibrium_distribution_introduction}, where $C$ is the normalization constant of the equilibrium distribution. The remaining $\pb{m,n}, ~ m + |n| \le N$ are determined by the finite system of equilibrium equations for the states $(m,n)$ with $m + |n| \le N$, where $N$ is the minimal non-negative integer for which the spectral radii of the matrices $R^{(1)}(m,n)$ and $R^{(2)}(m,n)$ are both less than one for $m + |n| > N$ and the spectral radius of $R^{(3)}(m)$ is less than one for $m \ge N$.
\end{theorem}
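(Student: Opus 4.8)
The plan is to build, with the help of \eqref{eqn:equilibrium_distribution_proportional}, a non-trivial summable solution of the \emph{entire} system of equilibrium equations, and then to conclude by uniqueness of the stationary distribution of the Markov process, which is irreducible and positive recurrent by \eqref{eqn:stability_condition}. First I would carry out the usual compensation bookkeeping to show that the series is a \emph{formal} solution. By Lemma~\ref{lem:initial_solution} the initial term satisfies \eqref{eqn:equilibrium_eqs_I+}--\eqref{eqn:equilibrium_eqs_H}. A vertical compensation step (Lemma~\ref{lem:compensation_vertical}) appends, to a term satisfying the positive (resp.\ negative) inner equations, compensation terms that keep those inner equations intact and cancel the residual on the corresponding vertical boundary, at the price of a new residual on the horizontal boundary; a horizontal compensation step (Lemma~\ref{lem:compensation_horizontal}) appends terms that keep \eqref{eqn:equilibrium_eqs_I+}--\eqref{eqn:equilibrium_eqs_H} intact and cancel that horizontal residual, producing a fresh vertical one, which the next level of the $(s+1)$-fold tree absorbs. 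Summed over the whole tree these residuals cancel in consecutive pairs, so \eqref{eqn:equilibrium_distribution_proportional} satisfies, at least formally, every equilibrium equation \eqref{eqn:equilibrium_eqs_I+}--\eqref{eqn:equilibrium_eqs_V-}; the only equations left unaddressed are the three at the boundary states $(0,0)$ and $(0,\pm1)$ near the origin.

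\textbf{Making it rigorous on the far region.} By Theorem~\ref{thm:absolute_convergence_series} the series converges absolutely at every state of $\mathcal{R}\defi\{(m,n):m+|n|>N\}\cup\{(N,0)\}$ and is absolutely summable over $\mathcal{R}$. Since each equilibrium equation involves only finitely many states, absolute convergence legitimises interchanging the (finite) equilibrium operator with the (infinite) sum, so on $\mathcal{R}$ the formal identities become genuine identities between numbers. I would then use that the state space is layered by $\ell\defi m+|n|$: every transition changes $\ell$ by at most $1$, and the only transitions that increase $\ell$ start from a state with $n=0$; consequently every equilibrium equation at a state with $m+|n|>N$ involves only states of $\mathcal{R}$. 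Hence \eqref{eqn:equilibrium_distribution_proportional} solves all equilibrium equations at states with $m+|n|>N$.

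\textbf{Closing near the origin and identification.} Define $\pb{m,n}$ on $\mathcal{R}$ by \eqref{eqn:equilibrium_distribution_proportional}, and on the finite set $S_0\defi\{(m,n):m+|n|\le N\}\setminus\{(N,0)\}$ as the solution of the linear system formed by the equilibrium equations at the states of $S_0$ with the already-known values on $\mathcal{R}$ substituted in. Its coefficient matrix is minus the restriction of the generator to $S_0$, a nonsingular M-matrix (from any state of $S_0$ the chain leaves $S_0$ in finitely many steps, by irreducibility), so the system has a unique solution. The one remaining equation, at $(N,0)$, then holds automatically: the sum of all equilibrium equations is identically zero, the ones at states with $m+|n|>N$ hold by the previous step, and the ones at states of $S_0$ hold by construction, so the one at $(N,0)$ must hold too. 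Thus $\pb{m,n}$ solves every equilibrium equation; it is summable by Theorem~\ref{thm:absolute_convergence_series}(iv) together with the finitely many states of $S_0$, and it is non-trivial since the initial term has $\ch_{s+1}=1$. By \eqref{eqn:stability_condition} the equilibrium distribution exists and is the unique summable solution of the equilibrium equations up to a scalar, so $\pb{m,n}$ equals $C^{-1}$ times the equilibrium distribution, with $C$ the normalization constant — which is the assertion, the remaining $\pb{m,n}$ with $m+|n|\le N$ being exactly the solution of the finite system just described.

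\textbf{Where the difficulty lies.} All the genuinely hard analysis — the exponential, uniform decay of the parameter tree and the absolute convergence of the series — has already been done in Corollary~\ref{cor:exponential_convergence_alpha_beta}--Theorem~\ref{thm:absolute_convergence_series}. What still needs care is the first step (verifying that the infinite tree of compensation terms really telescopes to a formal solution of all of \eqref{eqn:equilibrium_eqs_I+}--\eqref{eqn:equilibrium_eqs_V-}, not merely that each finite truncation does) and the interface argument in the third step (checking that the finite linear system near the origin is of exactly the right size, uniquely solvable, and consistent with the series part); the layered-state-space observation is precisely what makes this last point clean, by guaranteeing that no equilibrium equation at a state with $m+|n|>N$ can "reach into" $S_0$.
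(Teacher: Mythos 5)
Your route is genuinely different from the paper's: the paper compares the series with the stationary distribution of a \emph{modified} process on $\mathcal{L}_N=\{m+|n|>N\}\cup\{(N,0,s-1)\}$ (in the spirit of the modified model of Lemma~\ref{lem:modified_model_product_form}, with the downward transitions from the diagonal $m+|n|=N+1$ redirected into the single state $(N,0,s-1)$), and then relates restricted and unrestricted equilibrium probabilities; you instead extend the series to a \emph{global} summable solution by solving a finite linear system near the origin and invoke uniqueness of the summable invariant measure of the full ergodic process. Both routes lean on Theorem~\ref{thm:absolute_convergence_series} for all the hard analysis, and your layering observation ($\ell=m+|n|$ is non-decreasing along every transition except those out of the states $(m,0,s-1)$) is exactly the structural fact the paper's choice of $\mathcal{L}_N$ encodes. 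Your M-matrix justification of the solvability of the finite system is a welcome detail the paper leaves implicit.

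There is, however, one concrete gap in your closing step. You place the entire column $(N,0,r)$, $r=0,1,\ldots,s-1$, in the series region $\mathcal{R}$, so the balance equations left unaddressed after solving the finite system on $S_0$ are the $s$ equations at $(N,0,0),\ldots,(N,0,s-1)$ --- not one equation. The identity ``sum of all balance equations equals zero'' supplies exactly \emph{one} linear relation among these $s$ residuals (the left null space of the generator of an irreducible chain is one-dimensional), so for $s\ge 2$ you cannot conclude that each residual vanishes, and hence cannot conclude that your candidate solves all equilibrium equations. The repair is to sharpen your own layering remark: the only $\ell$-increasing transitions emanate from the states $(m,0,s-1)$ (not from all of $(m,0,\cdot)$), so the equations at level $N+1$ reach down only to $(N,0,s-1)$. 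Take $\mathcal{R}=\{m+|n|>N\}\cup\{(N,0,s-1)\}$ and put $(N,0,r)$, $r\le s-2$, into $S_0$; then exactly one equation is left over and your dependency argument closes it. (This also exposes a wrinkle in the theorem's phrasing that the paper's own proof shares: with this construction the series representation at $(N,0)$ is only \emph{established} for the component $r=s-1$, the remaining components of $\pb{N,0}$ being delivered by the finite system.)
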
%

\begin{proof}%
This proof is similar to the proof in \cite[proof of Theorem~5.3]{CompApproachErlangArrivals_Adan2013} and \cite[Section~11]{CompApproachAsymmetric_Adan1991}, but we include it here for completeness. Define $\mathcal{L}_{N} = \{ (m,n) \mid m \in \Nat_0, ~ n \in \Int, m + |n| > N \} \cup \{(N,0,s - 1)\}$ and note the similarity with the set of states defined in the proof of Lemma~\ref{lem:modified_model_product_form} and the associated transition rate diagram in Figure~\ref{fig:trd_modified_model}. Then $\mathcal{L}_{N}$ is a set of states for which the series in \eqref{eqn:equilibrium_distribution_proportional} converge absolutely. The restricted stochastic process on the set $\mathcal{L}_{N}$ is an irreducible Markov process, whose associated equilibrium equations are identical to the equations of the original unrestricted process on the set $\mathcal{L}_{N}$, expect for the equilibrium equation of state $(N,0,s - 1)$. Hence, the process restricted to $\mathcal{L}_{N}$ is ergodic so that the series in \eqref{eqn:equilibrium_distribution_proportional} can be normalized to produce the equilibrium distribution of the restricted process on $\mathcal{L}_{N}$. Since the set $\Nat_0 \times \Int \times \{0,1,\ldots,s - 1\} \setminus \mathcal{L}_{N}$ is finite, it follows that the original process is ergodic and relating appropriately the equilibrium probabilities of the unrestricted and restricted process completes the proof.
\end{proof}%

The following remark is in line with Remark~\ref{rem:no_curse_of_dimensionality}.

\begin{remark}[An efficient numerical scheme]%
\begin{figure}%
\centering%
\includegraphics{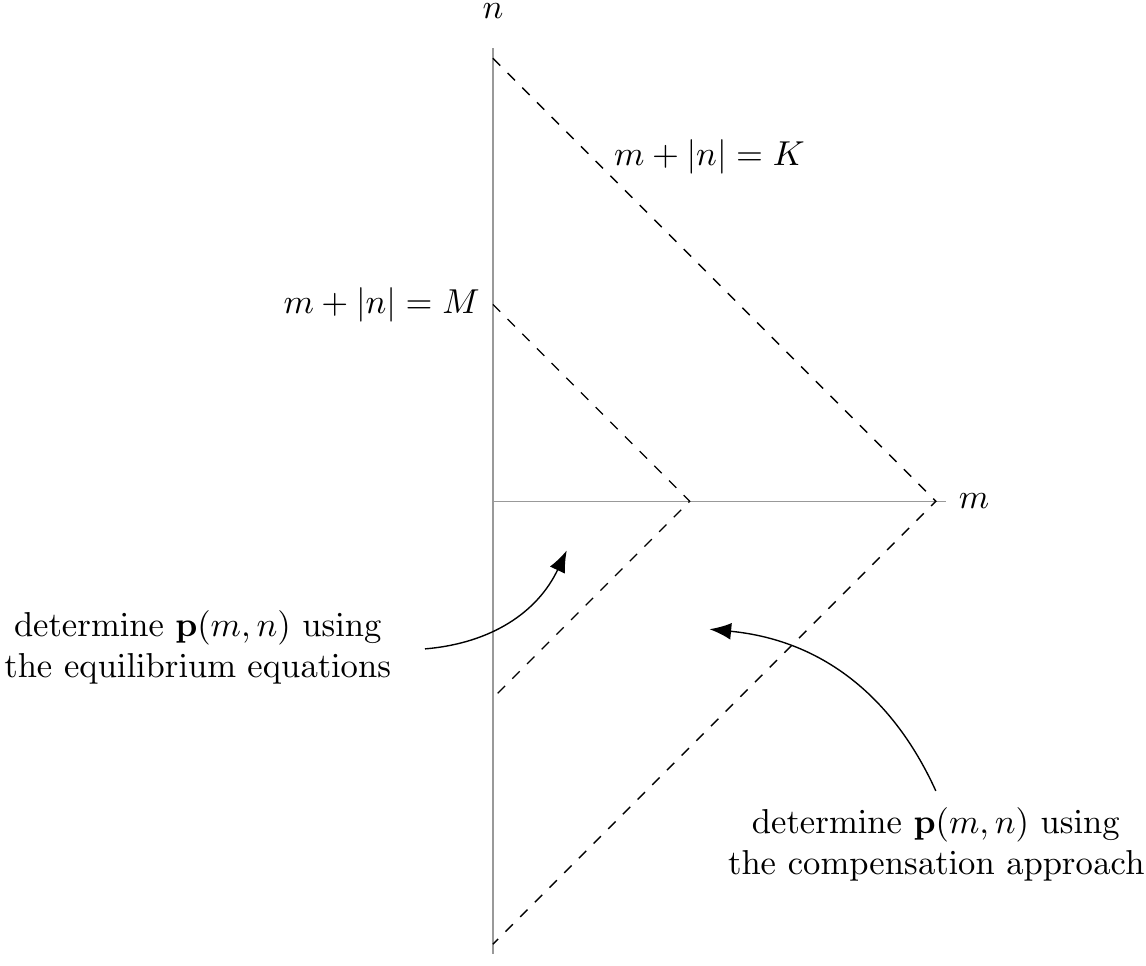} 
\caption{Regions of the efficient numerical scheme.}%
\label{fig:numerical_scheme_choices_integers}%
\end{figure}%

The following scheme exploits the fact that the rate of convergence of the series in \eqref{eqn:equilibrium_distribution_proportional} increases as $m + |n|$ increases. So, further away from the origin, fewer compensation steps $L$ are needed to achieve the same accuracy according to \eqref{eqn:equilibrium_distribution_numerical} and \eqref{eqn:accuracy_condition_numerical}. This property was seen in Section~\ref{sec:numerical_results} and in particular Figure~\ref{fig:no_compensation_steps_needed}. Denote a triangular set of states as $\mathcal{T}_x \defi \{ (m,n) \mid m \in \Nat_0, ~ n \in \Int, ~ m + |n| \le x \}$. Figure~\ref{fig:numerical_scheme_choices_integers} serves as a visual aid.
\begin{enumerate}[label = (\roman*)]%
\item Determine the minimal non-negative integer $N$ for which the spectral radii of the matrices $R^{(1)}(m,n)$ and $R^{(2)}(m,n)$ are both less than one for $m + |n| > N$ and the spectral radius of $R^{(3)}(m)$ is less than one for $m \ge N$.
\item Select integers $M$ and $K$ such that $N < M < K$.
\item Determine $\pb{m,n}, ~ (m,n) \in \mathcal{T}_K \setminus \mathcal{T}_M$ according to \eqref{eqn:equilibrium_distribution_numerical} with $C = 1$ and $L$ the minimal integer such that \eqref{eqn:accuracy_condition_numerical} holds.
\item Determine $\pb{m,n}, ~ (m,n) \in \mathcal{T}_M$ from the equilibrium equations for the states $(m,n) \in \mathcal{T}_M$.
\item Normalize the equilibrium distribution by dividing each equilibrium probability by the sum $\sum_{(m,n) \in \mathcal{T}_K} \pb{m,n} \oneb$.
\end{enumerate}%
The integer $L$ in step (iii) depends on $M$. As $M$ increases, $L$ decreases or stays constant, but the size of the system of equilibrium equations in step (iv) increases. This tradeoff is clearly in favor of selecting a larger $M$: decreasing $L$ decreases the number of computation steps exponentially, whereas the size of the system of equilibrium equations increases polynomially with $M$. As $K$ increases, the equilibrium probabilities become more accurate. $K$ can be chosen arbitrarily large; it has little to no impact on the performance.
\end{remark}%


\section{Conclusion}%
\label{sec:conclusion}%

We have studied a queueing system with two non-identical servers with service rates $1$ and $s \in \Nat$, respectively, Poisson arrivals and the SED routing policy. This policy assigns an arriving customer to the queue with the smallest expected delay, i.e.~waiting time plus service time. The SED routing policy is a natural and simple routing policy that balances the load for the two non-identical servers. Although not always optimal, SED performs well at both ends of system utilization range. Moreover, SED routing is asymptotically optimal in the heavy traffic regime.

The SED system can be modeled as an inhomogeneous random walk in the quadrant. By appropriately transforming the state space, we mapped the two-dimensional state space into a half-plane with a finite third dimension. The random walks on each quadrant are different, yet homogeneous inside each quadrant. Extending the compensation approach to this three-dimensional setting, we showed in this paper that the equilibrium distribution of the joint queue length can be represented as a series of product-form solutions. These product-form solutions are generated iteratively to compensate for the error introduced by its preceding product-form term.

The analysis presented in this paper proves that the compensation approach can be applied in the context of a three-dimensional state space. We believe that a similar analysis can be used to investigate general conditions for applicability of the compensation approach for three-dimensional Markov processes. These conditions will be comparable to the three conditions in Section~\ref{sec:introduction}. Furthermore, the compensation approach can possibly be extended to the case where all three dimensions are infinite, paving the way for performance analysis of higher-dimensional Markov processes.

The insights gained for the SED system with two servers, specifically, the series expressions for the equilibrium probabilities, can be used to develop approximations of the performance of heterogeneous multi-server systems with a SED routing protocol. These approximations can be derived along the same lines as in \cite{JSQ_WebServerFarms_Gupta2007}. An approximate performance analysis for a system with two servers can be found in \cite{Selen2016_GJSQ}.

Another interesting direction for future research is to study rare events or tail probabilities in the SED system, in a similar way as done for JSQ systems in \cite{GeometricDecay_Li2007}. Since the compensation approach determines the complete expansion of each equilibrium probability, one can approximate rare events with arbitrary precision.


\subsection*{Acknowledgements}%
\label{subsec:acknowledgement}%

The authors would like to thank A.J.E.M.~Janssen for proving step (c) of Lemma~\ref{lem:location_number_zeros_inner_pos}(i). This work was supported by an NWO free competition grant, an ERC starting grant and the NWO Gravitation Project NETWORKS.


\appendix%


\section{Proof of step (c) of Lemma~\ref{lem:location_number_zeros_inner_pos}(i)}%
\label{app:proof_single_root_beta_i}%

The following argument was communicated to us by A.J.E.M.~Janssen. Our goal is to show that for every $\al$ with $|\al| \in (0,1)$ the equation
\begin{equation}%
\frac{\al\be(1 + s)(\rho + 1) - \be^2(1 + s)\rho - \al^2}{\al\be s} = \unit_i \be^{1/s}, \quad i = 1,2,\ldots,s, \label{eqn:app_proof_single_beta_i_original}
\end{equation}%
has at least one root $\be_i$ with $|\be_i| < |\al|$ for each $i$. To that end, we consider the more general formulation
\begin{equation}%
\sigma = f(z) = \frac{E}{z^t} (\Lbe - z) (z - \Lal) \label{eqn:app_proof_single_beta_i_general_formulation}
\end{equation}%
with $z = \be/\al$, $E = \frac{(1 + s)\rho}{s} > 0$, $t = \frac{s + 1}{s} \in (1,2]$ and $\Lrootpos_{\scriptscriptstyle \pm}$ is defined in \eqref{eqn:limiting_roots_pos} with the properties
\begin{equation}%
0 < \Lal < 1, \quad \Lbe > 1, \quad \Lbe + \Lal = \frac{1 + \rho}{\rho}, \quad \Lbe \Lal = \frac{1}{(1 + s)\rho}.
\end{equation}%
One retrieves the original form \eqref{eqn:app_proof_single_beta_i_original} by setting $\sigma = \unit_i \al^{1/s}$.

The proof consists of three main steps:
\begin{enumerate}[label = (\alph*)]%
\item We derive the inverse function of $\sigma = f(z)$ on a neighborhood of $z = \Lal$ using the Lagrange inversion theorem \cite[Section~2.2]{LagrangeInversionTheorem_Bruijn1970}. We establish that $z = g(\sigma)$ where $g(\sigma)$ is a power series that is analytic on a neighbourhood of $\sigma = 0$ and has positive power series coefficients.
\item Employing Pringsheim's theorem \cite[Theorem~1]{PringsheimTheorem_Bateman2000}, we are able to extend the radius of convergence for the power series $g(\sigma)$ to $\sigma_{\textup{max}} > 1$.
\item Finally, we establish that for $\sigma = \unit_i \alpha^{1/s}, ~ i = 1,2,\ldots,s$ satisfying \eqref{eqn:app_proof_single_beta_i_general_formulation}, the corresponding $z$ has $|z| < 1$.
\end{enumerate}%

\noindent (a) We note that $f(z)$ is analytic near $z = \Lal$. So, by the Lagrange inversion theorem, we have in a neighborhood of $\sigma = f(\Lal) = 0$
\begin{align}%
z = g(\sigma) &= \Lal + \sum_{n = 1}^\infty \frac{\sigma^n}{n!} \frac{\dinf^{n - 1}}{\dinf z^{n - 1}} \Bigl( \frac{z - \Lal}{\frac{E}{z^t}(\Lbe - z) (z - \Lal)} \Bigr)^n \bigg\vert_{z = \Lal}  \notag \\
&= \Lal + \sum_{n = 1}^\infty \frac{(\sigma/E)^n}{n!} \frac{\dinf^{n - 1}}{\dinf z^{n - 1}} \frac{z^{n t}}{(\Lbe - z)^n} \bigg\vert_{z = \Lal}.
\end{align}%
Note that $g(\cdot)$ is analytic in a neighborhood of $\sigma = f(\Lal) = 0$. Let $g_1(z) = z^{nt}$ and $g_2(z) = (\Lbe - z)^{-n}$ so that we can write
\begin{equation}%
\frac{\dinf^{n - 1}}{\dinf z^{n - 1}} g_1(z) g_2(z) = \sum_{k = 0}^{n - 1} \binom{n - 1}{k} \frac{\dinf^{n - k - 1}}{\dinf z^{n - k - 1}} g_1(z) \frac{\dinf^k}{\dinf z^k} g_2(z),
\end{equation}%
where
\begin{align}%
\frac{\dinf^{n - k - 1}}{\dinf z^{n - k - 1}} g_1(z) &= \Bigl( \prod_{i = 0}^{n - k - 2} (nt - i) \Bigr) z^{nt - (n - k - 1)}, \\
\frac{\dinf^k}{\dinf z^k} g_2(z) &= \frac{(n + k - 1)!}{(n - 1)!} (\Lbe - z)^{-(n + k)}.
\end{align}%
Thus, we can conclude that
\begin{equation}%
\frac{\dinf^{n - 1}}{\dinf z^{n - 1}} \frac{z^{n t}}{(\Lbe - z)^n} \bigg\vert_{z = \Lal} = \sum_{k = 0}^{n - 1} \frac{(n + k - 1)!}{k! (n - 1 - k)!} \Bigl( \prod_{i = 0}^{n - k - 2} (nt - i) \Bigr) \frac{\Lal^{nt - (n - k - 1)}}{(\Lbe - \Lal)^{n + k}} > 0.
\end{equation}%
Hence, the power series $g(\sigma)$ has positive power series coefficients.

\noindent (b) We now extend the convergence range of $g(\sigma)$ from a neighbourhood of 0 to the entire unit disc. This is achieved by using the Pringsheim theorem. We are allowed to do so because the coefficients of the power series of $g(\sigma)$ are positive. According to the Pringsheim theorem we can limit attention to the positive real line. With reference to Figure~\ref{fig:app_proof_single_beta_i_graph_f(z)}, we let $\sigma_{\textup{max}} = \max \{ f(z) \mid z \ge \Lal \}$. Note that for $z \in [\Lal,g(\sigma_{\textup{max}}))$ the function $f(z)$ is strictly increasing and is analytic by \eqref{eqn:app_proof_single_beta_i_general_formulation}, thus its inverse, $g(\sigma)$, is analytic for $\sigma \in [f(\Lal),f(g(\sigma_{\textup{max}}))) = [0,\sigma_{\textup{max}})$. Clearly, $\sigma_{\textup{max}} > 1$, hence by the Pringsheim theorem $g(\sigma)$ is analytic for $|\sigma| < \sigma_{\textup{max}}$ and thus inside the unit disc.

\begin{figure}%
\centering%
\includegraphics{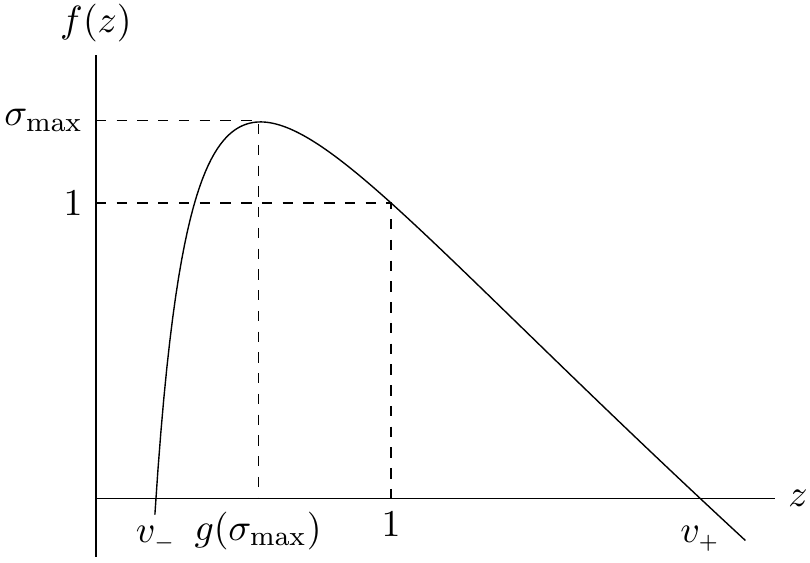}
\caption{The function $\sigma = f(z)$ for $z > 0$ with $\rho = 0.8$ and $s = 2$.}
\label{fig:app_proof_single_beta_i_graph_f(z)}
\end{figure}%

\noindent (c) It is important to see where $g(\sigma_{\textup{max}})$ lies. We find $z = g(\sigma_{\textup{max}})$ from
\begin{align}%
0 &= \frac{\dinf}{\dinf z} \Bigl( \frac{1}{z^t}(\Lbe - z)(z - \Lal) \Bigr) \notag \\
&= \frac{1}{z^{t + 1}} \Bigl( \Lbe \Lal t - (t - 1)(\Lbe + \Lal) z - (2 - t)z^2 \Bigr) \notag \\
&= \frac{1}{z^{t + 1}} \Bigl( \frac{t}{(1 + s)\rho} - (t - 1) \frac{1 + \rho}{\rho} z - (2 - t) z^2 \Bigr). \label{eqn:app_proof_single_beta_i_maximum}
\end{align}%
For $t \in (1,2)$ we take the positive solution of the quadratic equation in \eqref{eqn:app_proof_single_beta_i_maximum} and obtain after some rewriting
\begin{equation}%
z = g(\sigma_{\textup{max}}) = \frac{2}{1 + \rho + \sqrt{(1 + \rho)^2 + 4\rho(s - 1)}} < 1,
\end{equation}%
which also holds for $t = 2$. So, for a $\sigma$ with $|\sigma| < \sigma_{\textup{max}}$, we have from the positivity of the power series coefficients that
\begin{equation}%
|g(\sigma)| \le g(|\sigma|) \le g(\sigma_{\textup{max}}) < 1.
\end{equation}%
By taking $\sigma_i = \unit_i \alpha^{1/s}, ~ i = 1,2,\ldots,s$ with $|\sigma_i| < 1$ we see that the power series $g(\sigma_i)$ converges and $|z| = |g(\sigma_i)| < 1$. Finally, we can conclude that for every $\al$ with $|\al| \in (0,1)$, \eqref{eqn:app_proof_single_beta_i_original} has at least one root $\be_i$ with $|\be_i| < |\al|$ for each $i$.


\section{Proof of Lemma~\ref{lem:convergence_coefficients}(iii)}%
\label{app:proof_convergence_coefficients_horizontal}%

In this appendix we prove the convergence of the ratio of coefficients used in the horizontal compensation step for a solution that satisfies the positive inner equations. Specifically, we consider the setting of Lemma~\ref{lem:compensation_vertical}(i). We wish to establish the limiting values, or an upper bound on the absolute value of the following ratios, as $\al \downarrow 0$,
\begin{enumerate}[label = (\alph*)]%
\item $\hb/\cv$,
\item $\ch_{s + 1}/\cv$,
\item $\ch_i/\cv, ~ i = 1,2,\ldots,s$.
\end{enumerate}%
We reiterate below the equations that determine these ratios, which can be found in the main body of the paper in \eqref{eqn:compensation_horizontal_pos_h}-\eqref{eqn:compensation_horizontal_pos_eta_tilde},
\begin{align}%
\bigl(\A{0,1} + \al \A{-1,1} \bigr) \hb - \al \sum_{i = 1}^s \ch_i \ibpos{\al,\be_i} &= \cv \al \ibpos{\al,\be}, \label{eqn:app_compensation_horizontal_pos_h}\\
\al \B{0,0} \hb + \sum_{i = 1}^s \ch_i \be_i \bigl( \A{1,-1} + \al \A{0,-1} \bigr) \ibpos{\al,\be_i} \notag \\
+ \ch_{s + 1} \be_{s + 1} \bigl( \B{1,1} + \al \B{0,1} \bigr) \ibneg{\al,\be_{s + 1}} &= - \cv \be \bigl( \A{1,-1} + \al \A{0,-1} \bigr) \ibpos{\al,\be}, \label{eqn:app_compensation_horizontal_pos_eta_i}\\
- \ch_{s + 1} \al s + \al s \h{0} + (1 + s)\rho q \h{s-1} &= 0. \label{eqn:app_compensation_horizontal_pos_eta_tilde}
\end{align}%
As it turns out, when we let $\al \downarrow 0$ in the system of linear equations \eqref{eqn:app_compensation_horizontal_pos_h}-\eqref{eqn:app_compensation_horizontal_pos_eta_tilde}, we obtain a degenerate system of equations. By properly scaling the system, one obtains a non-degenerate system of equations.

In this appendix we adopt the following notation to indicate the limiting value of a ratio
\begin{equation}%
x^\textup{lim} \defi \lim_{\al \downarrow 0} \frac{x}{\cv}.
\end{equation}%

\noindent(a) Divide \eqref{eqn:app_compensation_horizontal_pos_eta_tilde} by $\cv$ and rearrange to obtain
\begin{equation}%
\frac{\h{s-1}}{\cv} = \al \frac{s}{q} \frac{1}{(1 + s)\rho} \Bigl( \frac{\ch_{s + 1}}{\cv} - \frac{\h{0}}{\cv} \Bigr). \label{eqn:app_proof_h(s-1)}
\end{equation}%
By letting $\al \downarrow 0$ in \eqref{eqn:app_proof_h(s-1)} we get $h^\textup{lim}(s - 1) = 0$. Next, divide \eqref{eqn:app_compensation_horizontal_pos_eta_i} by $\al$ and $\cv$, which yields
\begin{align}%
\B{0,0} \frac{\hb}{\cv} + \sum_{i = 1}^s \frac{\ch_i}{\cv} \frac{\be_i}{\al} \bigl( \A{1,-1} + \al \A{0,-1} \bigr) \ibpos{\al,\be_i} \notag \\
+ \frac{\ch_{s + 1}}{\cv} \frac{\be_{s + 1}}{\al} \bigl( \B{1,1} + \al \B{0,1} \bigr) \ibneg{\al,\be_{s + 1}} &= - \frac{\be}{\al} \bigl( \A{1,-1} + \al \A{0,-1} \bigr) \ibpos{\al,\be}. \label{eqn:app_proof_eta_i}
\end{align}%
We let $\al \downarrow 0$ and use that $\A{1,-1} = (1 + s)\rho \I$ and $\B{1,1} = (1 + s)\rho \M{0,s - 1}$. This gives
\begin{equation}%
\B{0,0} \hb^\textup{lim} + \Lal (1 + s)\rho \sum_{i = 1}^s \ch_i^\textup{lim} \eb{0} + \ch_{s + 1}^\textup{lim} \Lalt (1 + s)\rho \func{+}{0}^{s - 1} \eb{0} = - \Lbe (1 + s)\rho \eb{0}. \label{eqn:app_proof_eta_i_limit}
\end{equation}%
The matrix $\B{0,0}$ is a tri-diagonal matrix and together with $h^\textup{lim}(s - 1) = 0$ we find that $\hb^\textup{lim} = 0$.

\noindent(b) Together with $\hb^\textup{lim} = 0$, equation \eqref{eqn:app_proof_eta_i_limit} reduces to the single equation
\begin{equation}%
\Lal \sum_{i = 1}^s \ch_i^\textup{lim} + \ch_{s + 1}^\textup{lim} \Lalt \func{+}{0}^{s - 1} = - \Lbe. \label{eqn:app_proof_eta_i_limit_single_equation}
\end{equation}%
Now, divide \eqref{eqn:app_compensation_horizontal_pos_h} by $\al$ and $\cv$, yielding
\begin{equation}%
\A{0,1} \frac{\hb}{\al \cv} + \A{-1,1} \frac{\hb}{\cv} - \sum_{i = 1}^s \ch_i \ibpos{\al,\be_i} = \ibpos{\al,\be}. \label{eqn:app_proof_h}
\end{equation}%
Note that $\A{0,1} \hb/(\al \cv) = (1 + s)\rho (1 - q) \h{s - 1}/(\al \cv) \eb{0}$, so that, together with \eqref{eqn:app_proof_h(s-1)} and $\A{-1,1} = \I$, \eqref{eqn:app_proof_h} reduces to
\begin{equation}%
s \frac{1 - q}{q} \Bigl( \frac{\ch_{s + 1}}{\cv} - \frac{\h{0}}{\cv} \Bigr)\eb{0} + \frac{\hb}{\cv} - \sum_{i = 1}^s \ch_i \ibpos{\al,\be_i} = \ibpos{\al,\be}. \label{eqn:app_proof_h_reduced}
\end{equation}%
Moreover, letting $\al \downarrow 0$ again gives us a single equation, namely
\begin{equation}%
- \sum_{i = 1}^s \ch_i^\textup{lim} + s \frac{1 - q}{q}  \ch_{s + 1}^\textup{lim} = 1. \label{eqn:app_proof_h_limit_single_equation}
\end{equation}%
One can solve the linear system of equations \eqref{eqn:app_proof_eta_i_limit_single_equation} and \eqref{eqn:app_proof_h_limit_single_equation} for the two unknowns $\sum_{i = 1}^s \ch_i^\textup{lim}$ and $\ch_{s + 1}^\textup{lim}$, where the solution of $\ch_{s + 1}^\textup{lim}$ is given in Lemma~\ref{lem:convergence_coefficients}(iii)(b).

\noindent(c) We wish to establish a scaling of each equation in \eqref{eqn:app_compensation_horizontal_pos_h}-\eqref{eqn:app_compensation_horizontal_pos_eta_tilde} so that, in the limit for $\al \downarrow 0$, we obtain a non-degenerate system of equations. From \eqref{eqn:inner_pos_eigenvector} and \eqref{eqn:determinant_inner_pos_s-th_root} we know that $\ipos{\al,\be_i,r} = \unit_i \be_i^{r/s}, r = 0,1,\ldots,s - 1$ and from Remark~\ref{rem:identical_eigenvectors} it is clear that there exists a $j$ for which $\ipos{\al,\be,r} = \unit_j \be^{r/s}, ~ r = 0,1,\ldots,s-1$. Thus, the correct scaling to establish a non-degenerate limit of a term $\ipos{\al,\be,r}$ is $\al^{-r/s}$.

We are going to multiply both \eqref{eqn:app_proof_eta_i} and \eqref{eqn:app_proof_h} element-wise by the column vector
\begin{equation}%
\bm{\al} \defi \begin{pmatrix} 1 & \al^{-1/s} & \al^{-2/s} & \cdots & \al^{-(s - 1)/s} \end{pmatrix}^T.
\end{equation}%
To be able to do that, we need some new notation. Let us introduce the column vectors
\begin{equation}%
\bm{\ch} \defi \begin{pmatrix} \ch_1 & \ch_2 & \cdots & \ch_s \end{pmatrix}^T, \quad \text{and} \quad
\tilde{\hb} \defi \begin{pmatrix} \frac{\h{0}}{\al^{1/s}} & \frac{\h{1}}{\al^{2/s}} & \cdots & \frac{\h{s - 1}}{\al} \end{pmatrix}^T,
\end{equation}%
and the matrices
\begin{align}%
W(\al) &\defi \begin{pmatrix}%
\ibpos{\al,\be_1} & \ibpos{\al,\be_2} & \cdots & \ibpos{\al,\be_s}
\end{pmatrix}, \\
\tilde{W}(\al) &\defi \begin{pmatrix}%
\frac{\be_1}{\al} \ibpos{\al,\be_1} & \frac{\be_2}{\al} \ibpos{\al,\be_2} & \cdots & \frac{\be_s}{\al} \ibpos{\al,\be_s}
\end{pmatrix}.
\end{align}%
Using the introduced vectors and matrices we can write
\begin{equation}%
\sum_{i = 1}^s \frac{\ch_i}{\cv} \ibpos{\al,\be_i} = W(\al) \frac{\bm{\ch}}{\cv}, \quad \text{and} \quad
\sum_{i = 1}^s \frac{\ch_i}{\cv} \frac{\be_i}{\al}  \ibpos{\al,\be_i} = \tilde{W}(\al) \frac{\bm{\ch}}{\cv}.
\end{equation}%
We introduce the symbol $\circ$ for element-wise (Hadamard) multiplication, i.e., for column vectors $\bld{x}$ and $\bld{y}$, we have that $\bld{z} = \bld{x} \circ \bld{y}$ means that element $z(r) = x(r) y(r)$. Below we list some useful element-wise multiplications that we will use
\begin{align}%
\bm{\al} \circ  W(\al) &= \begin{pmatrix}%
1 & 1 & \cdots & 1 \\
\bigl( \frac{\be_1}{\al} \bigr)^{1/s} \unit_1 & \bigl( \frac{\be_2}{\al} \bigr)^{1/s} \unit_2 & \cdots & \bigl( \frac{\be_s}{\al} \bigr)^{1/s} \unit_s \\
\vdots & \vdots & & \vdots \\
\bigl( \frac{\be_1}{\al} \bigr)^{(s - 1)/s} \unit_1^{s - 1} & \bigl( \frac{\be_2}{\al} \bigr)^{(s - 1)/s} \unit_2^{s - 1} & \cdots & \bigl( \frac{\be_s}{\al} \bigr)^{(s - 1)/s} \unit_s^{s - 1}
\end{pmatrix}, \\
\exists j : \bm{\al} \circ \ibpos{\al,\be} &= \begin{pmatrix}%
1 &
\bigl( \frac{\be}{\al} \bigr)^{1/s} \unit_j &
\cdots &
\bigl( \frac{\be}{\al} \bigr)^{(s - 1)/s} \unit_j^{s - 1}
\end{pmatrix}^T, \\
\bm{\al} \circ \hb &= \al^{1/s} \tilde{\hb}.
\end{align}%
Moreover, for $\al \downarrow 0$, we determine that $\bm{\al} \circ  W(\al) \to W$, $\bm{\al} \circ  \tilde{W}(\al) \to \Lal W$, and $\bm{\al} \circ \ibpos{\al,\be} \to \bld{w}_j$, where $W$ and $\bld{w}_j$ are given in \eqref{eqn:convergence_horizontal_definition_W}-\eqref{eqn:convergence_horizontal_definition_w_j}.

We are now in a position to multiply both \eqref{eqn:app_proof_eta_i} and \eqref{eqn:app_proof_h} element-wise by $\bm{\al}$. For the element-wise multiplication of \eqref{eqn:app_proof_eta_i} we get
\begin{align}%
&\bm{\al} \circ \B{0,0} \frac{\hb}{\cv} + \bm{\al} \circ \sum_{i = 1}^s \frac{\ch_i}{\cv} \frac{\be_i}{\al} \bigl( \A{1,-1} + \al \A{0,-1} \bigr) \ibpos{\al,\be_i} + \frac{\ch_{s + 1}}{\cv} \frac{\be_{s + 1}}{\al} \bm{\al} \circ \bigl( \B{1,1} + \al \B{0,1} \bigr) \ibneg{\al,\be_{s + 1}} \notag \\
 &= - \frac{\be}{\al} \bm{\al} \circ \bigl( \A{1,-1} + \al \A{0,-1} \bigr) \ibpos{\al,\be}. \label{eqn:app_proof_eta_i_element-wise_multiplication}
\end{align}%
Combining \eqref{eqn:app_proof_eta_i_element-wise_multiplication} with the definition of the transition matrices $\B{0,0} = (1 + s)\rho \Lo - (1 + s)(\rho + 1)\I + s\Lo^T$, $\A{1,-1} = (1 + s)\rho \I$ and $\B{1,1} = (1 + s)\rho \M{0,s-1}$, and the matrix-vector notation described above, we obtain
\begin{align}%
&\Bigl( (1 + s)\rho \Lo - \al^{1/s} (1 + s)(\rho + 1) \I + \al^{2/s} s \Lo^T \Bigr) \frac{\tilde{\hb}}{\cv} + (1 + s)\rho \bm{\al} \circ \tilde{W}(\al) \frac{\bm{\ch}}{\cv} \notag \\
&+ \bm{\al} \circ \sum_{i = 1}^s \frac{\ch_i}{\cv} \frac{\be_i}{\al} \al \A{0,-1} \ibpos{\al,\be_i} + \frac{\ch_{s + 1}}{\cv} \frac{\be_{s + 1}}{\al} \bigl( (1 + s)\rho \M{0,s - 1} + \al \bm{\al} \circ \B{0,1} \bigr) \ibneg{\al,\be_{s + 1}} \notag \\
&= - \frac{\be}{\al} \bm{\al} \circ \bigl( (1 + s)\rho \I + \al \A{0,-1} \bigr) \ibpos{\al,\be}. \label{eqn:app_proof_eta_i_element-wise_multiplication_rewritten}
\end{align}%
Multiplying \eqref{eqn:app_proof_h} element-wise by $\bm{\al}$ gives
\begin{align}%
\Bigl( (1 + s) \rho (1 - q) \M{0,s - 1} + \al^{1/s} \I \Bigr) \frac{\tilde{\hb}}{\cv} - \bm{\al} \circ W(\al) \frac{\bm{\ch}}{\cv} = \bm{\al} \circ \ibpos{\al,\be}. \label{eqn:app_proof_h_element-wise_multiplication}
\end{align}%
Finally, if we let $\al \downarrow 0$ in \eqref{eqn:app_proof_eta_i_element-wise_multiplication_rewritten} and \eqref{eqn:app_proof_h_element-wise_multiplication} we obtain the limiting system of linear equations
\begin{align}%
L \tilde{\hb}^\textup{lim} + \Lal W \bm{\ch}^\textup{lim} &= - \Lbe \bld{w}_j - \Lchtpos \Lalt \func{+}{0}^{s - 1}\eb{0}, \label{eqn:app_proof_non-degenerate_1}\\
(1 + s)\rho(1 - q) \M{0,s-1} \tilde{\hb}^\textup{lim} - V \bm{\ch}^\textup{lim} &= \bld{w}_j. \label{eqn:app_proof_non-degenerate_2}
\end{align}%
We have constructed $2s$ equations, \eqref{eqn:app_proof_non-degenerate_1}-\eqref{eqn:app_proof_non-degenerate_2}, for the $2s$ unknowns $\tilde{\hb}^\textup{lim}$ and $\bm{\ch}^\textup{lim}$. The solution to this system of equations depends on $j$. So, let $\tilde{\hb}^\textup{lim}_j$ and $\bm{\ch}^\textup{lim}_j$ be the solution to \eqref{eqn:app_proof_non-degenerate_1}-\eqref{eqn:app_proof_non-degenerate_2} for a specific $j$. Then,
\begin{equation}%
\lim_{\al \downarrow 0} \Bigl| \frac{\ch_i}{\cv} \Bigr| \le \max_{1 \le j,r + 1 \le s} |\ch^\textup{lim}_j(r)|.
\end{equation}%
This concludes the proof of part (c).


\bibliographystyle{plain}%
\bibliography{ShortestExpectedDelayRoutingBibliography}%

\end{document}%